\documentclass[reqno]{amsart}
\usepackage{amssymb}
\usepackage{amsmath}
\usepackage{amsthm}
\usepackage[usenames]{color}
\usepackage{graphicx}
\usepackage{cite}
\usepackage{bbm}
\allowdisplaybreaks[4]

\usepackage[hidelinks]{hyperref}
\usepackage[margin=1.1in]{geometry} 
\usepackage{marginnote}

\usepackage{color}

\newtheorem{thm}{Theorem}[section]

\newtheorem{lem}[thm]{Lemma}
\newtheorem{prop}{Proposition}[section]
\newtheorem{rem}{Remark}[section]

\numberwithin{equation}{section}

\newcommand{\de}{\delta}

\newcommand{\vertiii}[1]{{\left\vert\kern-0.25ex\left\vert\kern-0.25ex\left\vert #1
		\right\vert\kern-0.25ex\right\vert\kern-0.25ex\right\vert}}

\makeatletter
\@namedef{subjclassname@2020}{%
	\textup{2020} Mathematics Subject Classification}
\makeatother

\begin{document}

\title[The compressible Euler-VFP system] 
{Low Mach number limit of the compressible Euler--Vlasov--Fokker--Planck system in the whole space}

\author[F. Li]{Fucai Li}
\address[F. Li]{School of Mathematics, Nanjing University, 
Nanjing 210093, P. R. China}
\email{fli@nju.edu.cn}

\author[J. Ni]{Jinkai Ni$^*$}  \thanks{$^*$\! Corresponding author}
\address[JKN]{School  of Mathematics, Nanjing University, Nanjing 
 210093, P. R. China}
\email{jinkaini123@gmail.com}

\author[Z. Zhang]{Zhipeng Zhang}   
\address[Z. Zhang]{School of Mathematical Sciences, 
Ocean University of China, Qingdao 266100, P. R. China}
\email{zhangzp@ouc.edu.cn}

\begin{abstract}
Although there are many important contributions on compressible and incompressible fluid-particle interaction models respectively, 
how to connect the two-type fluid-particle models via the low Mach number limit remains a challenging open problem. 
In this paper, we resolve it for the compressible isentropic fluid-particle  model (Euler--Vlasov--Fokker--Planck (Euler--VFP) system) in the whole space $\mathbb{R}^3$. 
First, we establish the  global-in-time {\it a priori estimates} of strong solutions that are uniform with respect to the Mach number $\varepsilon$ near the global Maxwellian. The proof relies on a refined energy method that combines the relaxation structure 
$b^\varepsilon-u^\varepsilon$ induced by the fluid-particle interaction and the symmetrized acoustic structure of the compressible Euler part in the model.
Under the assumption of well-prepared initial data, we derive a {\it global-in-time} uniform error estimate in the $H^2$ framework between the solution of the compressible Euler--VFP system and that of the limiting incompressible Euler--VFP system. A key point is to introduce the corrected acoustic variable
 $ q^\varepsilon-\varepsilon [P'(1)]^{-1}\pi,$
 which captures the pressure corrector in the low Mach number limit. This also allows us to exploit the exact cancellation of the singular acoustic terms and to close the {\it global-in-time} error estimate. The damping term $b^\varepsilon-u^\varepsilon$, which is absent in the pure Euler equations, plays an essential role in recovering the relative velocity dissipation and in controlling the coupled fluid-particle dynamics.
As a consequence, we prove the low Mach number limit of the compressible Euler--VFP  system with the convergence rate $\mathcal O(\varepsilon)$ in the time-continuous $H^2$ topology. In particular, the density fluctuation vanishes, while the velocity and the distribution function converge strongly to the corresponding solutions of the incompressible Euler--VFP system.

\end{abstract}

\date{\today}
	
\subjclass[2020]{35Q84, 35Q35, 35B25}

\keywords{Compressible isentropic Euler--Vlasov--Fokker--Planck system; Incompressible  Euler--Vlasov--Fokker--Planck system; Low Mach number limit; Fluid-particle interaction;
 Well-prepared initial data; Acoustic waves.}
\maketitle
\thispagestyle{empty}

\section{Introduction}

\subsection{Our model}
In this paper, we investigate the low Mach number limit of the compressible isentropic Euler--Vlasov--Fokker--Planck (Euler--VFP) system in the whole space $\mathbb{R}^3$. More precisely, 
let $F^\varepsilon=F^\varepsilon(t,x,v)\geq 0$ denote the particle distribution function at time \(t\geq 0\) and position \(x\in\mathbb{R}^3\) with velocity \(v\in\mathbb{R}^3\), and let
\(\rho^\varepsilon=\rho^\varepsilon(t,x)>0\) and \(u^\varepsilon = u^\varepsilon(t,x)\) denote the fluid density and velocity, respectively. 
The scaled compressible Euler--VFP system is given by (cf. \cite{LLNZ-2026-preprint,CG-CPDE-2006}):
\begin{equation}\label{I1}
\left\{
\begin{aligned}
&\partial_t\rho^\varepsilon
+\nabla_x\cdot(\rho^\varepsilon u^\varepsilon)=0,
\\ 
&\partial_t(\rho^\varepsilon u^\varepsilon)
+\nabla_x\cdot(\rho^\varepsilon u^\varepsilon\otimes u^\varepsilon)
+\frac1{\varepsilon^2}\nabla_xP(\rho^\varepsilon)
=
\int_{\mathbb R^3}(v-u^\varepsilon)F^\varepsilon {\rm d}v,\\
&\partial_t F^\varepsilon+v \cdot\nabla_x F^\varepsilon
=
\nabla_v\cdot\big[(v-u^\varepsilon)F^\varepsilon
+\nabla_v F^\varepsilon\big],
\end{aligned}
\right.
\end{equation}
where $\varepsilon\in(0,1)$ is the (scaled) Mach number, and the pressure function $P(\rho^\varepsilon)$ is given by $\gamma-$law, i.e.,
$$P(\rho^\varepsilon)=(\rho^\varepsilon)^{\gamma}\quad\text{with} \quad \gamma>1.$$
The pressure term \(\nabla_xP(\rho^\varepsilon)/\varepsilon^{2}\) becomes singular as \(\varepsilon\to0\), and it is exactly this term that enforces the incompressibility constraint in the limit.
It is necessary to distinguish between the low Mach number limit and  
 the small Deborah number limit to  fluid-particle flows. In the small Deborah number limit,  the particle transport and the Fokker--Planck relaxation are singularly scaled, and the limiting particle equation becomes a Smoluchowski or Kramer--Smoluchowski type equation (cf. \cite{FKW-2026,FQW-2024-M3AS}); while for the low Mach number limit, the particle equation in \eqref{I1} is maintained at its original kinetic scale. That is, the transport term \(v\cdot\nabla_xF^\varepsilon\), the interaction friction force term \((v - u^\varepsilon)F^\varepsilon\), and the particle velocity diffusion all remain of order one. 
Hence, the singular perturbation is only generated by the acoustic pressure \(\nabla_xP(\rho^\varepsilon)/\varepsilon^{2}\) in the fluid momentum equation \eqref{I1}$_2$, 
and the limiting system remains an incompressible Euler--VFP system with the kinetic particle dynamics preserved.

\subsection{Previous literature}
Before beginning our analysis on the system \eqref{I1}, we first review some results on incompressible/compressible  fluid-particle interaction models. 
Fluid-particle models date back to the pioneering contributions of O'Rourke \cite {O} and Williams \cite{W2}, and  arise in various industrial applications
including, for example,   spray dynamics\cite{BBB-05}, 
medical biosprays\cite{BBJ-05}, 
diesel engine operation \cite{R-52-1,R-52-2} 
and combustion theory \cite{W1,W2}.  
Mathmeatically,  the fluid-particle models usually consist  of an Vlasov or Vlasov--Fokker--Planck (VFP) equation for the particles phase
and  Euler/Navier--Stokes (NS) equations for the fluid phase. Now  there are 
substantial results on the  global existence and large-time behavior of solutions to  fluid-particle models.
We briefly mention a few of them. When the fluid part in  fluid-particle models is   homogeneous and incompressible,  
the global existence of weak solutions to the Stokes--Vlasov equations in a bounded domain was established by Hamdache \cite{Hamdache-1998}. 
Boudin et al. \cite{Boudin-2009} obtained global weak solutions of the NS--Vlasov system in 
\(\mathbb{T}^3\). 
Subsequently, this result was extended to bounded domain case  by Yu \cite{Yu-2013}, and   to time-dependent bounded domain case by Boudin, Grandmont and Moussa \cite{Boudin-2017}.  
Han-Kwan,   Moussa    and  Moyano  \cite{HMM-arma-2020} and  Danchin \cite{da} obtained   Fujita-Kato solutions   to  the Vlasov-Navier-Stokes system in   $\mathbb{T}^3$ and   $\mathbb{R}^3$ respectively. 
More recently, the uniqueness of weak solutions to the NS--Vlasov system in \(\mathbb{T}^2\) or \(\mathbb{R}^2\) was obtained in \cite{HMMM-RMI-2020}.
When taking into account the Brownian effect of the particles 
(i.e., \(\Delta_v F^\varepsilon\)), 
Goudon et al. \cite{Goudon-2010} proved the global existence of classical solutions  for the NS--VFP system with small initial data in \(\mathbb{T}^3\), 
while Carrillo, Duan and Moussa \cite{CDM-KRM-2011} studied the corresponding inviscid case. 
Later, Chae, Kang and Lee \cite{CKL-JDE-2011} established the global existence of weak solutions to the NS--VFP system in \(\mathbb{R}^d\) (\(d=2,3\)), as well as the existence and uniqueness of smooth solutions 
in \(\mathbb{R}^2\). 
When the fluid part is   inhomogeneous and incompressible, Wang and Yu \cite{WY-JDE-2015} constructed global weak solutions to the NS--Vlasov system with density-dependent drag force in a bounded domain, and Choi and Kwon \cite{CK-Nonlinearity-2015} studied the global existence and large-time behavior of strong solutions. Jiang, Li and Ni \cite{JLN-2025} investigated the global well-posedness and large-time behavior of solutions to the NS--VFP system. 
Li et al. \cite{LNSW-2025} investigated the global well-posedness, optimal decay rates, and inviscid limit to the NS--VFP system with density-dependent drag force and hence improved the results in \cite{CDM-KRM-2011}.

Moreover, 
extensive investigations have also been carried out on the compressible fluid-particle models. Mellet and Vasseur \cite{MV-MMMAS-2007} proved the global existence of weak solutions to the compressible NS--VFP system in a bounded domain with Dirichlet boundary conditions for the fluid velocity, and Dirichlet or reflection boundary conditions for the distribution function. Duan and Liu \cite{DL-KRM-2013} constructed the global existence and time decay of classical solutions 
to the compressible Euler--VFP  system in $\mathbb{R}^3$. 
In \cite{LMW}, Li, Mu and Wang established the global well-posedness of classical solutions within the $H^4$ framework under the assumption that the initial data is a small perturbation of the global Maxwellian, and obtained that the solutions decay at a rate of $(1+t)^{-1/2}$ in the $L^\infty$-norm. 
However, the decay behavior in the $L^2$-norm remains unaddressed in \cite{LMW}. 
Recently, Li, Ni and Wu \cite{LNW-ArXiv-2025} improved those results in \cite{LMW} by reducing the required regularity of the initial data from $H^4$ to $H^2$, and derived  optimal time-decay rates in both the $L^2$-norm and the $L^p$-norm with $2\le p \le6$.
Li, Ni and Wang \cite{LNW-2026} investigated the  global well-posedness and inviscid limit of the compressible NS--VFP system with density-dependent friction force in $\mathbb{R}^3$ and $\mathbb{T}^3$ respectively and hence improved the results in \cite{DL-KRM-2013}.
 Li, Liu and Yang \cite{LLY} gave the exponential convergence rate of the strong solutions to the compressible 
NS--VFP system with specular reflection boundary condition or the incompressible NS--VFP system with Maxwell boundary condition in a bounded domain, respectively. Li, Wang and Wang \cite{LWW} investigated the wave phenomena of the multi-dimensional compressible Euler/NS--VFP system.
Interested reader can also refer to \cite{MW,LNW-ArXiv-2024,BBBGLLM-esaim-2009} on recent progresses  on  the  compressible non-isentropic fluid-particle models.


Another significant topic in fluid-particle models is singular limit theory, and in particular, the hydrodynamic limit has drawn considerable attention recently, with a series of important progresses having been made.
Carrillo and Goudon \cite{CG-CPDE-2006} first studied the compressible Euler--VFP system, and formally derived hydrodynamic limits in the “bubbling” and “flowing” regimes under distinct scalings. Some rigorous justifications were achieved by Mellet and Vasseur \cite{MV08} and Ballew \cite{B-ZAMP-2020}, respectively, both of which employed the relative entropy approach. Similar strategy was also employed by Choi and Jung  to study the compressible NS--VFP system with a density-dependent viscosity \cite{CJ-2020}. 
The hydrodynamic limit results of the VFP equation coupled with the incompressible NS equations were established by Goudon, Jabin and Vasseur in
\cite{GJV-2004-2-IUMJ,GJV-IUMJ-2004}. 
Similar results have been extended to the inhomogeneous incompressible NS--VFP system by Su and Yao \cite{SY}. 
Recently, Han-Kwan and Michel \cite{HM-MAMS-2024} proved two classes of hydrodynamic limits for the incompressible NS--Vlasov system in $\mathbb{T}^3$, corresponding to the light particles and fine particle regimes.
Su et al.\cite{SWYZ-JDE-2023} extended some results of 
\cite{HM-MAMS-2024} to the inhomogeneous incompressible 
NS--Vlasov system.

In addition, the  low Mach number limit is one of the classical singular limits in fluid mechanics.
 For the  compressible isentropic Euler equations, extensive results have been established. 
Ebin \cite{E-Ann-1977} first studied the low Mach number limit for the isentropic compressible Euler equations with well-prepared initial data. 
 Later, Klainerman and Majda \cite{KM-CPAM-1981,KM-CPAM-1982} generalized this to symmetric hyperbolic systems. 
For general initial data case, by the scattering property of acoustic wave in the whole space, Ukai \cite{Ukai-JMKU-1986} and Asano \cite{Asano-87} proved the low Mach number limit of the compressible isentropic Euler equations in $\mathbb{R}^3$. Subsequently, Schochet \cite{Schochet-JDE-1994} and Joly, M\'etivier and Rauch \cite{JMR-ASENS-1995} investigated the limit in periodic domains by developing filtering methods. 
  Iguchi \cite{Ig-97}, Isozaki \cite{Is-87}, and Secchi \cite{se-2000}  investigated the incompressible limit to the compressible Euler equations  in   exterior domain,  $R^n_+$ and bounded domain, respectively.  
The low Mach number limit for the non-isentropic compressible  Euler equations is more challenging, primarily for two reasons. First, the entropy variable breaks the symmetry structure established in \cite{KM-CPAM-1981, KM-CPAM-1982}. Second, the coefficients of the associated acoustic wave equations depend on the entropy variable, which leads to intricate resonance phenomena. Consequently, only a few results are available, see \cite{Alazard-ADE-2005,Schochet-CMP-1986,MS-ARMA-2001}. Recently, Li et al. \cite{LNZZ-2026-arxiv} studied the low Mach number
limit for the compressible Navier--Stokes--Euler system in $\mathbb{R}^3$.


Similar to the low Mach number limit problem to  fluid equations which connects the compressible and incompressible 
fluid equations, it is natural and  of great significance to investigate the low Mach number limit of  compressible fluid-particle models to construct the relations between the compressible and incompressible fluid-particle models.
However, until to now, there are no results on  low Mach number limit to compressible fluid-particle models. The goal of this paper is to treat this issue, i.e.,  we shall  explore the low Mach number limit of the compressible Euler--VFP system \eqref{I1}.

\subsection{Main results} 

As mentioned before, our aim of this paper is to study the low Mach number limit of the compressible Euler--VFP system \eqref{I1} in $\mathbb{R}^3$. We first point out that although the Euler component alone in \eqref{I1}  is prone to shock formation, the relaxation mode $b^\varepsilon-u^\varepsilon$ (in the system   \eqref{A1} below) can suppresses this for small perturbations \cite{DL-KRM-2013,LNW-2026}. This observation motivates us 
to construct the low Mach number limit of the compressible Euler--VFP system \eqref{I1} in the framework of  small global solutions.   

We supplement the system \eqref{I1} with the following initial data
\begin{align}\label{int}
(\rho^\varepsilon,u^\varepsilon,F^\varepsilon)|_{t = 0}=(\rho_0^\varepsilon(x),u_0^\varepsilon(x),F^\varepsilon_0(x,v))\to (\bar{\rho},0,M(v))\quad\text{as}\quad|x|\to\infty.
\end{align}
Here, $M(v)$ is the global Maxwellian 
$(2\pi)^{-\frac{3}{2}}e^{-\frac{|v|^2}{2}}$,
and \(\bar\rho>0\) denotes a prescribed constant background density. Without loss of generality, we set $\bar\rho = 1$ throughout this paper.
By introducing the transformations
\begin{align*}
  \rho^\varepsilon=1+\varepsilon q^\varepsilon\quad \text{and}\quad F^\varepsilon=M+\sqrt{M}f^\varepsilon,   
\end{align*}
the  compressible Euler--VFP system \eqref{I1} with the initial data \eqref{int} can be rewritten as
\begin{equation}\label{A1}
\left\{
\begin{aligned}
&\partial_t  q^\varepsilon+u^\varepsilon\cdot\nabla_{x}q^\varepsilon+\frac{1+\varepsilon q^\varepsilon}{\varepsilon}\nabla_{x} \cdot u^{\varepsilon}=0,               \\
&\partial_t u^\varepsilon+u^\varepsilon\cdot\nabla_{x}u^\varepsilon+\frac{1}{\varepsilon} \frac{P^{\prime}(1+\varepsilon q^\varepsilon)}{1+\varepsilon q^\varepsilon} \nabla_{x}q^\varepsilon=\frac{b^\varepsilon-u^\varepsilon-a^\varepsilon u^\varepsilon}{1+\varepsilon q^\varepsilon},
\\ 
& \partial_t f^\varepsilon+v\cdot\nabla_{x} f^\varepsilon+ u^\varepsilon\cdot\nabla_{v}f^\varepsilon-\frac{1}{2}u^\varepsilon\cdot v f^\varepsilon-u^\varepsilon\cdot v\sqrt{M}=\mathcal{L}f^\varepsilon,\\
& (q^\varepsilon,u^\varepsilon,f^\varepsilon)|_{t=0}= (q_0^\varepsilon,u_0^\varepsilon,f_0^\varepsilon)=\Big(\frac{\rho_0^\varepsilon-1}{\varepsilon},
u_0^\varepsilon\frac{F_0^\varepsilon-M}{\sqrt{M}}\Big).
\end{aligned}
\right.
\end{equation}
Here $\mathcal{L}$  is the linearized Fokker--Planck operator defined by
\begin{align*}
\mathcal{L} f^\varepsilon=\frac{1}{\sqrt{M}}    \nabla_{v}\cdot\Big( M\nabla_{v} \Big(  \frac{f^\varepsilon}{\sqrt{M}} \Big)  \Big)=\Delta_{v}f^\varepsilon-\frac{|v|^2}{4}f^\varepsilon+\frac{3}{2}f^\varepsilon,
\end{align*}
and $a^\varepsilon,b^\varepsilon$  are the moments of $f^\varepsilon$ defined by
\begin{align*}
a^\varepsilon(t,x)
:=
\int_{\mathbb R^3}\sqrt M f^\varepsilon(t,x,v) {\rm d}v
\quad \text{and} \quad
b^\varepsilon(t,x)
:=
\int_{\mathbb R^3}v\sqrt M f^\varepsilon(t,x,v){\rm d}v.    
\end{align*}
Then, formally passing to the limit in \eqref{A1}$_1$ as $\varepsilon\rightarrow 0$, we obtain the incompressibility constraint
\begin{align*}
{\rm div}_x  u=0.    
\end{align*}
Moreover, the singular pressure term generates the limiting pressure gradient \(\nabla_x\pi\). Hence, the pair \((u,f)\) formally satisfies the following incompressible Euler--VFP system (cf.\cite{CDM-KRM-2011}):
\begin{equation}\label{A2}
\left\{
\begin{aligned}
&\partial_t u +u\cdot \nabla_{x} u+\nabla_{x}\pi=b-u-au,\\
& {\rm div}_x  u=0,
\\ 
& \partial_t f +v\cdot\nabla_{x} f+ u \cdot\nabla_{v}f -\frac{1}{2}u \cdot v f -u \cdot v\sqrt{M}=\mathcal{L} f,
\end{aligned}
\right.
\end{equation}
with the initial data
\begin{align}\label{A2-2}
(u,f)|_{t=0}=(u_0(x),f_0(x,v)) \rightarrow(0,0) ,\quad\text{as}\quad x\rightarrow+\infty,  
\end{align}
where 
\begin{align*}
a=\int_{\mathbb R^3} \sqrt{M}f{\rm d}v\quad \text{and} \quad b=    \int_{\mathbb R^3} v\sqrt{M}f{\rm d}v.
\end{align*}

Now, we are in a position to state our main result.

\begin{thm}[Uniform-in-$\varepsilon$ global estimate]\label{Th1}
Let $0<\varepsilon< 1$. Assume that the initial data $(q_0^\varepsilon, u_0^\varepsilon,f_0^\varepsilon )$ satisfy $\rho_0^\varepsilon=1+\varepsilon q_0^\varepsilon>0$, $F_0^\varepsilon=M+\sqrt{M}f^\varepsilon_0\geq 0$,  $(q^\varepsilon_0,u^\varepsilon_0)\in H^3$ and $f_0^\varepsilon\in L_v^2(H ^3)$, and there exists a constant $\varepsilon_0>0$ independent of $\varepsilon$ such that 
\begin{align}\label{TG1}
 \mathcal{X}_{ \varepsilon,0}:= \|(q_0^\varepsilon, u_0^\varepsilon)\|_{H^3}^2+  \|f_0^\varepsilon\|_{L_v^2(H^3)}^2\leq \varepsilon_0,
\end{align}
then the Cauchy problem of the compressible Euler--VFP system 
\eqref{A1}  admits a unique global strong solution
$(q^\varepsilon,u^\varepsilon,f^\varepsilon)$ satisfying 
$\rho^\varepsilon=1+\varepsilon q^\varepsilon>0 $, $F^\varepsilon=M+\sqrt{M}f^\varepsilon \geq 0$, and
\begin{align}\label{TG2}
&\sup_{t\geq 0} \big(  \|(q^\varepsilon,u^\varepsilon)(t)\|_{H^3}^2+ \|f^\varepsilon(t)\|_{L_v^2(H^3)} ^2 \big) + \int_0^\infty \sum_{|\alpha|\leq 3} \|\{\mathbf{I}-\mathbf{P}\}\partial^\alpha f^\varepsilon(\tau)\|_{\nu}^2 \,  {\rm d}\tau \nonumber\\
&\quad + \int_0^\infty \big( \|(b^\varepsilon-u^\varepsilon)(\tau)\|^2_{H^3}+\|\nabla_{x}(a^\varepsilon,b^\varepsilon,q^\varepsilon)(\tau)\|_{H^2}^2       \big)  \,   {\rm d}\tau\leq C_0 \mathcal{X}_{\varepsilon,0},
\end{align}
where $C_0 > 0$ is a constant independent of $\varepsilon$, time and initial data.

\end{thm}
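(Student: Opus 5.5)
The plan is the standard combination of local existence and a uniform-in-$\varepsilon$ a priori estimate, closed by a continuity argument. Local existence and uniqueness of strong solutions to \eqref{A1} with $(q,u)\in C([0,T];H^3)$ and $f\in C([0,T];L^2_v(H^3))$ follow for fixed $\varepsilon$ from symmetric hyperbolic theory for the fluid part, coupled with linear kinetic Fokker--Planck theory by iteration. Positivity of $F^\varepsilon$ follows from the maximum principle for the Fokker--Planck equation, and $\rho^\varepsilon>0$ follows from transport along characteristics.

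The core is the a priori estimate. Set
$$\mathcal E(t)=\|(q^\varepsilon,u^\varepsilon)\|_{H^3}^2+\|f^\varepsilon\|_{L^2_v(H^3)}^2$$
and let $\mathcal D(t)$ be the dissipation appearing in \eqref{TG2}. Under the bootstrap assumption $\sup_{[0,T]}\mathcal E\le\delta$ with $\delta$ small and independent of $\varepsilon$, I aim to prove
$$\frac{d}{dt}\widetilde{\mathcal E}+\lambda\mathcal D\le C(\mathcal E^{1/2}+\mathcal E)\mathcal D,$$
where $\widetilde{\mathcal E}\sim\mathcal E$ uniformly in $\varepsilon$.

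\textbf{Step 1: zeroth-order and $\partial^\alpha$ energy estimates, $|\alpha|\le 3$.} I will symmetrize the fluid part by multiplying the $q$-equation by $P'(1+\varepsilon q)/(1+\varepsilon q)^2$ times $\partial^\alpha q$. Then the singular terms $\varepsilon^{-1}\nabla\cdot u$ and $\varepsilon^{-1}\nabla q$ cancel exactly after integration by parts. The commutators of the singular terms with the variable coefficients are of the form $\varepsilon^{-1}\cdot\varepsilon\nabla q\cdot(\ldots)$, which are $O(1)$ and can be absorbed. Adding the $f$-estimate, the coupling terms combine: $-u\cdot v\sqrt M$ paired against $f$ gives $-u\cdot b$, and the drag term gives $(b-u)\cdot u$. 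Together with $\langle -\mathcal L f,f\rangle\ge \lambda\|\{\mathbf I-\mathbf P\}f\|_\nu^2+|b|^2$, this produces the relative-velocity dissipation $\|b-u\|^2$, with $\mathbf P$ the projection onto $\sqrt M$ and $v\sqrt M$. The nonlinear terms $u\cdot\nabla_v f$, $u\cdot vf$, $au$ and $u\cdot\nabla u$ are bounded by $\mathcal E^{1/2}\mathcal D$ using $H^2\hookrightarrow L^\infty$. A subtlety is that $\|u\|_{L^2}$ itself is not dissipated, only $b-u$ and $\nabla$ quantities are; so terms must be split, for instance writing $au$ with $u=(u-b)+b$.

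\textbf{Step 2: macroscopic dissipation.} This step recovers $\|\nabla(a,b,q)\|_{H^2}$. For $a,b$, I will use the moment/fluid-type equations obtained by testing the $f$-equation against $\sqrt M$, $v\sqrt M$ and $v_iv_j\sqrt M$, via the standard Duan-type interactive functionals $\langle\nabla a,b\rangle$ and $\langle \nabla b,\Gamma(\{\mathbf I-\mathbf P\}f)\rangle$. For $q$, I will test the $u$-equation with $\varepsilon$-weighted terms, namely $\varepsilon\langle \partial^\alpha u,\nabla\partial^\alpha q\rangle$ for $|\alpha|\le2$. Its time derivative yields $P'(1)\|\nabla\partial^\alpha q\|^2$ from the singular pressure term; the factor $\varepsilon$ exactly compensates the $1/\varepsilon$. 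The remaining terms are the $\varepsilon\cdot\varepsilon^{-1}\|\nabla\cdot\partial^\alpha u\|^2$ piece and the contribution from $\partial_t u$ with $b-u$.

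\textbf{Step 3: closing.} Taking a suitable small linear combination gives $\widetilde{\mathcal E}\sim\mathcal E$ uniformly in $\varepsilon$, since the interactive term carries a factor $\varepsilon<1$. Integrating in time and continuity in $T$ give \eqref{TG2} with $C_0$ independent of $\varepsilon$.

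The main obstacle is Step 2 for $q$: the term $\|\nabla\cdot\partial^\alpha u\|^2$ it produces is not controlled directly, since $u$ has no own dissipation. I would handle it by writing $\nabla\cdot u=\nabla\cdot(u-b)+\nabla\cdot b$ and controlling each piece by $\|b-u\|_{H^3}$ and $\|\nabla b\|_{H^2}$. The catch is that $\nabla b$ dissipation in turn requires $\nabla a$ and the relaxation, so the weights of the functionals must be ordered carefully. The $\varepsilon$-weight already makes this term harmless, at $\varepsilon\|\nabla\cdot u\|^2/\varepsilon=\|\nabla\cdot u\|^2$, only if it is absorbed by $\|b-u\|_{H^3}^2+\|\nabla b\|^2_{H^2}$ with a small coefficient in front of the $q$-functional.
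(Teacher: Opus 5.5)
Your proposal is correct and follows essentially the same route as the paper. The paper also uses symmetrized energy estimates up to order three (weight $P'(1+\varepsilon q^\varepsilon)/(1+\varepsilon q^\varepsilon)^2$ on $\partial^\alpha q^\varepsilon$) to get the relaxation dissipation $\|b^\varepsilon-u^\varepsilon\|_{H^3}^2$, the standard interactive functional for $\nabla(a^\varepsilon,b^\varepsilon)$, and the $\varepsilon$-weighted functional $\varepsilon\sum_{|\alpha|\le 2}\int\partial^\alpha u^\varepsilon\cdot\nabla\partial^\alpha q^\varepsilon\,\mathrm{d}x$ for $\nabla q^\varepsilon$, absorbing the resulting $\|\nabla u^\varepsilon\|_{H^2}^2$ via $\|\nabla u^\varepsilon\|_{H^2}\lesssim\|b^\varepsilon-u^\varepsilon\|_{H^3}+\|\nabla b^\varepsilon\|_{H^2}$ with ordered small weights ($\tau_2\ll\tau_1$) and closing by local existence plus a continuity argument.
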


In \cite{CDM-KRM-2011}, Carrillo, Duan and Moussa established the following result regarding the global well-posedness of the limit system \eqref{A2}:
\begin{prop}
[Global well-posedness of the limit system] \label{prop2}
Let $\nabla_{x} \cdot u_0(x)=0$ and $F_0=M+\sqrt{M}f_0\geq 0$ hold. Assume that the initial data $(u_0 ,f_0 )$ satisfy  
$u_0\in H^3$ and $f_0 \in L_v^2(H ^3)$, and 
there exists a constant $\varepsilon_1>0$ such that
\begin{align}\label{TGG1}
\mathcal{X}_0:=\|u_0\|_{H^3}^2+\|f_0\|_{L_v^2(H^3)}^2\leq \varepsilon_1,     
\end{align}
then the Cauchy problem of the incompressible Euler--VFP system 
\eqref{A2}--\eqref{A2-2} admits a unique global strong solution
$( u,f)$ satisfying 
$F =M+\sqrt{M}f \geq 0$, 
\begin{align}\label{TGG2}
&\sup_{t\geq 0}  \big( \| u(t)\|_{H^3}^2+ \|f (t)\|_{L_v^2(H^3)}^2\big)   + \int_0^\infty \sum_{|\alpha|\leq 3} \|\{\mathbf{I}-\mathbf{P}\}\partial^\alpha f (\tau)\|_{\nu}^2   \,{\rm d}\tau \nonumber\\
&\quad + \int_0^\infty \big( \|(b -u)(\tau)\|^2_{H^3}+\|\nabla_{x}(a ,b  )(\tau)\|_{H^2}^2       \big)  \,   {\rm d}\tau\leq C_1 \mathcal{X}_{ 0},
\end{align}
where $C_1 > 0$ is a constant independent of time and initial data.
Moreover, for any given $\delta>0$ which is close to zero, if $\|u_0\|_{ H^3\cap L^1}^2+\|f_0\|_{L_v^2(H^3\cap L^1)}^2\lesssim \varepsilon_1$, the solution $(u,f)$ enjoys the following time-decay:
\begin{align}\label{TGG3}
\|u(t)\|_{H^3}+\|f(t)\|_{L_v^2(H^3)}   \leq C_{\sigma} (1+t)^{-\frac{3}{4}+\sigma} \big(  \|u_0\|_{H^3\cap L^1} +\|f_0\|_{L_v^2(H^3\cap L^1)}   \big   ),
\end{align}
for any $t\geq 0$, where $C_{\sigma}$ depends only on $\sigma$ and may blow up as $\sigma$ tends to zero.
\end{prop}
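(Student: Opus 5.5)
We would prove Proposition~\ref{prop2} by the standard combination of local existence, uniform \emph{a priori} estimates, and continuity, followed by a linearized spectral analysis for the decay. Local well-posedness in $H^3\times L^2_v(H^3)$ follows from a Friedrichs-type iteration. We regularize the incompressible Euler part by Leray projection, so that $\pi$ is recovered from $-\Delta\pi=\nabla\cdot\nabla\cdot(u\otimes u)-\nabla\cdot(b-au)$. We treat the kinetic equation as a linear Fokker--Planck equation with the given drift $u$. Nonnegativity of $F=M+\sqrt M f$ follows from the maximum principle for the original equation $\partial_tF+v\cdot\nabla_xF=\nabla_v\cdot[(v-u)F+\nabla_vF]$. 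It then suffices to prove that, as long as
\[
\mathcal{E}(t):=\|u(t)\|_{H^3}^2+\|f(t)\|_{L^2_v(H^3)}^2
\]
is small, one has $\mathcal{E}(t)+\int_0^t\mathcal{D}\,{\rm d}\tau\le C\mathcal{E}(0)$. Here $\mathcal{D}$ is the dissipation appearing in \eqref{TGG2}.

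\textbf{Energy estimate.} For $|\alpha|\le3$ we apply $\partial^\alpha$ to \eqref{A2}, test the $u$-equation with $\partial^\alpha u$ and the $f$-equation with $\partial^\alpha f$, and add. The pressure term drops by ${\rm div}_x u=0$. The linear coupling cancels:
\[
-\langle \partial^\alpha u\cdot v\sqrt M,\partial^\alpha f\rangle=-\partial^\alpha u\cdot\partial^\alpha b .
\]
Together with $\langle\mathcal L f,f\rangle=-\|\{\mathbf I-\mathbf P\}f\|_\nu^2-|b|^2$ (coercivity of $\mathcal L$ on the complement of $\ker\mathcal L={\rm span}\{\sqrt M\}$, with the $b$-part split off), this gives
\[
\frac12\frac{d}{dt}\big(\|\partial^\alpha u\|^2+\|\partial^\alpha f\|^2\big)+\|\{\mathbf I-\mathbf P\}\partial^\alpha f\|_\nu^2+\|\partial^\alpha(b-u)\|^2\le N_\alpha .
\]
The nonlinear remainder $N_\alpha$ contains $u\cdot\nabla u$, $au$, $u\cdot\nabla_v f$ and $\frac12u\cdot vf$. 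The term with $u\cdot\nabla_v f$ vanishes at top order by integration by parts in $v$. The commutators are bounded by $\sqrt{\mathcal E}\,\mathcal D$ through Moser estimates and the embedding $H^2\hookrightarrow L^\infty$.

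\textbf{Macroscopic dissipation.} The estimate above lacks $\|\nabla_x(a,b)\|_{H^2}^2$ and also any dissipation of $u$ itself. To recover it we use the moment (fluid-type) system:
\[
\partial_t a+\nabla\cdot b=0,\qquad \partial_t b+\nabla a+\nabla\cdot\Gamma(\{\mathbf I-\mathbf P\}f)=u-b+\text{nonlinear}.
\]
We test it with the interaction functional $\sum_{|\alpha|\le2}\langle\partial^\alpha b,\nabla\partial^\alpha a\rangle$ and with a similar functional built from the second moments for $\nabla b$. The $b$-equation yields $\|\nabla a\|_{H^2}^2$, with errors controlled by the microscopic dissipation and by $\|b-u\|_{H^3}$. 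The symmetric part of $\nabla b$ comes from the stress moment equation. Adding a small multiple $\kappa$ of these functionals to the energy gives an equivalent Lyapunov functional and the inequality
\[
\frac{d}{dt}\mathcal E_\kappa+c\,\mathcal D\le C\sqrt{\mathcal E}\,\mathcal D .
\]
Closing this requires that nonlinear terms involving $u$ without derivatives be controlled. We handle them by writing $u=b-(b-u)$, so that $\|u\|_{L^\infty}\lesssim\|\nabla u\|_{H^1}$ is in turn bounded through $\nabla b$ and $b-u$. A continuity argument then gives \eqref{TGG2}.

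\textbf{Decay and main obstacle.} For \eqref{TGG3} we analyze the linearized operator of $(u,f)$ in Fourier variables, with $u$ projected on divergence-free fields. The mode $b-u$ is damped and $a$ diffuses, which gives the heat-like bound $\|e^{tB}U_0\|\lesssim(1+t)^{-3/4}\|U_0\|_{L^1\cap L^2}$ at low frequencies and exponential decay at high frequencies. Duhamel's formula combined with the energy inequality then produces the $(1+t)^{-3/4+\sigma}$ rate. The loss $\sigma$ reflects the quadratic nonlinearities $au$ and $u\cdot\nabla u$, whose $L^1$ norms decay only like $(1+t)^{-3/2}$ and make the time convolution borderline. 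The main obstacle is the macroscopic dissipation step. The Euler part has no viscosity, so every bit of dissipation of $u$ must be transferred from $f$ through the relaxation $b-u$ and the $a$--$b$ hyperbolic-parabolic coupling. Making the interaction functionals compatible with the top-order ($|\alpha|=3$) nonlinear terms is delicate. This is precisely the analysis of \cite{CDM-KRM-2011}, which we may invoke directly.
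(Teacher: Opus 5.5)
Your proposal matches the paper, which gives no proof of this proposition and simply imports it from \cite{CDM-KRM-2011}; your outline (energy estimate producing $\|b-u\|_{H^3}^2$ and the microscopic dissipation, Kawashima-type interaction functionals for $\nabla_x(a,b)$, and recovery of $\nabla_x u$ via $\|\nabla_x u\|_{H^2}\lesssim\|b-u\|_{H^3}+\|\nabla_x b\|_{H^2}$) is the same machinery the paper runs in Section~3 for the compressible system. If you wrote it out yourself, two corrections are needed: at zero order the cubic term $-\int a|u|^2\,{\rm d}x$ is not bounded by $\sqrt{\mathcal E}\,\mathcal D$ on its own, because $\mathcal D$ has no zero-order part and the splitting via $\|u\|_{L^\infty}\lesssim\|\nabla_x u\|_{H^1}$ gives only one dissipative factor, so you must first cancel it against $\frac12\int u\cdot\langle v\mathbf P f,\mathbf P f\rangle\,{\rm d}x=\int a\,u\cdot b\,{\rm d}x$, which leaves the harmless $\int a\,u\cdot(b-u)\,{\rm d}x$; and $\int_0^t(1+t-s)^{-3/4}(1+s)^{-3/2}\,{\rm d}s\lesssim(1+t)^{-3/4}$ is not borderline, so your explanation of the $\sigma$-loss is wrong, though this does not affect the result since you defer to \cite{CDM-KRM-2011}.
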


With help of the estimate \eqref{TGG2}, we are able to establish the estimates of $\nabla_{x}\pi$ and $\nabla D_{t}\pi$, where $D_{t} :=\partial_{t}+u\cdot\nabla_{x}$,  which is of great significance in the subsequent proof of error estimate.

\begin{prop}[Estimate of the pressure function]\label{prop3}
Under the assumptions in Proposition \ref{prop2},
it holds that
\begin{align}\label{TGGG1}
\sup_{t\geq 0} \|\nabla_{x }\pi(t)\|_{H^2}\lesssim \varepsilon_1^\frac{1}{2},  
\end{align}
\begin{align}\label{TGGG2}
 \int_{0}^\infty \big(\|\nabla_{x}\pi(\tau)\|_{H^2}^2 
 +\|\nabla_{x}D_{t}\pi(\tau)\|_{H^1}^2  
\big)\,{\rm d}\tau  \lesssim \varepsilon_1,
\end{align}
\begin{align}\label{TGGG3}
 \int_{0}^\infty  \|D_{t}\pi(\tau)\|_{ \dot H^{-1}}^2  \,{\rm d}\tau  \lesssim \varepsilon_1,   
\end{align}
and
\begin{align}\label{TGGG4}
 \sup_{t\geq 0}  \| \pi(t)\|_{ L^2}   \lesssim \varepsilon_1^\frac{1}{2},   
\end{align}
where $\varepsilon_1$ is given by \eqref{TGG1}.
\end{prop}

\begin{rem}
The $L^1$-norm condition in Theorem \ref{prop2} is necessary because we need the decay estimate \eqref{TGG3} to ensure that \eqref{TGGG3} holds.
\end{rem}

\begin{rem}\label{Rem1.2}
The estimates
\eqref{TGGG1} and \eqref{TGGG4} indicate that $\pi\in C(\mathbb R^+;H^3)$, which is necessary in the subsequent proof of Theorem \ref{Th5}.
\end{rem}

\begin{rem}
The derivation of \eqref{TGG3} essentially used the coupling between the particle distribution and the fluid motion, which appears through the relaxation term $b-u$. This term yields a damping effect on the relative velocity and hence introduces a dissipative mechanism into the incompressible Euler--VFP system \eqref{A2}, see \eqref{G4.9}--\eqref{G4.11}. This feature is fundamentally different from the pure incompressible Euler equations, where no such particle-induced relaxation structure is available. 
\end{rem}

\begin{thm}[Global error estimate]\label{Th4}
Let $(q^\varepsilon,u^\varepsilon,f^\varepsilon)$  be the global solution to the Cauchy
problem of the scaled compressible Euler--VFP system \eqref{A1} 
given by Theorem \ref{Th1}, and $( u, \pi,f)$ be the corresponding global solution to the Cauchy problem of the
incompressible Euler--VFP system \eqref{A2} with the initial data $(u_0, f_0)$ stated in Proposition \ref{prop2}.
Assume further that
\begin{align}\label{TD1}
\eta_{\varepsilon}:= \|u^\varepsilon_0-u_0\|_{H^2}+   \|q^\varepsilon_0- \varepsilon [{P^{\prime}(1)}]^{-1} \pi_0\|_{H^2}+\|f_0^\varepsilon-f_0\|_{L_v^2(H^2)}\leq \varepsilon.
\end{align}
Then, there exists a constant $C_2> 0$ independent of time such that
\begin{align}\label{TD2}
&\sup_{t\geq 0}  \big( \|(u^\varepsilon-u)(t)\|_{H^2}^2+   \big\|\big(q^\varepsilon- \varepsilon [{P^{\prime}(1)}]^{-1} \pi\big)(t)\big\|_{H^2}^2+\|(f^\varepsilon-f)(t)\|_{L_v^2(H^2)}^2\big)   \nonumber\\
&\quad+\int_0^\infty \Big(\|\nabla_{x}(a^\varepsilon-a ,b^\varepsilon-b  )(\tau)\|_{H^1}^2  +\sum_{|\alpha|\leq 2} \big\|\{\mathbf{I}-\mathbf{P}\}\partial^\alpha (f^\varepsilon-f) (\tau)\big\|_{\nu}^2        \Big)  \,   {\rm d}\tau  \nonumber\\
&\quad\quad+\int_0^\infty \big( \big\|\big((b^\varepsilon -u^\varepsilon)-(b-u)\big)(\tau)\big\|^2_{H^2}+ \big\|\nabla_{x}\big(q^\varepsilon- \varepsilon [{P^{\prime}(1)}]^{-1} \pi\big)(\tau)\big\|_{H^1}^2 \big) \,   {\rm d}\tau\leq C_2 \varepsilon^2.
\end{align}
In \eqref{TD1}, $\pi_0$ is defined by
$\nabla\pi_0=\mathbb Q( -u_0\cdot\nabla_{x} u_0+b_0-u_0-a_0u_0)$ with  $\mathbb Qw:=-\nabla_{x}(-\Delta_{x})^{-1}{\rm div}_{x}  w$.   
\end{thm}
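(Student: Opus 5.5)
We will prove Theorem~\ref{Th4} with an $H^2$ energy method for the error system, using the corrected acoustic variable. Set
\[
\sigma^\varepsilon:=q^\varepsilon-\varepsilon[P'(1)]^{-1}\pi,\qquad w^\varepsilon:=u^\varepsilon-u,\qquad g^\varepsilon:=f^\varepsilon-f .
\]
Subtracting \eqref{A2} from \eqref{A1} and using ${\rm div}_x u=0$, the equation for $\sigma^\varepsilon$ reads
\[
\partial_t\sigma^\varepsilon+u^\varepsilon\cdot\nabla_x\sigma^\varepsilon+\frac{1+\varepsilon q^\varepsilon}{\varepsilon}{\rm div}_x w^\varepsilon=-\varepsilon[P'(1)]^{-1}\big(D_t\pi+w^\varepsilon\cdot\nabla_x\pi\big).
\]
For the velocity error we write $\frac{P'(1+\varepsilon q^\varepsilon)}{1+\varepsilon q^\varepsilon}=P'(1)+\varepsilon h(\varepsilon q^\varepsilon)q^\varepsilon$ with $h$ smooth. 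Then
\[
\frac1\varepsilon\frac{P'(1+\varepsilon q^\varepsilon)}{1+\varepsilon q^\varepsilon}\nabla_x q^\varepsilon-\nabla_x\pi
=\frac{P'(1)}{\varepsilon}\nabla_x\sigma^\varepsilon+h\,q^\varepsilon\nabla_x q^\varepsilon .
\]
This step is where the corrector matters: the $O(1)$ pressure $\nabla_x\pi$ cancels exactly. The remaining term $h\,q^\varepsilon\nabla_x q^\varepsilon$ is quadratic. It is written as $h\,q^\varepsilon\nabla_x\sigma^\varepsilon+O(\varepsilon)\,q^\varepsilon\nabla_x\pi$, so it is either controlled by the error or is $O(\varepsilon)$ times an integrable-in-time quantity.

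The drag term $\frac{b^\varepsilon-u^\varepsilon-a^\varepsilon u^\varepsilon}{1+\varepsilon q^\varepsilon}-(b-u-au)$ splits into $(b^\varepsilon-b)-w^\varepsilon$ plus quadratic errors plus $\varepsilon q^\varepsilon\times$(bounded). The kinetic error $g^\varepsilon$ satisfies the linearized VFP equation with source $w^\varepsilon\cdot v\sqrt M$ and nonlinear differences.

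The estimates proceed in the following order.

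\textbf{Step 1: symmetrized energy.} For $|\alpha|\le2$, apply $\partial^\alpha$ and take the $L^2$ inner product with weights $P'(1)\partial^\alpha\sigma^\varepsilon/(1+\varepsilon q^\varepsilon)$ and $\partial^\alpha w^\varepsilon$. This is the standard Klainerman--Majda symmetrizer, adjusted so that the singular $\varepsilon^{-1}$ terms cancel after integration by parts. The remaining commutators involve $\nabla_x(\varepsilon q^\varepsilon)/\varepsilon=\nabla_x q^\varepsilon$, which Theorem~\ref{Th1} bounds uniformly. In parallel, $\partial^\alpha$ of the $g^\varepsilon$ equation is tested against $\partial^\alpha g^\varepsilon$. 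Adding the two, the coupling terms $\langle w^\varepsilon,b^\varepsilon-b\rangle$ and $\langle w^\varepsilon\cdot v\sqrt M,g^\varepsilon\rangle$ combine into the dissipation
\[
\|(b^\varepsilon-u^\varepsilon)-(b-u)\|_{H^2}^2+\sum_{|\alpha|\le2}\|\{\mathbf I-\mathbf P\}\partial^\alpha g^\varepsilon\|_\nu^2,
\]
exactly as in the proof of Theorem~\ref{Th1}.

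\textbf{Step 2: macroscopic dissipation.} To recover $\|\nabla_x(a^\varepsilon-a,b^\varepsilon-b)\|_{H^1}^2$ and $\|\nabla_x\sigma^\varepsilon\|_{H^1}^2$ in $L^2_t$, we build an interactive functional from the fluid-type moment equations for $(a,b)$ and from $\langle\partial^\alpha w^\varepsilon,\varepsilon\nabla_x\partial^\alpha\sigma^\varepsilon\rangle$-type correctors. For the latter, the momentum equation gives $\nabla_x\sigma^\varepsilon\approx\frac{\varepsilon}{P'(1)}(-\partial_t w^\varepsilon+\cdots)$, and $w^\varepsilon$ can be traded against $(b^\varepsilon-u^\varepsilon)-(b-u)$ and $b^\varepsilon-b$. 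We only need $\nabla_x\sigma^\varepsilon$ up to $H^1$, which leaves one derivative of room.

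\textbf{Step 3: closing.} The total error functional $\mathcal E(t)\sim\|w^\varepsilon\|_{H^2}^2+\|\sigma^\varepsilon\|_{H^2}^2+\|g^\varepsilon\|_{L^2_v(H^2)}^2$ satisfies
\[
\frac{d}{dt}\mathcal E+\mathcal D\lesssim \big(\sqrt{\varepsilon_0}+\sqrt{\varepsilon_1}\big)\mathcal D+\Lambda(t)\,\mathcal E+\varepsilon^2 R(t).
\]
Here $\Lambda$ collects $\|\nabla_x(u^\varepsilon,u,q^\varepsilon)\|_{H^2}$, $\|b-u\|_{H^3}$, $\|\nabla_x(a,b)\|_{H^2}$ and similar terms. $R$ collects $\|\nabla_x\pi\|_{H^2}^2$, $\|\nabla_x D_t\pi\|_{H^1}^2$, $\|D_t\pi\|_{\dot H^{-1}}^2$ and the dissipation norms of the two solutions, so Proposition~\ref{prop3}, Theorem~\ref{Th1} and Proposition~\ref{prop2} give $\int_0^\infty R\,{\rm d}t<\infty$.

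\textbf{Main obstacle.} The difficulty is that $\Lambda$ is not obviously in $L^1_t$: Theorem~\ref{Th1} and \eqref{TGG2} only give $L^2_t$ control of gradients, and $w^\varepsilon$ itself carries no dissipation. Two devices handle this.
\begin{itemize}
\item First, every undifferentiated $w^\varepsilon$ is split as $w^\varepsilon=(b^\varepsilon-b)-[(b^\varepsilon-u^\varepsilon)-(b-u)]$. The term $b^\varepsilon-b$ is in turn controlled in a form $\Lambda\,\|\cdot\|\,\|\cdot\|$ paired with dissipated quantities, so that Young's inequality yields $\Lambda^2\mathcal E$ with $\Lambda\in L^2_t$.
\item Second, the low-frequency part is handled by the decay \eqref{TGG3}, $(1+t)^{-3/4+\sigma}$. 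Its square is integrable, which makes $\|u\|_{H^3}^2$-type coefficients $L^1_t$; this is where the $L^1$ assumption of Proposition~\ref{prop2} enters.
\end{itemize}
The source term involving $D_t\pi$ is paired with $\sigma^\varepsilon$ via $\dot H^{-1}$–$\dot H^{1}$ duality and bounded by $\varepsilon\|D_t\pi\|_{\dot H^{-1}}\|\nabla_x\sigma^\varepsilon\|$, which is absorbed using \eqref{TGGG3}.

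Grönwall's inequality with $\int_0^\infty\Lambda<\infty$, together with $\mathcal E(0)\lesssim\eta_\varepsilon^2\le\varepsilon^2$ from \eqref{TD1}, then yields \eqref{TD2}.
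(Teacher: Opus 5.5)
\textbf{Verdict: there is a genuine gap.} Most of your outline matches the paper: the corrected variable, a symmetrized $H^2$ energy, a moment-equation functional for $\nabla_x(\delta a,\delta b)$, the $\varepsilon\langle\partial^\alpha w^\varepsilon,\nabla_x\partial^\alpha\sigma^\varepsilon\rangle$ corrector, and $\dot H^{-1}$--$\dot H^1$ duality for $D_t\pi$. Closing by Gr\"onwall instead of the paper's direct absorption is harmless.

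\textbf{The gap.} Your two ``Main obstacle'' devices do not control the zero-order nonlinear fluid--kinetic terms whose coefficient comes from the \emph{compressible} solution. Testing the velocity error equation with $w^\varepsilon$ and the kinetic error equation with $g^\varepsilon$ produces
\[
-\int_{\mathbb R^3}(a^\varepsilon-a)\,u^\varepsilon\cdot w^\varepsilon\,{\rm d}x
\qquad\text{and}\qquad
\frac12\int_{\mathbb R^3}u^\varepsilon\cdot\langle v\,g^\varepsilon,g^\varepsilon\rangle\,{\rm d}x .
\]
The macroscopic part of the second term is $\int(a^\varepsilon-a)\,u^\varepsilon\cdot(b^\varepsilon-b)\,{\rm d}x$.

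Theorem~\ref{Th1} gives no decay for $u^\varepsilon$, only $\sup_t\|u^\varepsilon\|_{H^3}$ and $\nabla_xu^\varepsilon\in L^2_tH^2$. Moreover, $a^\varepsilon-a$, $b^\varepsilon-b$ and $w^\varepsilon$ are dissipated only through their gradients. A trilinear integral of factors controlled only in $L^\infty_tL^2\cap L^2_t\dot H^1$ carries at most $3/2$ powers of dissipation after H\"older--Sobolev, so it is not integrable uniformly in time. Concretely, you end up with a Gr\"onwall coefficient $\|u^\varepsilon\|_{L^3}^2$ or $\|u^\varepsilon\|_{L^\infty}$, and neither is known to lie in $L^1_t$.

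Neither device removes the problem:
\begin{itemize}
\item Your split $w^\varepsilon=(b^\varepsilon-b)-\delta(b-u)$ just reproduces the bad term $\int(a^\varepsilon-a)u^\varepsilon\cdot(b^\varepsilon-b)$.
\item Writing $u^\varepsilon=u+w^\varepsilon$ to use \eqref{TGG3} leaves cubic error terms such as $\int(a^\varepsilon-a)|w^\varepsilon|^2$, which have the same defect.
\end{itemize}

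\textbf{The missing idea.} Add the two terms before estimating. The $(a^\varepsilon-a)u^\varepsilon\cdot(b^\varepsilon-b)$ pieces cancel against each other, leaving
\[
\int_{\mathbb R^3}(a^\varepsilon-a)\,u^\varepsilon\cdot\delta(b-u)\,{\rm d}x
+(\text{terms containing }\{\mathbf I-\mathbf P\}g^\varepsilon),
\qquad \delta(b-u):=(b^\varepsilon-u^\varepsilon)-(b-u).
\]
This is bounded by $\|u^\varepsilon\|_{L^3}\|\nabla_x(a^\varepsilon-a)\|_{L^2}\|\delta(b-u)\|_{L^2}$ plus similar terms, each with two dissipated factors. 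Likewise, $-\int a|w^\varepsilon|^2+\int w^\varepsilon\cdot\langle\frac12vf-\nabla_vf,g^\varepsilon\rangle$ reduces to $\int a\,w^\varepsilon\cdot\delta(b-u)$ plus microscopic terms; for these, your decay device would also work. This is the identity the paper uses in Lemma~\ref{L5.1} (the two facts stated after \eqref{G5.6}). With it, every zero-order term has two $L^2_t$ factors, and no decay of the limit solution is needed beyond \eqref{TGGG3}. Your Step 1 invokes Theorem~\ref{Th1} only for the linear coupling, but it is this nonlinear counterpart that is indispensable.

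\textbf{A secondary inaccuracy.} The weight $P'(1)/(1+\varepsilon q^\varepsilon)$ is not the Klainerman--Majda symmetrizer of the variable-coefficient system. As a result, the remainder $h\,q^\varepsilon\nabla_xq^\varepsilon$ that you move to the right-hand side is top order. For $|\alpha|=2$, $\partial^\alpha(hq^\varepsilon\nabla_x\sigma^\varepsilon)$ contains $\nabla_x^3\sigma^\varepsilon$, which the $H^2$ error does not control, so your claim that this term is ``controlled by the error'' is false as stated.

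It can be rescued, though not by the error alone: use
\[
\|hq^\varepsilon\nabla_xq^\varepsilon\|_{H^2}\lesssim\|q^\varepsilon\|_{H^2}\|\nabla_xq^\varepsilon\|_{H^2},
\]
together with $\|q^\varepsilon\|_{H^2}\lesssim\|\sigma^\varepsilon\|_{H^2}+\varepsilon$ (by Proposition~\ref{prop3}) and $\nabla_xq^\varepsilon\in L^2_tH^2$ (by Theorem~\ref{Th1}). The paper's weight $P'(1+\varepsilon q^\varepsilon)/(1+\varepsilon q^\varepsilon)^2$ avoids the issue: the top-order terms cancel up to $\varepsilon^{-1}\nabla_x\big(P'(1+\varepsilon q^\varepsilon)/(1+\varepsilon q^\varepsilon)\big)=O(\nabla_xq^\varepsilon)$.
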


\begin{rem}
The corrected acoustic variable 
\begin{align*}q^\varepsilon-\varepsilon [P'(1)]^{-1}\pi    \end{align*} 
is introduced to eliminate the leading-order incompressible pressure contribution from the scaled density fluctuation.
This selection is in line with the well-prepared initial data and is crucial for the cancellation of the singular acoustic terms of order \(\varepsilon^{-1}\) in error estimates. It is exactly this correction that enables us to finalize the global-in-time \(H^2\) error estimate and achieve the convergence rate \(\mathcal O(\varepsilon)\).   
\end{rem}

\begin{rem}
The assumption \eqref{TD1} shows that the triple $(q_0^\varepsilon,u_0^\varepsilon,f_0^\varepsilon)$ is  well-prepared to order $\mathcal{O}(\varepsilon)$ in the $H^2$ framework because $\eta_{\varepsilon}$ approaches $0$ at the rate of $\mathcal{O}(\varepsilon)$ as $\varepsilon$ tends to $0$.
\end{rem}

\begin{rem}
Since ${\rm div}_x  u_0 = 0$, the inequality \eqref{TD1} also implies that $\mathbb{Q}u_0^\varepsilon=\mathbb{Q}(u_0^\varepsilon - u_0)\to 0$ in the $H^2$-norm. Therefore, the initial acoustic part, i.e., the density oscillation $q_0^\varepsilon-\varepsilon [P^\prime(1)]^{-1}\pi_0$, and the gradient part of the velocity $\mathbb{Q}u_0^\varepsilon$ vanish in the limit.
\end{rem}

\begin{rem}
In general, for ill-prepared initial data, the above strong convergence is no longer expected. Since the compressible solution may carry fast acoustic oscillations with a frequency of order $\mathcal O(\varepsilon^{-1})$, such oscillations cannot be removed by the uniform energy estimate alone. Therefore, one cannot expect
\begin{align*}
(q^\varepsilon,u^\varepsilon)\rightarrow&\,(0,u)\quad\text{strongly in }\quad C(\mathbb R^+;H^2),\\
f^\varepsilon \rightarrow&\, f\quad\quad\,\,\,\,\text{strongly in }\quad C(\mathbb R^+;L_v^2(H^2)).
\end{align*}
Instead, one may obtain weak-$*$ convergence of $u^\varepsilon$, strong convergence of the incompressible component $\mathbb P u^\varepsilon$ in suitable local spaces, or strong convergence after filtering out the fast acoustic waves. 
\end{rem} 

Based on the global error estimate presented in Theorem \ref{Th4}, we now provide the convergence analysis and identification for the limit of the compressible Euler-VFP system \eqref{A1}. We state the result as follows.

\begin{thm}[Low Mach number limit of the compressible Euler--VFP system]\label{Th5}
Let \(( q^\varepsilon,u^\varepsilon,f^\varepsilon)\) be the global solution to the
scaled compressible Euler--VFP system \eqref{A1}, and 
\((u,\pi,f)\) be the global small solution to the incompressible Euler--VFP system \eqref{A2}.
Assume that the assumptions of Theorem \ref{Th4} hold. Then, as \(\varepsilon\rightarrow 0\), we have
\begin{equation}\label{convergence}
\left\{
\begin{aligned}
(q^\varepsilon,\mathbb P u^\varepsilon,a^\varepsilon,b^\varepsilon) &\rightarrow (0,u,a,b) \,\,    \quad\text{strongly ~ ~~in ~~} C_{}(\mathbb{R}^{+};H^2),\\
\rho^\varepsilon-1&\rightarrow  0\quad
\quad \quad\quad\quad \text{strongly in ~~} C_{}(\mathbb{R}^{+};H^2)\cap L_{}^\infty(\mathbb R^+;L^\infty ), \\
\mathbb Qu^\varepsilon &\rightarrow 0 \quad \quad \quad\quad\quad\text{strongly in ~~} C_{}(\mathbb{R}^{+};H^2),\\
f^\varepsilon &\rightarrow f\quad \quad \quad  \quad\quad  \text{strongly in ~~}C_{}(\mathbb{R}^{+};L^2_v(H^2)). 
\end{aligned}
\right.
\end{equation}
In addition, the limit $(f,u,\pi)$ solves the incompressible Euler--VFP system \eqref{A2}--\eqref{A2-2} in the sense of distributions.
\end{thm}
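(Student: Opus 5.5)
The plan is to read Theorem \ref{Th5} off from the error estimate \eqref{TD2} of Theorem \ref{Th4}, together with the uniform bounds of Theorem \ref{Th1}, Proposition \ref{prop2} and Proposition \ref{prop3}. No new energy estimates should be needed; the work is bookkeeping plus a distributional passage to the limit.

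\textbf{Step 1: the density fluctuation.} Write
\[
q^\varepsilon=\big(q^\varepsilon-\varepsilon[P'(1)]^{-1}\pi\big)+\varepsilon[P'(1)]^{-1}\pi .
\]
By \eqref{TD2} the first term is $O(\varepsilon)$ in $L^\infty(\mathbb R^+;H^2)$. By \eqref{TGGG1}, \eqref{TGGG4} and Remark \ref{Rem1.2}, $\pi$ is bounded in $L^\infty(\mathbb R^+;H^3)$, so the second term is also $O(\varepsilon)$. Hence $\sup_t\|q^\varepsilon\|_{H^2}\lesssim\varepsilon$. For continuity in time, $q^\varepsilon\in C(\mathbb R^+;H^2)$ follows from Theorem \ref{Th1} (strong solutions are continuous in $H^3$, hence in $H^2$). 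This gives convergence in $C(\mathbb R^+;H^2)$, understood as uniform-in-time convergence. Next, $\rho^\varepsilon-1=\varepsilon q^\varepsilon$ is $O(\varepsilon^2)$ in $L^\infty H^2$. The Sobolev embedding $H^2\hookrightarrow L^\infty$ in $\mathbb R^3$ then gives convergence in $L^\infty(\mathbb R^+;L^\infty)$.

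\textbf{Step 2: the velocity.} Since ${\rm div}_x u=0$, we have $\mathbb Pu=u$ and $\mathbb Qu=0$. The projections $\mathbb P$ and $\mathbb Q$ are Fourier multipliers of norm at most one on $H^2$. Therefore
\[
\|\mathbb Pu^\varepsilon-u\|_{H^2}+\|\mathbb Qu^\varepsilon\|_{H^2}\le 2\|u^\varepsilon-u\|_{H^2}\lesssim\varepsilon
\]
uniformly in $t$, by \eqref{TD2}.

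\textbf{Step 3: the distribution function and its moments.} The bound $\|f^\varepsilon-f\|_{L^2_v(H^2)}\lesssim\varepsilon$ is contained in \eqref{TD2}. For the moments, Cauchy--Schwarz in $v$ gives
\[
\|\partial^\alpha(a^\varepsilon-a,b^\varepsilon-b)\|_{L^2_x}\lesssim\|\partial^\alpha(f^\varepsilon-f)\|_{L^2_{x,v}},
\]
because $\sqrt M$ and $v\sqrt M$ lie in $L^2_v$. This yields the moment convergence in $C(\mathbb R^+;H^2)$ at rate $O(\varepsilon)$.

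\textbf{Step 4: the limit equations.} That $(u,\pi,f)$ solves \eqref{A2}--\eqref{A2-2} in the sense of distributions is immediate, since it is the strong solution of Proposition \ref{prop2}. To present it as a genuine identification of the limit, I would pass to the limit in the weak formulation of \eqref{A1}.
\begin{itemize}
\item For the nonlinear terms $u^\varepsilon\cdot\nabla u^\varepsilon$, $a^\varepsilon u^\varepsilon$, $u^\varepsilon\cdot\nabla_v f^\varepsilon$ and $u^\varepsilon\cdot v f^\varepsilon$, I would use the strong convergences above together with the uniform $H^3$ bounds of \eqref{TG2}.
\item For the mass equation, multiplying \eqref{A1}$_1$ by $\varepsilon$ shows that ${\rm div}_x u^\varepsilon\to0$ in distributions, which recovers ${\rm div}_x u=0$.
\item For the momentum equation, I would test \eqref{A1}$_2$ against divergence-free test functions. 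This removes the singular term $\varepsilon^{-1}\frac{P'(1+\varepsilon q^\varepsilon)}{1+\varepsilon q^\varepsilon}\nabla q^\varepsilon$ up to an $O(1)$ remainder. To see this, expand $\frac{P'(1+\varepsilon q)}{1+\varepsilon q}\nabla q=\nabla h(\varepsilon q)/\varepsilon$ with enthalpy $h$: it is an exact gradient, so it vanishes against divergence-free fields. The pressure $\pi$ is then recovered by de Rham's theorem.
\end{itemize}

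The main obstacle is this singular pressure term. The exact-gradient structure handles it cleanly, and the remaining work is routine.
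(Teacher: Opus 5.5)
Your proposal is correct and follows essentially the paper's route. Steps 1--3 match the paper's use of \eqref{TD2}, Remark \ref{Rem1.2} and Cauchy--Schwarz in $v$. Testing \eqref{A1}$_2$ against divergence-free fields is the dual form of the paper's application of $\mathbb P$ to eliminate the exact gradient $\nabla_x\Pi^\varepsilon$; that gradient vanishes exactly, so your phrase ``up to an $O(1)$ remainder'' is unnecessary.

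The one genuine difference is that you take ${\rm div}_x u=0$ and the distributional identity directly from Proposition \ref{prop2}. This is legitimate and shorter, whereas the paper re-derives both by passing to the limit in \eqref{A1}.
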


\begin{rem}
 Our results still hold for  the compressible isentropic NS-VFP system (i.e., the system \eqref{I1} with an additional viscous term 
$-\mu\Delta u^\epsilon$ in \eqref{I1}$_2$, cf. \cite{MV-MMMAS-2007}). 
Comparing to the Euler-VFP system \eqref{I1}, in this case, the viscous term bring dissipation 
for the velocity field $u^\varepsilon$ and the whole arguments are much simpler, we omit the details here for simplicity.
\end{rem}
 
\begin{rem}
It is very interesting to  extend our results to the non-isentropic Euler-VFP system introduced in \cite{BBBGLLM-esaim-2009} which is our next research topic. In this situation, it is more challenging to analysis the dissipation effects of the fluid velocity and the fluid temperature.
\end{rem}

\subsection{Strategies in our proofs}
We now explain the main ideas in the proofs of Theorems \ref{Th1}, \ref{Th4}, and \ref{Th5}, as well as Proposition \ref{prop3}. First, we establish the uniform-in-$\varepsilon$ global estimate as stated in Theorem \ref{Th1}. The main difficulty stems from the singular acoustic terms in the scaled compressible Euler part in \eqref{A1}.
More precisely, the density and momentum equations \eqref{A1}$_1$--\eqref{A1}$_2$ contain the singular acoustic terms
\begin{align*}
\frac{1+\varepsilon q^\varepsilon}{\varepsilon}{\rm div}_x  u^\varepsilon\quad \text{and}\quad
\frac{1}{\varepsilon}
\frac{P'(1+\varepsilon q^\varepsilon)}
     {1+\varepsilon q^\varepsilon}
\nabla_x q^\varepsilon .    
\end{align*}
If one treats the fluid part only as a pure Euler system, these terms cannot be controlled by any parabolic smoothing  since the fluid equation is inviscid. And we can only obtain local-in-time estimates. Therefore,  
to overcome this difficulty, the key point is to exploit the intrinsic symmetric structure of the acoustic part.

Inspired by the standard symmetrization for the compressible Euler--VFP system \cite{CDM-KRM-2011}, during the high-order energy estimates, we multiply \eqref{G3.9}$_1$ by $\frac{P'(1+\varepsilon q^\varepsilon)}{(1+\varepsilon q^\varepsilon)^2}\partial_x^\alpha q^\varepsilon$ and \eqref{G3.9}$_2$ by \(\partial_x^\alpha u^\varepsilon\), respectively. 
After integration by parts, the two leading singular terms of order \(\varepsilon^{-1}\) cancel out precisely. The remaining terms involve spatial derivatives of the coefficient $\frac{P'(1+\varepsilon q^\varepsilon)}{1+\varepsilon q^\varepsilon}$, and the remaining commutator terms are treated as standard nonlinear terms (see Remark \ref{rem3.1}). 
Meanwhile, the fluid-particle coupling generates the relaxation term $b^\varepsilon - u^\varepsilon$, which offers a real damping mechanism that is lacking in the pure Euler equations. Motivated by \cite{LNW-2026},
the dissipation of \(u^\varepsilon\) is recovered through
\begin{align*}
\|\nabla u^\varepsilon\|_{H^2}\lesssim\|b^\varepsilon - u^\varepsilon\|_{H^3}+\|\nabla b^\varepsilon\|_{H^2}.
\end{align*}
Therefore, the relative velocity dissipation $b^\varepsilon-u^\varepsilon$, the moment equations for \(a^\varepsilon\) and \(b^\varepsilon\), and the symmetric cancellation together yield the uniform-in-\(\varepsilon\) global energy estimate.

Before proceeding with the global error estimate \eqref{TD2} in Theorem \ref{Th4}, we need to establish Proposition \ref{prop3}. On the one hand, we need to handle the linear or nonlinear terms concerning $D_{t}\pi$ and $\nabla_{x} \pi$ in the system \eqref{G5.2}. On the other hand, we also need $\pi \in C(\mathbb{R}^+; H^2)$ to prove the strong convergence in Theorem \ref{Th5}.
We first rewrite the pressure term through the Helmholtz decomposition: 
\begin{align*}
 \nabla_{x}\pi=\mathbb Q(b-u-au-u\cdot\nabla_{x}u).  
\end{align*}
Then, by meticulous calculation, we obtain the estimates \eqref{TGGG1} and \eqref{TGGG2}. The estimate  \eqref{TGGG3} is remarkable because we need to estimate $\|\partial_{t}\pi\|_{\dot H^{-1}}$. Compared with the $L^2$ estimate of $\nabla \pi$ in \eqref{TGGG1}, there is a loss of the second-order spatial derivative.
To handle this challenging term, we decompose the estimate of $\partial_{t}\pi$ into two parts (cf. \eqref{G4.8}), specifically:
\begin{align*} 
 \|\partial_{t}\pi\|_{\dot H^{-1}}\lesssim&\, \|  \partial_{t}(b-u) \|_{\dot H^{-1}}+\|  \partial_{t}(au)+\partial_{t}(u\cdot\nabla_{x}u)\|_{\dot H^{-1}} 
 =: \mathcal{I}_1+\mathcal{I}_2.
\end{align*}
For the term $\mathcal I_2$, given its nonlinear product property, we can handle it by employing Hardy--Littlewood--Sobolev's inequality $\|\Lambda^{-1}g\|_{L^2} \lesssim \|g\|_{L^{6/5}}  $ in Lemma \ref{L2.3}.
For the estimate of the linear term $\partial_{t}(b - u)$ in $\mathcal{I}_1$, we need to take into account some structural elements. The reason is that the equation regarding $b - u$ takes the form (see \eqref{G4.9}):
\begin{align} \label{LNZG1}
\partial_{t}(b-u) +2(b-u)=2au+u\cdot\nabla_{x} u+\nabla_{x}\pi -\nabla_{x}a-{\nabla_{x}}\cdot\Gamma (\{\mathbf{I}-\mathbf{P}\}f),
\end{align}
which contains the difficulty term $\nabla_{x}\pi$.  
To estimate \(b-u\) in \eqref{LNZG1}, we use the key observations derived from the equations for \(\mathbb P(b-u)\) and \(\mathbb Q b\) (see \eqref{G4.10}--\eqref{G4.11}):
\begin{align*} 
\partial_{t}(\mathbb P b-u)+2(\mathbb Pb-u)=&\, \mathbb P\big( 
2au+u\cdot\nabla_{x} u-\nabla_{x}a-{\rm div}_x \Gamma(\{\mathbf{I}-\mathbf{P}\}f)\big),\\
\partial_{t}\mathbb Q b+\mathbb Q b=&\,\mathbb Q\big(\nabla_{x}a-{\rm div}_x \Gamma (\{\mathbf{I}-\mathbf{P}\}f) +au\big).
\end{align*}
Both the above equations exhibit a damping structure, and the linear parts $\nabla_{x}a$ and ${\rm div}_x \Gamma(\{\mathbf{I}-\mathbf{P}\}f)$  are higher-order terms.
Therefore, by using the corresponding semi-group representation, together with the decay estimates \eqref{TGG3},
we are able to estimate  \(\mathcal I_1\), and thus prove \eqref{TGGG3}. Moreover, the estimate \eqref{TGGG4} follows from Lemma \ref{L2.3} combined with the above estimates. Consequently, we complete the proof
of Proposition \ref{prop3}.

Next, under the well-prepared initial condition \eqref{TD1}, we prove the global \(H^2\)-error estimate stated in Theorem \ref{Th4}. To rigorously justify the global-in-time convergence in the low Mach number limit, it is necessary to obtain a uniform convergence rate by comparing the scaled compressible system \eqref{A1} with the limiting incompressible Euler-VFP system \eqref{A2}. The key point is to introduce the corrected acoustic variable
\begin{align*}
\delta q := q^\varepsilon - \varepsilon [P'(1)]^{-1}\pi,
\end{align*}
where \(\pi\) denotes the limiting incompressible pressure in \eqref{A2}$_1$. 
This correction absorbs the contribution of the leading-order pressure in the acoustic variable and removes the forcing term of order \(\varepsilon^{-1}\), leaving only the source terms of order \(\varepsilon\) and quadratic. We then construct the   energy functional $\delta\mathcal{X}(t)$ as follows (see \eqref{G5.1}):
\begin{align*} 
 \delta\mathcal{X}(t):=&\,\sup_{\tau\in [0,t]}\big(\|(\delta q,\delta u)(\tau)\|_{H^2}^2 +  \|\delta f(\tau)\|_{L_v^2(H^2)}^2\big)+\int_0^t  \sum_{|\alpha|\leq 2} \|\{\mathbf{I}-\mathbf{P}\}\delta f(\tau)\|_{\nu}^2\,{\rm d}\tau \nonumber\\
 &+\int_0^t \big(\|\nabla_{x}(\delta a,\delta b,\delta q)(\tau)\|_{H^1}^2+ \|\delta (b-u)(\tau)\|_{H^2}^2    \big)\,{\rm d}\tau.
\end{align*}
Another difficulty comes from the source term of the pressure correction. 
Indeed, the source part \(\delta F_1\) (see \eqref{ff1}$_1$) contains the term $\varepsilon [P^{\prime}(1)]^{-1} D_{t}\pi$, which cannot be treated by a direct \(L^2\)-estimate of \(\delta q\).
Instead, we use the estimate of \(D_t\pi\) obtained in Proposition \ref{prop3}. More precisely, by the duality between \(\dot H^{-1}\) and \(\dot H^1\), we have
\begin{align*}
\varepsilon\int_0^t\!\!\int_{\mathbb R^3}|D_t\pi|\,|\delta q|\,{\rm d}x{\rm d}\tau
\lesssim
\varepsilon \|D_t\pi\|_{L_t^2(\dot H^{-1})}
\|\delta q\|_{L_t^2(\dot H^1)}\lesssim\varepsilon^2+\varepsilon_1\delta\mathcal{X}(t).    
\end{align*}
By combining the refined energy method, the macro-micro decomposition technique, and the estimates of \(D_t\pi\) and \(\nabla_x\pi\) obtained in Proposition \ref{prop3}, we prove Lemmas \ref{L5.1}--\ref{L5.4}. 
Subsequently, we obtain the following {\it global-in-time} error bound:
\begin{align*}
\delta\mathcal{X}(t)\lesssim \varepsilon^2,
\end{align*}
which gives the convergence rate \(\mathcal{O}(\varepsilon)\) and completes the proof of Theorem \ref{Th4}.

Finally, we demonstrate the strong convergence and limiting processes  as in Theorem \ref{Th5}. 
By virtue of the global error estimate \eqref{TD2},
we first obtain the strong convergence of the solution components. The only point requiring an additional observation is the convergence
\begin{align*}
 \mathbb Qu^\varepsilon\rightarrow 0\quad \text{in} \quad C(\mathbb R^+;H^2)\quad \text{as}\quad \varepsilon\rightarrow 0,
\end{align*}
which relies on the incompressibility condition $\nabla_x\cdot u=0$. 
To derive the incompressibility of the limit velocity  $u$, we multiply \eqref{A1}$_1$ by \(\varepsilon\) and obtain
\begin{align}\label{LNZG2}
\rho^\varepsilon {\rm div}_x  u^\varepsilon=-\varepsilon \partial_{t}q^\varepsilon-\varepsilon u^\varepsilon\cdot \nabla_{x}q^\varepsilon.     
\end{align}
Since \(\varepsilon q^\varepsilon\to0\) in \(C([0,T];H^2)\), the right-hand side of \eqref{LNZG2} tends to \(0\) in \(\mathcal D'\). On the other hand, the left-hand side converges to \(\nabla_x\cdot u\), which implies that ${\rm div}_x  u=0$ in $\mathcal{D}^\prime$. 
Then, by using the boundedness of \(\mathbb Q\) on \(H^2\), we obtain
\begin{align*}
  \mathbb Q u^\varepsilon= \mathbb Q(u^\varepsilon-u)+\mathbb Q u=\mathbb Q(u^\varepsilon-u)
\to 0 \quad\text{in}\quad C([0,T];H^2)\quad \text{as}\quad \varepsilon\rightarrow0. 
 \end{align*}
 
When treating the momentum equation \eqref{A1}$_2$, one has to be careful with the singular acoustic term
\begin{align*}
\Lambda^\varepsilon:=\varepsilon^{-1}\frac{P^\prime(1+\varepsilon q^\varepsilon)}{1+\varepsilon q^\varepsilon}\nabla_x q^\varepsilon,
\end{align*}
which is of order \(O(\varepsilon^{-1})\) and cannot be passed to the limit directly.
Fortunately, \(\Lambda^\varepsilon\) is the gradient of a function. More precisely,
\begin{align*}
\Lambda^\varepsilon=\nabla_x\Pi^\varepsilon, \quad
\Pi^\varepsilon
=
\varepsilon^{-2}\big(h(\rho^\varepsilon)-h(1)\big),
\quad \text{with}\quad
h'(r)=\frac{P'(r)}{r}.    
\end{align*}
We therefore apply the Leray projection \(\mathbb P\) to the equation \eqref{A1}$_2$. Since \(\mathbb P\nabla_x = 0\), the singular acoustic term is eliminated at the equation level. The projected equation is free of the \(O(\varepsilon^{-1})\) term and can be taken to the limit by using the strong convergence obtained above and the boundedness of \(\mathbb P\) on \(H^2\), which results in the following equation:
\begin{align*}
\partial_t u+\mathbb{P}(u\cdot\nabla u)=\mathbb{P}(b - u - au).
\end{align*}
Thus, the limiting momentum equation \eqref{A2}$_1$ holds in the sense of distributions.
It remains to take the limit in the kinetic equation \eqref{A1}$_3$.  The products $u^\varepsilon\cdot\nabla_v f^\varepsilon$ and $(u^\varepsilon\cdot v)f^\varepsilon$ converge because $u^\varepsilon\to u$ uniformly on the compact support of the test function while $f^\varepsilon\to f$.
The collision term is handled by the self-adjointness of $\mathcal L$. Consequently, we identify the incompressible Euler--VFP system \eqref{A2} in the sense of distributions, 
thereby completing the proof of Theorem \ref{Th5}.

\subsection{Outline of this paper}
The remainder of this paper is organized as follows. In Section 2, we introduce some notations, basic analytical tools, and several useful lemmas that will be used throughout this paper. In Section 3, we establish uniform-in-$\varepsilon$ global {\it a priori} estimates for the scaled compressible Euler--VFP system \eqref{A1} and prove Theorem \ref{Th1}. Section 4 is devoted to the analysis of the   incompressible Euler--VFP system \eqref{A2}, with particular emphasis on the pressure estimates required for the subsequent error analysis. In Section 5, we justify the global-in-time low Mach number limit. More precisely, in Subsection 5.1, we prove a global $H^2$-error estimate under {\it well-prepared initial data}; the corrected acoustic variable and cancellation of singular acoustic terms are the key ingredients. Finally, in Subsection 5.2, we combine the global $H^2$-error estimate with uniform-in-$\varepsilon$ bounds to pass to the limit, identify the limiting incompressible Euler--VFP system \eqref{A2}, and establish strong convergence in the time-continuous $H^2$ topology with the convergence rate $\mathcal{O}(\varepsilon)$.

\section{Preliminaries}

\subsection{Notations}
Throughout this paper, \(C\) denotes a generic positive constant which is independent of the time variable \(t\) and may vary from one line to another. For two non-negative quantities \(A\) and \(B\), we write \(A\lesssim B\) if there exists a constant \(C > 0\) such that \(A\leq CB\). We also write \(A\sim B\) if both \(A\lesssim B\) and \(B\lesssim A\) are satisfied; equivalently, \(C^{-1}A\leq B\leq CA\). If \(g,h\in X\), where \(X\) is a Banach space, we adopt the convention \(\|(g,h)\|_{X}:=\|g\|_{X}+\|h\|_{X}\). Additionally, \([\cdot,\cdot]\) represents the commutator \([A,B]=AB - BA\) for two operators \(A\) and \(B\).

For functions that depend only on \(v\), we define  
\begin{align*}
|g|_{\nu}^{2}:=\int_{\mathbb R^3_v}\left(|\nabla_v g(v)|^2+\nu(v)|g(v)|^2\right) {\rm d}v,
\end{align*}
where the velocity weight is defined as
\begin{align*}
\nu(v):=1+|v|^2.
\end{align*}
For functions depending on both \(x\) and \(v\), we set
\begin{align*}
\|g\|_{\nu}^{2}
:=
\int_{\mathbb R^3_x}\!\!\int_{\mathbb R^3_v}
\left(
|\nabla_v g(x,v)|^2+\nu(v)|g(x,v)|^2
\right) {\rm d}v{\rm d}x .    
\end{align*}
For a multi-index \(\alpha=(\alpha_1,\alpha_2,\alpha_3)\in\mathbb N^3\), we write
\[
\partial^\alpha=\partial_x^\alpha
:=
\partial_{x_1}^{\alpha_1}
\partial_{x_2}^{\alpha_2}
\partial_{x_3}^{\alpha_3},
\qquad
|\alpha|:=\alpha_1+\alpha_2+\alpha_3 .
\]
For simplicity, \(\partial_i\) is used to denote \(\partial_{x_i}\), where \(i = 1,2,3\). For an integer \(m\geq0\), we adopt the following Sobolev-type norms:
\begin{align*}
\|g\|_{H^m}
:=
\sum_{|\alpha|\leq m}
\|\partial_x^\alpha g\|_{L^2_x},\quad \|g\|_{\dot H^m}
:=
\sum_{|\alpha|= m}
\|\partial_x^\alpha g\|_{L^2_x}
\quad \text{for}\quad g=g(x),    
\end{align*}
and
\begin{align*}
\|g\|_{L_v^2(H^m)}
:=
\sum_{|\alpha|\leq m}
\|\partial_x^\alpha g\|_{L^2_{x,v}}, \quad  \|g\|_{L_v^2(\dot H^m)}
:=
\sum_{|\alpha|= m}
\|\partial_x^\alpha g\|_{L^2_{x,v}}
\quad\text{for}\quad g=g(x,v).    
\end{align*}
Finally, we use $\langle\cdot,\cdot\rangle$ to denote the inner product over the Hilbert space $L_v^2$, i.e., for any $g,h \in L^2_v$,
\begin{align*}
\langle g,h\rangle := \int_{\mathbb{R}^3_v} g(v) h(v) \mathrm{d}v.
\end{align*}

Next, we introduce the macro-micro decomposition and certain properties of $\mathcal{L}$.
\subsection{Macro-micro decomposition}
Motivated by \cite{DFT-2010-CMP,GY-iumj-2004}, we decompose $f(t,x,v)$ as the sum of the fluid part $\mathbf{P}f$ and the particle part $(\mathbf{I} - \mathbf{P})f$, i.e.,
\begin{align}\label{G2.1}
f(t,x,v) = \mathbf{P}f + (\mathbf{I} - \mathbf{P})f.
\end{align}
Here, the velocity orthogonal projection $\mathbf{P}$ is defined by $\mathbf{P}=\mathbf{P_0}\oplus \mathbf{P}_{1}$, where $\mathbf{P}_0f=a\sqrt{M}$ and $\mathbf{P}_1f=b\cdot v\sqrt{M}$, and it satisfies  
\begin{equation*}
\mathbf{P}:  L_{v}^{2}(\mathbb{R}^3)\rightarrow \mathrm{Span}\{\sqrt{M}, v_1 \sqrt{M}, v_2\sqrt{M},v_3\sqrt{M}\}.
\end{equation*}
In view of \eqref{G2.1}, we can decompose $\mathcal{L} f$ as 
\begin{align*}  
\mathcal{L}f = \mathcal{L}\{\mathbf{I}-\mathbf{P}\}f + \mathcal{L}\mathbf{P}f = \mathcal{L}\{\mathbf{I}-\mathbf{P}\}f - \mathbf{P}_1f.  
\end{align*}  
By the self-adjointness and coercivity of the Fokker--Planck operator \(\mathcal L\), there exists a constant \(\lambda_0>0\) such that
\begin{align}  \label{G2.2}
\langle -\mathcal{L} \{\mathbf{I}-\mathbf{P}\}f, f \rangle  
= \langle -\mathcal{L} \{\mathbf{I}-\mathbf{P}\}f, \{\mathbf{I}-\mathbf{P}\}f \rangle  
\geq \lambda_0 \left| \{\mathbf{I}-\mathbf{P}\}f \right|_{\nu}^2.
\end{align}  
Therefore,  by \eqref{G2.1}, we further obtain
\begin{align}\label{G2.3}  
\langle -\mathcal{L} f, f \rangle \geq \lambda_0 \left| \{\mathbf{I}-\mathbf{P}\}f \right|_{\nu}^2 + |b|^2.  
\end{align}
Thus, in accordance with \cite{AMTU-CPDE-2001}, the Fokker--Planck operator \(\mathcal L\) provides dissipation on both the microscopic component $\{\mathbf{I}-\mathbf{P}\}$ and the macroscopic momentum mode $b$.

\subsection{Some useful lemmas}
Finally, we present several useful lemmas that are frequently utilized throughout this paper.
\begin{lem} [{\!\!\cite[Lemma 2.1]{CDM-KRM-2011}} and {\cite[Lemmas 2.1--2.2]{Dk-MZ-1992}}]\label{L2.1}   
For any $g,h\in H^3(\mathbb{R}^3)$ and any multi-index $\alpha$  with $1\leq|\alpha|\leq3$, it holds that
\begin{align*}
\|g\|_{L^{\infty} (\mathbb{R}^3)} \lesssim&\, \|\nabla_{x}  g\|_{L^{2}(\mathbb{R}^3)}^{\frac{1}{2}}
\|\nabla_{x}^{2}g\|_{L^{2} (\mathbb{R}^3)}^{\frac{1}{2}}, \\
\|gh\|_{H^{1}(\mathbb{R}^3) }  \lesssim&\, \|g\|_{H^{2}(\mathbb{R}^3) }\|\nabla_{x}  h\|_{H^{2}(\mathbb{R}^3) }, \\
\|\partial^{\alpha}_{x}(gh)\|_{L^{2}(\mathbb{R}^3) }
\lesssim&\,\|\nabla_{x}  g\|_{H^{2}(\mathbb{R}^3) }\|\nabla_{x}  h\|_{H^{2}(\mathbb{R}^3) },\\
\|g\|_{L^6(\mathbb{R}^3)} \lesssim&\,  \|\nabla_{x}  g\|_{L^2(\mathbb{R}^3)}\lesssim  \|g\|_{H^1 (\mathbb{R}^3)},\\
  \|g\|_{L^q (\mathbb{R}^3)} \lesssim&\, \|g\|_{H^1(\mathbb{R}^3) }, \quad 2\leq q\leq 6.
\end{align*}
\end{lem}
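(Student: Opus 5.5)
This is Lemma \ref{L2.1}, a collection of standard inequalities on $\mathbb R^3$. Following the citations to \cite{CDM-KRM-2011,Dk-MZ-1992}, I will derive each one from Sobolev embedding, Gagliardo--Nirenberg and H\"older. Throughout I work with $g,h$ in the Schwartz class and pass to $H^3$ by density.

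For the first inequality I use the Gagliardo--Nirenberg inequality $\|g\|_{L^\infty}\lesssim \|g\|_{L^6}^{1/2}\|\nabla g\|_{L^6}^{1/2}$ on $\mathbb R^3$. It can be proved by a Littlewood--Paley splitting into frequencies below and above a threshold $N$, followed by optimizing in $N$. Combining it with the homogeneous Sobolev embedding $\|w\|_{L^6}\lesssim\|\nabla w\|_{L^2}$, applied once to $w=g$ and once to $w=\nabla g$, gives $\|g\|_{L^\infty}\lesssim\|\nabla g\|_{L^2}^{1/2}\|\nabla^2 g\|_{L^2}^{1/2}$.

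The fourth inequality is this same Sobolev embedding; it follows from the Hardy--Littlewood--Sobolev inequality or from the Gagliardo--Nirenberg--Sobolev inequality applied to $|g|^4$. The fifth follows by interpolating between $L^2$ and $L^6$ with H\"older's inequality: $\|g\|_{L^q}\le\|g\|_{L^2}^{\theta}\|g\|_{L^6}^{1-\theta}\lesssim\|g\|_{H^1}$.

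For the second inequality I estimate $gh$ and $\nabla(gh)$ in $L^2$:
\begin{align*}
\|gh\|_{L^2}&\le\|g\|_{L^2}\|h\|_{L^\infty}, &
\|\nabla g\, h\|_{L^2}&\le\|\nabla g\|_{L^2}\|h\|_{L^\infty}, &
\|g\nabla h\|_{L^2}&\le\|g\|_{L^3}\|\nabla h\|_{L^6}.
\end{align*}
By the first inequality, $\|h\|_{L^\infty}\lesssim\|\nabla h\|_{H^1}$. By the fourth and fifth, $\|g\|_{L^3}\lesssim\|g\|_{H^1}$ and $\|\nabla h\|_{L^6}\lesssim\|\nabla^2 h\|_{L^2}$. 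All three terms are therefore bounded by $\|g\|_{H^2}\|\nabla h\|_{H^2}$.

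The third inequality is the only step needing care, because the right-hand side contains no undifferentiated $L^2$ norm of $g$ or $h$. I expand by Leibniz, $\partial^\alpha(gh)=\sum_{\beta\le\alpha}C_{\alpha\beta}\,\partial^\beta g\,\partial^{\alpha-\beta}h$ with $1\le|\alpha|\le3$, and sort the terms by how many derivatives fall on each factor.

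\emph{All derivatives on one factor.} This is the case $\beta=\alpha$ (or $\beta=0$). I bound $\|\partial^\alpha g\|_{L^2}\|h\|_{L^\infty}$, using $\|\partial^\alpha g\|_{L^2}\le\|\nabla g\|_{H^2}$ since $|\alpha|\ge1$, and $\|h\|_{L^\infty}\lesssim\|\nabla h\|_{H^1}$ by the first inequality.

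\emph{Derivatives split between the factors.} Here $1\le|\beta|\le|\alpha|-1\le2$, and $\partial^\beta g$ and $\partial^{\alpha-\beta}h$ each carry between one and two derivatives. I use the H\"older pairing $L^3\times L^6$, or $L^\infty\times L^2$ when one factor carries a single derivative and the other two. Then $\|\nabla g\|_{L^3}\lesssim\|\nabla g\|_{H^1}$ and $\|\nabla^2 h\|_{L^6}\lesssim\|\nabla^3h\|_{L^2}$, and symmetrically with $g$ and $h$ exchanged.

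In every case the bound is $\lesssim\|\nabla g\|_{H^2}\|\nabla h\|_{H^2}$. The whole argument uses only homogeneous norms of $\nabla g$ and $\nabla h$, which is exactly what the statement requires.
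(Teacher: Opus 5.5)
Your argument is correct. Each of the five bounds follows as you describe: the Sobolev embedding $\dot H^1\hookrightarrow L^6$, the Gagliardo--Nirenberg (or Littlewood--Paley) bound for $L^\infty$, $L^2$--$L^6$ interpolation, and H\"older pairings after a Leibniz expansion, with the third inequality correctly avoiding any undifferentiated $L^2$ norm. The paper gives no proof of its own and simply quotes the lemma from Carrillo--Duan--Moussa and Deckelnick; your Sobolev/H\"older argument is the standard route to these bounds, so there is nothing to add.
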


\begin{lem}[{{\!\!\cite{commutator1}} and \cite[Appendix]{commutator2}}]\label{L2.2}
Let $g$ and $h$ be Schwarz functions. For $k\geq 0$, we have
\begin{align*}
\|\nabla_{x}^{k}(gh) \|_{L^r(\mathbb{R}^3)} \lesssim&\, \|g\|_{L^{r_1}(\mathbb{R}^3) }\|\nabla^{k}_{x}h\|_{L^{r_2} }+ \|h\|_{L^{r_3}(\mathbb{R}^3) }\|\nabla^{k}_{x}g\|_{L^{r_4} (\mathbb{R}^3)},\\
\|\nabla^{k}_{x}(gh)-g\nabla^k_{x} h \|_{L^r(\mathbb{R}^3)} \lesssim&\, \|\nabla_{x} g\|_{L^{r_1}(\mathbb{R}^3)}\|\nabla_{x}^{k-1}h\|_{L^{r_2}(\mathbb{R}^3)}+ \|h\|_{L^{r_3}(\mathbb{R}^3)}\|\nabla^{k}_{x}g\|_{L^{r_4}(\mathbb{R}^3)},    
\end{align*}
where $1<r,r_2,r_4<\infty$ and $r_i(1\leq i\leq 4)$ satisfy 
\begin{align*}
\frac{1}{r_1}+\frac{1}{r_2}=\frac{1}{r_3}+\frac{1}{r_4}=\frac{1}{r}.   
\end{align*}
\end{lem}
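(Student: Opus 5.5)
We argue for integer $k\geq 1$, which is the only case used later in the paper; the case $k=0$ is just H\"older's inequality. The plan is to expand by the Leibniz rule, estimate each term with H\"older, interpolate each factor with the Gagliardo--Nirenberg inequality, and recombine with Young's inequality. For general (fractional) $k$ we would replace the Leibniz rule by a Littlewood--Paley paraproduct decomposition and argue as in Kato--Ponce/Coifman--Meyer. Here we only sketch the integer case.

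\emph{First inequality.} Write
\begin{align*}
\nabla^k_x(gh)=\sum_{j=0}^{k}\binom{k}{j}\nabla_x^j g\,\nabla_x^{k-j}h .
\end{align*}
The terms $j=0$ and $j=k$ are handled directly by H\"older with the pairs $(r_1,r_2)$ and $(r_3,r_4)$. For $0<j<k$ we set $\theta=j/k$ and choose $p_j,q_j$ by
\begin{align*}
\frac1{p_j}=\frac{1-\theta}{r_1}+\frac{\theta}{r_4},\qquad \frac1{q_j}=\frac{\theta}{r_3}+\frac{1-\theta}{r_2}.
\end{align*}
Since $\frac1{r_1}+\frac1{r_2}=\frac1{r_3}+\frac1{r_4}=\frac1r$, these satisfy $\frac1{p_j}+\frac1{q_j}=\frac1r$, so H\"older applies. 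Gagliardo--Nirenberg then gives
\begin{align*}
\|\nabla^j_x g\|_{L^{p_j}}\lesssim \|g\|_{L^{r_1}}^{1-\theta}\|\nabla^k_x g\|_{L^{r_4}}^{\theta},\qquad
\|\nabla^{k-j}_x h\|_{L^{q_j}}\lesssim \|h\|_{L^{r_3}}^{\theta}\|\nabla^k_x h\|_{L^{r_2}}^{1-\theta}.
\end{align*}
Hence each product is bounded by
\begin{align*}
\big(\|g\|_{L^{r_1}}\|\nabla^k_x h\|_{L^{r_2}}\big)^{1-\theta}\big(\|h\|_{L^{r_3}}\|\nabla^k_x g\|_{L^{r_4}}\big)^{\theta},
\end{align*}
and Young's inequality $A^{1-\theta}B^{\theta}\leq A+B$ yields the claim.

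\emph{Commutator inequality.} Subtracting $g\nabla^k_x h$ removes exactly the $j=0$ term, so every remaining term has $j\geq1$. We write $\nabla^j_x g=\nabla^{j-1}_x(\nabla_x g)$ and repeat the argument with $k$ replaced by $k-1$ and $g$ replaced by $\nabla_x g$. Now $\theta=(j-1)/(k-1)$. The factor $\nabla_x g$ is interpolated between $\|\nabla_x g\|_{L^{r_1}}$ and $\|\nabla_x^{k}g\|_{L^{r_4}}$, and $h$ between $\|\nabla_x^{k-1}h\|_{L^{r_2}}$ and $\|h\|_{L^{r_3}}$. The exponent bookkeeping is the same as before and again closes with Young's inequality.

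\emph{Main obstacle.} The delicate step is the validity of Gagliardo--Nirenberg at endpoint exponents. When $r_1$ or $r_3$ equals $\infty$, the intermediate estimate is still fine because $0<\theta<1$ and $r_2,r_4<\infty$. The exceptional Gagliardo--Nirenberg cases, and non-integer $k$, are exactly where we would fall back on the paraproduct/Littlewood--Paley proof of the cited references. There the key input is Bernstein's inequality on dyadic blocks together with the $L^r$-boundedness, for $1<r<\infty$, of the Coifman--Meyer bilinear operators. This is why the hypothesis $1<r,r_2,r_4<\infty$ is needed.
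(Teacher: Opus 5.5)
Your proof is correct for integer $k$. That is what $\nabla_x^k$ denotes here, and it is all the paper ever uses (spatial derivatives of order $|\alpha|\le 3$).

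The paper gives no proof of its own. It quotes the estimates from the cited Kato--Ponce-type literature, where the general statement also covers non-integer orders. That general proof rests on Coifman--Meyer/paraproduct bilinear estimates, and this is where the hypothesis $1<r,r_2,r_4<\infty$ comes from.

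You take the classical elementary route instead:
\begin{itemize}
\item the Leibniz expansion;
\item H\"older with the convex-combination exponents $1/p_j=(1-\theta)/r_1+\theta/r_4$ and $1/q_j=\theta/r_3+(1-\theta)/r_2$;
\item the scale-invariant Gagliardo--Nirenberg inequality with $\theta=j/k$;
\item Young's inequality.
\end{itemize}
The commutator reduces to the same bookkeeping for the pair $(\nabla_x g,h)$ at order $k-1$. This is self-contained and fully sufficient for the applications in Sections 3 and 5. The cost is that it says nothing about fractional orders, which the paper does not need.

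Two small remarks.

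\emph{The endpoint worry in your last paragraph does not arise.} For $0<\theta=j/k<1$, the inequality $\|\nabla_x^j u\|_{L^p}\lesssim\|u\|_{L^q}^{1-\theta}\|\nabla_x^k u\|_{L^s}^{\theta}$ with $\frac1p=\frac{1-\theta}{q}+\frac{\theta}{s}$ has no exceptional cases. Nirenberg's exceptional case concerns only $\theta=1$. The $j=0$ exception is irrelevant, since in every intermediate term both factors carry at least one derivative. So in the integer case your argument does not need $r,r_2,r_4<\infty$ at all.

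\emph{The case $k=1$ of the commutator.} The commutator is just $h\nabla_x g$, so there are no intermediate terms and $\theta=(j-1)/(k-1)$ need not be defined. H\"older with the pair $(r_3,r_4)$ closes it directly.
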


\begin{lem}[{\!\!\cite[ Theorem 1]{Stein-1970}}] \label{L2.3}
Let $0<s<3$, $1<p<q<\infty$, $\frac{1}{q}+\frac{s}{3}=\frac{1}{p}$, then
\begin{align*}
\|\Lambda^{-1} g\| _{L^q}\lesssim \|g\|_{L^p}.   
\end{align*}

\end{lem}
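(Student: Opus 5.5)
The plan is to read $\Lambda^{-1}$ in the statement as the Riesz potential $\Lambda^{-s}=(-\Delta_x)^{-s/2}$, specialized to $s=1$ in the applications (for instance $p=\frac65$, $q=2$, as used in the estimate of $\mathcal I_2$ in the introduction). I would prove the bound by Hedberg's pointwise argument, reducing it to the Hardy--Littlewood maximal theorem. It suffices to treat Schwartz functions $g$ and conclude by density. The first step is the kernel representation: for $0<s<3$ the Fourier multiplier $|\xi|^{-s}$ is the Fourier transform of $c_s|x|^{s-3}$, so
\begin{align*}
\Lambda^{-s}g(x)=c_s\int_{\mathbb R^3}\frac{g(y)}{|x-y|^{3-s}}\,{\rm d}y .
\end{align*}
In particular $|\Lambda^{-s}g|\le c_s\, I_s|g|$, and we may assume $g\ge0$.

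Next I fix $x$ and a radius $R>0$, and split the integral into $|x-y|<R$ and $|x-y|\ge R$. For the near part, I decompose the ball into dyadic annuli $2^{-k-1}R\le|x-y|<2^{-k}R$. On each annulus the kernel is at most $(2^{-k-1}R)^{s-3}$ and the integral of $g$ is at most $C(2^{-k}R)^3 Mg(x)$, where $M$ is the Hardy--Littlewood maximal function. Summing the geometric series, which converges because $s>0$, gives the bound $CR^sMg(x)$. For the far part, I apply H\"older's inequality with exponents $p$ and $p'$. Since
\begin{align*}
\big\||\cdot|^{s-3}\mathbf 1_{|\cdot|\ge R}\big\|_{L^{p'}}\simeq R^{s-3/p},
\end{align*}
which is finite exactly because $(3-s)p'>3$, i.e.\ $s<3/p$ (equivalent to $q<\infty$), this part is bounded by $CR^{s-3/p}\|g\|_{L^p}$.

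The key step is optimizing in $R$. Choosing $R=(\|g\|_{L^p}/Mg(x))^{p/3}$ balances the two terms. Using $\frac{1}{q}=\frac1p-\frac s3$, the result is
\begin{align*}
|\Lambda^{-s}g(x)|\lesssim (Mg(x))^{p/q}\,\|g\|_{L^p}^{1-p/q}.
\end{align*}
Raising this to the power $q$, integrating, and invoking the $L^p$-boundedness of $M$ (valid since $p>1$) gives
\begin{align*}
\|\Lambda^{-s}g\|_{L^q}^q\lesssim\|g\|_{L^p}^{q-p}\|Mg\|_{L^p}^{p}\lesssim\|g\|_{L^p}^q ,
\end{align*}
which is the claim.

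I expect the only delicate points to be the exponent bookkeeping in the far-field H\"older step and the use of the strong-type maximal inequality, which is exactly where $p>1$ is needed. Everything else is routine. As an alternative route, one could prove the weak-type $(p,q)$ bound for $I_s$ and then apply Marcinkiewicz interpolation, but Hedberg's argument is shorter.
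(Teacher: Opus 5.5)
Your proof is correct, and reading $\Lambda^{-1}$ as $\Lambda^{-s}$ is necessary, not just convenient. By scaling, the literal statement can only hold for $s=1$, which is the case the paper actually uses ($p=\frac65$, $q=2$, giving $\|g\|_{\dot H^{-1}}\lesssim\|g\|_{L^{6/5}}$). The near-part bound $CR^sMg(x)$, the far-part bound $CR^{s-3/p}\|g\|_{L^p}$ (finite exactly when $s<3/p$, i.e.\ $q<\infty$), and the choice $R=(\|g\|_{L^p}/Mg(x))^{p/3}$ all check out.

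The paper gives no proof of its own; it simply cites Stein's Theorem 1 (Chapter V). The proof there is essentially the alternative you mention at the end:
\begin{itemize}
\item split the kernel $|y|^{s-3}$ at a radius $\mu$ chosen in terms of the level $\lambda$, rather than pointwise in $x$;
\item bound the local piece in $L^p$ by Young's inequality (its $L^1$ norm is $\simeq\mu^s$);
\item bound the far piece in $L^\infty$ by H\"older (its $L^{p'}$ norm is $\simeq\mu^{s-3/p}=\mu^{-3/q}$);
\item deduce a weak-type $(p,q)$ estimate, and obtain the strong bound by Marcinkiewicz interpolation along the line $\frac1q=\frac1p-\frac s3$.
\end{itemize}

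So the two proofs share the same near/far decomposition and the same exponent bookkeeping. They differ in how the two pieces are recombined, and hence in where $p>1$ enters.

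Stein's route avoids the maximal function but relies on an off-diagonal interpolation theorem, which needs weak-type estimates on an open range of exponents around $p$. Your Hedberg route replaces interpolation with the strong-type $L^p$ maximal inequality. It also produces the pointwise bound $|\Lambda^{-s}g|\lesssim (Mg)^{p/q}\|g\|_{L^p}^{1-p/q}$ along the way. That is more than the paper needs, but it gives a shorter, self-contained argument modulo the maximal theorem.
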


\begin{lem}[\!\!\cite{AF-Pa-2003}]\label{L2.4}
Suppose that $1\leq r\leq s\leq q\leq \infty$, and
\begin{align*}
\frac{1}{s}=\frac{\zeta}{r}+\frac{1-\zeta}{q},
\end{align*}
where $0\leq \zeta\leq 1$.
Assume also $g\in L^r(\mathbb{R}^3)\cap L^q(\mathbb{R}^3)$. Then $g\in L^s(\mathbb{R}^3)$, and
\begin{align*}
\|g\|_{L^s}\leq \|g\|_{L^r}^\zeta \|g\|_{L^q}^{1-\zeta}.    
\end{align*}
\end{lem}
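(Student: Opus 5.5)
This is the classical interpolation (log-convexity) inequality for Lebesgue norms, and I would prove it directly by Hölder's inequality applied to a suitable factorization of $|g|^s$. First I dispose of the degenerate cases. If $\zeta=1$, then $1/s=1/r$, so $s=r$ and the claim is the identity $\|g\|_{L^s}=\|g\|_{L^r}$. If $\zeta=0$, then $s=q$ and again there is nothing to prove. The case $s=\infty$ forces $r=s=q=\infty$ because $r\le s\le q$, so it is trivial as well. Hence I may assume $0<\zeta<1$ and $s<\infty$.

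Suppose first that $q<\infty$. I would write
\begin{align*}
|g|^s=|g|^{s\zeta}\,|g|^{s(1-\zeta)},
\end{align*}
and apply Hölder's inequality with the exponents
\begin{align*}
p_1=\frac{r}{s\zeta},\qquad p_2=\frac{q}{s(1-\zeta)}.
\end{align*}
These are admissible conjugate exponents: both lie in $(1,\infty)$, and the defining relation $\frac1s=\frac\zeta r+\frac{1-\zeta}q$, multiplied by $s$, gives exactly $\frac1{p_1}+\frac1{p_2}=1$. Hölder then yields
\begin{align*}
\int_{\mathbb R^3}|g|^s\,{\rm d}x\le \Big(\int_{\mathbb R^3}|g|^{r}\,{\rm d}x\Big)^{\frac{s\zeta}{r}}\Big(\int_{\mathbb R^3}|g|^{q}\,{\rm d}x\Big)^{\frac{s(1-\zeta)}{q}}
=\|g\|_{L^r}^{s\zeta}\,\|g\|_{L^q}^{s(1-\zeta)}.
\end{align*}
Taking $s$-th roots gives the claim. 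The finiteness of the right-hand side also shows that $g\in L^s$.

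If $q=\infty$, then $s=r/\zeta$, and I would use the pointwise bound $|g|^s\le \|g\|_{L^\infty}^{s(1-\zeta)}|g|^{s\zeta}=\|g\|_{L^\infty}^{s(1-\zeta)}|g|^{r}$ almost everywhere. Integrating and taking $s$-th roots gives $\|g\|_{L^s}\le\|g\|_{L^\infty}^{1-\zeta}\|g\|_{L^r}^{r/s}$, and $r/s=\zeta$, which is the desired bound.

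None of these steps is a real obstacle. The only point requiring care is checking that the relation between $r$, $s$, $q$ and $\zeta$ makes the Hölder exponents exactly conjugate, together with the separate treatment of the endpoint case $q=\infty$.
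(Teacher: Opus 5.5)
Your proof is correct, and it is the standard Hölder-factorization argument found in the cited reference; the paper itself gives no proof of this lemma and only cites Adams--Fournier. One small correction: when $s=\infty$, the ordering $r\le s\le q$ only forces $q=\infty$, and $r=\infty$ follows instead from $0=\zeta/r$ once $\zeta>0$ (the case $\zeta=0$ having already been handled), so your conclusion stands but the stated justification should be adjusted.
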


\section{Uniform global well-posedness}

This section is devoted to proving the global well-posedness of the Cauchy problem \eqref{A1}, along with the derivation of estimates which are uniform in $\varepsilon$.

\subsection{Uniform global a priori estimates}
{Now, we proceed to establish 
global a priori estimates
for $(q^\varepsilon,u^\varepsilon,f^\varepsilon)$ under the assumption that
\begin{align}\label{G3.1}
\sup_{0\leq t\leq T}  \big\{ \|(q^\varepsilon,u^\varepsilon)(t)\|_{H^3} + \|f^\varepsilon(t)\|_{L_v^2(H^3)} \big\} \leq \sigma ,
\end{align}
where $0 < \sigma < 1$ is a sufficiently small constant independent of $\varepsilon$. Here, $(q^\varepsilon, u^\varepsilon, f^\varepsilon)$ denotes the strong solution to the scaled compressible Euler--VFP system \eqref{A1} on the time interval $[0,T]$, for some given $T > 0$.}
To begin with, we establish the zero-order estimate for $(q^\varepsilon,u^\varepsilon,f^\varepsilon)$.

\begin{lem}\label{L3.1}
For the strong solution $(q^\varepsilon, u^\varepsilon, f^\varepsilon)$ to the compressible Euler--VFP system \eqref{A1}, there exists a positive constant $\lambda_1$ independent of $\varepsilon$, such that
\begin{align}\label{G3.2}
&\frac{{\rm d}}{{\rm d}t} \big( P^\prime(1)\|q^\varepsilon\|_{L^2}^2+\|u^\varepsilon\|_{L^2}^2+\|f^\varepsilon\|_{L_v^2(L^2)}^2     \big) \nonumber\\ 
&\quad +\lambda_1\big( \|\{\mathbf{I}-\mathbf{P}\}f^\varepsilon\|_{\nu}^2+\|b^\varepsilon-u^\varepsilon\|_{L^2}^2 \big)
\lesssim \sigma \|\nabla_{x}(q^\varepsilon,u^\varepsilon,a^\varepsilon,b^\varepsilon)\|_{L^2}^2,  
\end{align}
for any $0\leq t<T$.    
\end{lem}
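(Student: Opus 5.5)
We take the $L^2$ inner products of \eqref{A1}$_1$ with $P'(1)q^\varepsilon$, of \eqref{A1}$_2$ with $u^\varepsilon$, and of \eqref{A1}$_3$ with $f^\varepsilon$ over $\mathbb R^3_x$ (resp. $\mathbb R^3_x\times\mathbb R^3_v$) and add the three identities. The proof has three steps: cancel the $\varepsilon^{-1}$ terms, extract the dissipation from the drag/Fokker--Planck coupling, and bound the remaining cubic terms.

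\textbf{Singular terms.} The leading singular terms are $\varepsilon^{-1}P'(1)\int q^\varepsilon{\rm div}_x u^\varepsilon$ and $\varepsilon^{-1}\int \frac{P'(1+\varepsilon q^\varepsilon)}{1+\varepsilon q^\varepsilon}\nabla_x q^\varepsilon\cdot u^\varepsilon$. We split the coefficient as $P'(1)+\varepsilon g(\varepsilon q^\varepsilon)q^\varepsilon$ with $g$ smooth and bounded for small argument. The $P'(1)$ parts cancel exactly after one integration by parts. The remaining part of the pressure coefficient, together with the term $P'(1)\int q^\varepsilon\, q^\varepsilon\,{\rm div}_xu^\varepsilon$ coming from $\frac{1+\varepsilon q^\varepsilon}{\varepsilon}$, carries no negative power of $\varepsilon$. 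Together with the convection terms $\int u^\varepsilon\cdot\nabla_x q^\varepsilon\, q^\varepsilon$ and $\int u^\varepsilon\cdot\nabla_x u^\varepsilon\cdot u^\varepsilon$, all of these are cubic. Each is bounded by
\[
\|(q^\varepsilon,u^\varepsilon)\|_{L^3}\,\|(q^\varepsilon,u^\varepsilon)\|_{L^6}\,\|\nabla_x(q^\varepsilon,u^\varepsilon)\|_{L^2}\lesssim \sigma\|\nabla_x(q^\varepsilon,u^\varepsilon)\|_{L^2}^2,
\]
using Lemma \ref{L2.1} and \eqref{G3.1}.

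\textbf{Dissipative coupling.} For the kinetic part, the transport terms $v\cdot\nabla_x$ and $u^\varepsilon\cdot\nabla_v$ vanish. The source gives $-\int u^\varepsilon\cdot\langle v\sqrt M,f^\varepsilon\rangle=-\int u^\varepsilon\cdot b^\varepsilon$, and \eqref{G2.3} gives $-\langle\mathcal L f^\varepsilon,f^\varepsilon\rangle\ge\lambda_0|\{\mathbf I-\mathbf P\}f^\varepsilon|_\nu^2+|b^\varepsilon|^2$. On the fluid side, the right-hand side of \eqref{A1}$_2$ contributes
\[
\int \frac{(b^\varepsilon-u^\varepsilon-a^\varepsilon u^\varepsilon)\cdot u^\varepsilon}{1+\varepsilon q^\varepsilon}.
\]
We write $\frac{1}{1+\varepsilon q^\varepsilon}=1+O(\varepsilon q^\varepsilon)$ and keep the linear part $\int(b^\varepsilon-u^\varepsilon)\cdot u^\varepsilon$. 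Combining it with $|b^\varepsilon|^2-\int u^\varepsilon\cdot b^\varepsilon$ from the kinetic side yields
\[
\int (|b^\varepsilon|^2-2u^\varepsilon\cdot b^\varepsilon+|u^\varepsilon|^2)=\|b^\varepsilon-u^\varepsilon\|_{L^2}^2
\]
on the dissipative side. This is the relaxation dissipation, with $\lambda_1=\min\{\lambda_0,1\}$ up to a factor $2$ from the time derivative.

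\textbf{Remaining cubic terms (main obstacle).} The leftovers are $\int \varepsilon q^\varepsilon\,(b^\varepsilon-u^\varepsilon)\cdot u^\varepsilon/(1+\varepsilon q^\varepsilon)$, $\int a^\varepsilon|u^\varepsilon|^2$, and the kinetic term $\frac12\int u^\varepsilon\cdot v|f^\varepsilon|^2$. These must be controlled by $\|\nabla_x(\cdot)\|_{L^2}^2$ and the dissipation, with no zero-order $\|u^\varepsilon\|_{L^2}^2$ on the right-hand side.

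\emph{First two terms.} We use $L^3\times L^6\times L^2$. For the first, the factor $\varepsilon\le1$ is harmless; we bound it by $\|q^\varepsilon\|_{L^3}\|u^\varepsilon\|_{L^6}\|b^\varepsilon-u^\varepsilon\|_{L^2}$. We control $\|u^\varepsilon\|_{L^6}\lesssim\|\nabla_x u^\varepsilon\|_{L^2}$ and absorb $\sigma\|b^\varepsilon-u^\varepsilon\|^2$. For $\int a^\varepsilon|u^\varepsilon|^2$ we use $\|a^\varepsilon\|_{L^3}\|u^\varepsilon\|_{L^6}\|u^\varepsilon\|_{L^2}$. This is the delicate one, since it contains a genuine $\|u^\varepsilon\|_{L^2}$. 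I would instead put $\|a^\varepsilon\|_{L^{3/2}}$ on it and use $\|u^\varepsilon\|_{L^6}^2\lesssim\|\nabla u^\varepsilon\|^2$. Since $L^{3/2}$ is not available from $H^3$, the fallback is to write $\int a^\varepsilon |u^\varepsilon|^2\le \|a^\varepsilon\|_{L^2}\|u^\varepsilon\|_{L^4}^2$. With $\|u\|_{L^4}^2\lesssim \|u\|_{L^3}\|u\|_{L^6}$, this gives $\lesssim\sigma\|\nabla_x u^\varepsilon\|_{L^2}\|u^\varepsilon\|_{L^3}$, which I would split via $\|u\|_{L^3}\lesssim\|u\|^{1/2}\|\nabla u\|^{1/2}$ and smallness; if this does not close, I would move a factor onto $b^\varepsilon-u^\varepsilon$ plus $b^\varepsilon$, and use $\|b^\varepsilon\|_{L^6}\lesssim\|\nabla b^\varepsilon\|$.

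\emph{Kinetic term.} We decompose $f^\varepsilon=\mathbf Pf^\varepsilon+\{\mathbf I-\mathbf P\}f^\varepsilon$. The microscopic pieces are bounded by $\|u^\varepsilon\|_{L^\infty}\|\{\mathbf I-\mathbf P\}f^\varepsilon\|_\nu\|f^\varepsilon\|$ and absorbed. The macroscopic piece is $\int u^\varepsilon\cdot a^\varepsilon b^\varepsilon$, which is handled by $L^3\times L^6\times L^2$ as above, landing in $\sigma\|\nabla_x(u^\varepsilon,a^\varepsilon,b^\varepsilon)\|_{L^2}^2$. This yields \eqref{G3.2}.
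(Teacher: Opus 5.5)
\textbf{Gap: the terms $\int a^\varepsilon|u^\varepsilon|^2$ and $\int a^\varepsilon u^\varepsilon\cdot b^\varepsilon$ cannot be bounded separately.} Your cancellation of the $\varepsilon^{-1}$ terms, the cubic convection and pressure bounds, and the relaxation dissipation $\|b^\varepsilon-u^\varepsilon\|_{L^2}^2$ are all correct and match the paper. The problem is the last step. You try to bound $-\int a^\varepsilon|u^\varepsilon|^2$ on its own. You also try to bound, on its own, the macroscopic part of the kinetic term, $\frac12\int u^\varepsilon\cdot\langle v\mathbf{P}f^\varepsilon,\mathbf{P}f^\varepsilon\rangle=\int a^\varepsilon u^\varepsilon\cdot b^\varepsilon$, which uses $\langle v_i\mathbf{P}f,\mathbf{P}f\rangle=2ab_i$. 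Neither can be done.

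The right-hand side of \eqref{G3.2} and the dissipation contain no zero-order norm of $a^\varepsilon$, $u^\varepsilon$ or $b^\varepsilon$ individually. So a cubic integral has only two ways to close:
\begin{itemize}
\item put two factors in $L^6$, which forces the third into $L^{3/2}$, and $H^3$ does not provide $L^{3/2}$;
\item make one factor a quantity dissipated at zero order, namely $b^\varepsilon-u^\varepsilon$ or $\{\mathbf{I}-\mathbf{P}\}f^\varepsilon$, and use $L^3\times L^6\times L^2$.
\end{itemize}
For $\int a^\varepsilon u^\varepsilon\cdot b^\varepsilon$ and $\int a^\varepsilon|u^\varepsilon|^2$ the $L^2$ factor would be $\|a^\varepsilon\|_{L^2}$, $\|u^\varepsilon\|_{L^2}$ or $\|b^\varepsilon\|_{L^2}$. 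Hence ``$L^3\times L^6\times L^2$ as above'' does not land in $\sigma\|\nabla_x(u^\varepsilon,a^\varepsilon,b^\varepsilon)\|_{L^2}^2$. Your fallbacks fail for the same reason:
\begin{itemize}
\item $\|a\|_{L^2}\|u\|_{L^3}\|u\|_{L^6}$ leaves a term of size $\sigma\|u\|_{L^2}^{1/2}\|\nabla u\|_{L^2}^{3/2}$, which is not $\lesssim\sigma\|\nabla u\|_{L^2}^2$.
\item Writing $u=b-(b-u)$ just reproduces $\int a^\varepsilon u^\varepsilon\cdot b^\varepsilon$, the same obstruction.
\end{itemize}

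\textbf{The missing idea is an exact combination of these two terms.} In the energy identity the kinetic side contributes $+\int a^\varepsilon u^\varepsilon\cdot b^\varepsilon$ and the fluid side contributes $-\int a^\varepsilon|u^\varepsilon|^2$. Together, by H\"older and \eqref{G3.1},
\[
\int a^\varepsilon u^\varepsilon\cdot b^\varepsilon-\int a^\varepsilon|u^\varepsilon|^2=\int a^\varepsilon u^\varepsilon\cdot(b^\varepsilon-u^\varepsilon)\lesssim\|a^\varepsilon\|_{L^6}\|u^\varepsilon\|_{L^3}\|b^\varepsilon-u^\varepsilon\|_{L^2}\lesssim\sigma\big(\|\nabla_x a^\varepsilon\|_{L^2}^2+\|b^\varepsilon-u^\varepsilon\|_{L^2}^2\big).
\]
This is what the paper's $I_6+I_7$ step uses, following Duan--Liu, and the $\|b^\varepsilon-u^\varepsilon\|_{L^2}^2$ part is then absorbed.

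\textbf{A smaller point on the kinetic term.} The mixed macro--micro part should be bounded by $\|u^\varepsilon\|_{L^3}\|(a^\varepsilon,b^\varepsilon)\|_{L^6}\|\{\mathbf{I}-\mathbf{P}\}f^\varepsilon\|_\nu$. Your bound $\|u^\varepsilon\|_{L^\infty}\|\{\mathbf{I}-\mathbf{P}\}f^\varepsilon\|_\nu\|f^\varepsilon\|$ contains the undissipated $\|(a^\varepsilon,b^\varepsilon)\|_{L^2}$. Only the micro--micro part should be bounded by $\|u^\varepsilon\|_{L^\infty}\|\{\mathbf{I}-\mathbf{P}\}f^\varepsilon\|_\nu^2$.
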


\begin{proof}
Multiplying \eqref{A1}$_1$--\eqref{A1}$_3$ by $P^\prime(1) q^\varepsilon$, $u^\varepsilon$, and $f^\varepsilon$ respectively, then integrating and summing, we obtain
\begin{align}\label{G3.3}
&\frac{1}{2}\frac{{\rm d}}{{\rm d}t} \big( P^\prime(1)\|q^\varepsilon\|_{L^2}^2+\|u^\varepsilon\|_{L^2}^2+\|f^\varepsilon\|_{L_v^2(L^2)}^2\big)+  \int_{\mathbb R^3} \langle  -\mathcal{L} \{\mathbf{I}-\mathbf{P}\}f^\varepsilon , f^\varepsilon\rangle \,{\rm d}x+ \|b^\varepsilon-u^\varepsilon\|_{L^2}^2 \nonumber\\
=&\, \frac{1}{\varepsilon}\int_{\mathbb R^3}  \Big( \frac{P^\prime(1+\varepsilon q^\varepsilon)}{1+\varepsilon q^\varepsilon}-P^\prime(1)    \Big)q^\varepsilon {\rm div}_x  u^\varepsilon\,{\rm d}x+\frac{1}{\varepsilon}\int_{\mathbb R^3}q^\varepsilon\nabla_{x}\Big( \frac{P^\prime(1+\varepsilon q^\varepsilon)}{1+\varepsilon q^\varepsilon}-P^\prime(1)       \Big)\cdot u^\varepsilon\,{\rm d}x\nonumber\\
&-\int_{\mathbb R^3}\frac{\varepsilon q^\varepsilon\big((b^\varepsilon-u^\varepsilon)\cdot u^\varepsilon-a^\varepsilon u^\varepsilon\cdot u^\varepsilon\big)} {1+\varepsilon q^\varepsilon} \,{\rm d}x-\frac{P^\prime(1)}{2} \int_{\mathbb R^3} (q^\varepsilon)^2 {\rm div}_x  u^\varepsilon\,{\rm d}x\nonumber\\
&+\frac{1}{2}\int_{\mathbb R^3} |u^\varepsilon|^2{\rm div}_x  u^\varepsilon \,{\rm d}x+ \frac{1}{2} \int_{\mathbb R^3} u^\varepsilon\cdot \langle vf^\varepsilon , f^\varepsilon\rangle\,{\rm d}x-\int_{\mathbb R^3}a^\varepsilon u^\varepsilon\cdot u^\varepsilon\,{\rm d}x \nonumber\\
\equiv:&\, \sum_{j=1}^7 I_{j}.
\end{align}
For the first two singular terms $I_1$ and $I_2$, applying H\"{o}lder’s and Sobolev’s inequalities along with \eqref{G3.1}, we have
\begin{align}\label{G3.4}
I_1+I_2\lesssim&\, \frac{1}{\varepsilon} \|\varepsilon q^\varepsilon\|_{L^6}\|q^\varepsilon\|_{L^3}\|\nabla_{x} u^\varepsilon\|_{L^2}+  \frac{1}{\varepsilon} \|q^\varepsilon\|_{L^3}\|\varepsilon\nabla_{x}q^\varepsilon\|_{L^2} \|u^\varepsilon\|_{L^6}\nonumber\\
\lesssim&\, \|(q^\varepsilon,u^\varepsilon)\|_{H^1} \|\nabla_{x}(q^\varepsilon,u^\varepsilon)\|_{L^2}^2\nonumber\\
\lesssim&\, \sigma \|\nabla_{x}(q^\varepsilon,u^\varepsilon)\|_{L^2}^2.
\end{align}
Since the embedding $H^2(\mathbb{R}^3) \hookrightarrow L^\infty(\mathbb{R}^3)$ holds, it is easy to check that  
\begin{align*}
\frac{1}{2}\leq1+\varepsilon q ^\varepsilon \leq \frac{3}{2}, 
\end{align*}
which yields
\begin{align}\label{G3.5}
I_3\lesssim&\,  \varepsilon \|q^\varepsilon\|_{L^3} \|b^\varepsilon-u^\varepsilon\|_{L^2} \|u^\varepsilon\|_{L^6}+\varepsilon \|q^\varepsilon\|_{L^6}\|a^\varepsilon\|_{L^6} \|u^\varepsilon\|_{L^3}^2\nonumber\\
\lesssim&\, \varepsilon\sigma \big( \|b^\varepsilon-u^\varepsilon\|_{L^2}^2+ \|\nabla_{x}(a^\varepsilon,u^\varepsilon)\|_{L^2}^2    \big )
\end{align}
Using Lemma \ref{L2.1}, we also have
\begin{align}\label{G3.6}
I_4+I_5\lesssim&\, \|q^\varepsilon\|_{L^3}\|q^\varepsilon\|_{L^6} \|\nabla_{x} u^\varepsilon\|_{L^2}+    \|u^\varepsilon\|_{L^3}\|u^\varepsilon\|_{L^6} \|\nabla_{x} u^\varepsilon\|_{L^2}\nonumber\\
\lesssim&\, \sigma \|\nabla_{x}q^\varepsilon\|_{L^2}^2.
\end{align}
For the remaining terms $I_6$ and $I_7$, using an argument analogous to that in \cite[Lemma 2.1]{DL-KRM-2013}, we get
\begin{align}\label{G3.7}
I_6+I_7\lesssim&\, \|u^\varepsilon\|_{L^3}\|(a^\varepsilon,b^\varepsilon)\|_{L^6} \|\{\mathbf{I}-\mathbf{P}\}f^\varepsilon\|_{\nu} +   \|u^\varepsilon\|_{L^\infty } \|\{\mathbf{I}-\mathbf{P}\}f^\varepsilon\|_{\nu} ^2 \nonumber\\
\lesssim&\, \sigma \big( \|\nabla_{x}(a^\varepsilon,b^\varepsilon)\|_{L^2}^2+\|\{\mathbf{I}-\mathbf{P}\}f^\varepsilon\|_{\nu} ^2   \big).
 \end{align}
Putting the estimates \eqref{G3.4}--\eqref{G3.7} into \eqref{G3.3}, and then using \eqref{G2.2} and the smallness of $\varepsilon$, we obtain \eqref{G3.2}.
\end{proof}

Next, we provide the estimate for $(q^\varepsilon, u^\varepsilon, f^\varepsilon)$ in $\dot{H}^1 \cap \dot{H}^3$.
\begin{lem}\label{L3.2}
For the strong solution $(q^\varepsilon, u^\varepsilon, f^\varepsilon)$ to the compressible Euler--VFP system \eqref{A1}, there exists a positive constant $\lambda_2$ independent of $\varepsilon$, such that
\begin{align}\label{G3.8}
&\frac{{\rm d}}{{\rm d}t}\sum_{1\leq |\alpha|\leq 3} \Big(  \Big\|\frac{ \sqrt{P^\prime(1+\varepsilon q^\varepsilon)}}{1+\varepsilon q^\varepsilon} \partial^\alpha_{x} q^\varepsilon\Big\|_{L^2}^2+\|\partial^\alpha_{x}u^\varepsilon\|_{ L^2}^2+\|\partial^\alpha_{x}f^\varepsilon\|_{L_v^2( L^2)}^2     \Big)\nonumber\\
&\quad +\lambda_2  \sum_{1\leq |\alpha|\leq 3}\|\{\mathbf{I}-\mathbf{P}\}\partial^\alpha_{x} f^\varepsilon\|_{\nu}^2+\lambda_{2}\sum_{1\leq |\alpha|\leq 3}\|\partial^\alpha_{x}(b^\varepsilon-u^\varepsilon)\|_{ L^2}^2  \lesssim  \sigma\|\nabla_{x}(q^\varepsilon,u^\varepsilon,a^\varepsilon,b^\varepsilon)\|_{H^2}^2, 
\end{align}
for any $0\leq t<T$.    
\end{lem}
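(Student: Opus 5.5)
Apply $\partial^\alpha_x$ with $1\le|\alpha|\le3$ to \eqref{A1} to obtain the differentiated system (the paper's \eqref{G3.9}):
\begin{align*}
&\partial_t\partial^\alpha q^\varepsilon+u^\varepsilon\cdot\nabla_x\partial^\alpha q^\varepsilon+\frac{1+\varepsilon q^\varepsilon}{\varepsilon}{\rm div}_x\partial^\alpha u^\varepsilon=R^\alpha_1,\\
&\partial_t\partial^\alpha u^\varepsilon+u^\varepsilon\cdot\nabla_x\partial^\alpha u^\varepsilon+\frac{1}{\varepsilon}\frac{P'(1+\varepsilon q^\varepsilon)}{1+\varepsilon q^\varepsilon}\nabla_x\partial^\alpha q^\varepsilon-\partial^\alpha(b^\varepsilon-u^\varepsilon)=R^\alpha_2.
\end{align*}
The commutators $R^\alpha_1,R^\alpha_2$ collect $[\partial^\alpha,u^\varepsilon\cdot\nabla]$, $\varepsilon^{-1}[\partial^\alpha,\varepsilon q^\varepsilon]{\rm div}\,u^\varepsilon$, $\varepsilon^{-1}[\partial^\alpha,g(\varepsilon q^\varepsilon)]\nabla q^\varepsilon$ with $g(s)=P'(1+s)/(1+s)$, and $\partial^\alpha\big(\frac{-\varepsilon q^\varepsilon(b^\varepsilon-u^\varepsilon)-a^\varepsilon u^\varepsilon}{1+\varepsilon q^\varepsilon}\big)$. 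The kinetic equation is differentiated directly.

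Take the $L^2$ product of the $q$-equation with $w\,\partial^\alpha q^\varepsilon$, where $w=P'(1+\varepsilon q^\varepsilon)/(1+\varepsilon q^\varepsilon)^2$. Take the $u$-equation against $\partial^\alpha u^\varepsilon$ and the $f$-equation against $\partial^\alpha f^\varepsilon$, then sum. The weight is chosen so that $w(1+\varepsilon q^\varepsilon)=g(\varepsilon q^\varepsilon)$. The two $\varepsilon^{-1}$ terms then combine into
\[
\varepsilon^{-1}\int g\,\big(\partial^\alpha q\,{\rm div}\partial^\alpha u+\nabla\partial^\alpha q\cdot\partial^\alpha u\big)=-\varepsilon^{-1}\int\nabla g\cdot\partial^\alpha u\,\partial^\alpha q,
\]
and $\varepsilon^{-1}\nabla g=g'(\varepsilon q^\varepsilon)\nabla q^\varepsilon$ is $O(1)$. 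This is the cancellation of Remark \ref{rem3.1}. The time derivative of the weight produces $\int\partial_t w|\partial^\alpha q|^2$. Using \eqref{A1}$_1$, $\partial_t(\varepsilon q^\varepsilon)=-\varepsilon u\cdot\nabla q-(1+\varepsilon q){\rm div}\,u$ is $O(1)$ in $L^\infty$ by Sobolev, so this term is bounded by $\sigma\|\nabla^{|\alpha|} q\|^2$. The convection terms integrate by parts to $\int{\rm div}\,u\,|\partial^\alpha\cdot|^2$.

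On the kinetic side, $-\langle\mathcal L\partial^\alpha f,\partial^\alpha f\rangle$ gives $\lambda_0\|\{\mathbf I-\mathbf P\}\partial^\alpha f\|_\nu^2+\|\partial^\alpha b\|^2$ by \eqref{G2.3}. The linear term $-\partial^\alpha u\cdot v\sqrt M$ contributes $-\partial^\alpha u\cdot\partial^\alpha b$. The fluid friction gives $\partial^\alpha(b-u)\cdot\partial^\alpha u$. Together these produce exactly $\|\partial^\alpha(b^\varepsilon-u^\varepsilon)\|^2$, as in Lemma \ref{L3.1}.

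The remaining nonlinear terms are $u\cdot\nabla_v\partial^\alpha f$ and $\frac12 u\cdot v f$ commutators. Split $f=\mathbf Pf+\{\mathbf I-\mathbf P\}f$ and bound them by $\sigma(\|\nabla(a,b)\|_{H^2}^2+\sum\|\{\mathbf I-\mathbf P\}\partial^\alpha f\|_\nu^2)$, following \cite{DL-KRM-2013}. The $v$-growth is absorbed by the $\nu$-norm on the micro part and by the Gaussian decay on the macro part.

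The main obstacle is the singular commutators at $|\alpha|=3$. The term $\varepsilon^{-1}[\partial^\alpha,\varepsilon q]{\rm div}\,u=[\partial^\alpha,q]{\rm div}\,u$ is harmless once the $\varepsilon$ is cancelled. The term $\varepsilon^{-1}[\partial^\alpha,g(\varepsilon q)]\nabla q$ needs care. Write $\varepsilon^{-1}\partial^\beta g(\varepsilon q)$ with $|\beta|\ge1$ as a sum of $g^{(k)}\varepsilon^{k-1}\prod\partial^{\beta_i}q$; each term carries at least one factor $\varepsilon^{0}$ and is uniformly bounded. Lemma \ref{L2.2} (with $r_1=\infty$, $r_2=2$ and $r_3=r_4=\ldots$ Sobolev pairs) together with \eqref{G3.1} then gives $\|R^\alpha_2\|_{L^2}\lesssim\sigma\|\nabla(q,u,a,b)\|_{H^2}+\sigma\|b-u\|_{H^3}$. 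There is one delicate point: at top order $|\alpha|=3$, the term $\partial^\alpha(a u)$ and the commutator contain $\nabla^3u$, which is not dissipated directly. Bound it by $\|\nabla u\|_{H^2}\le\|b-u\|_{H^3}+\|\nabla b\|_{H^2}$, and absorb the $\sigma\|b-u\|_{H^3}^2$ part into the left side using the smallness of $\sigma$. Summing over $1\le|\alpha|\le3$ then yields \eqref{G3.8}.
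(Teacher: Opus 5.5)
Your proposal is correct and follows the paper's proof essentially step for step. It uses the same weight $P'(1+\varepsilon q^\varepsilon)/(1+\varepsilon q^\varepsilon)^2$, which cancels the $\varepsilon^{-1}$ terms exactly up to the $O(1)$ remainder $\varepsilon^{-1}\nabla_x g$. It also uses the observation of Remark \ref{rem3.1} that every derivative of $g(\varepsilon q^\varepsilon)$ carries a factor $\varepsilon$, Lemma \ref{L2.2} for the commutators, and the macro--micro splitting for the kinetic terms.

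One bookkeeping fix is needed. Keep the $b^\varepsilon-u^\varepsilon$ contributions in the homogeneous norm $\|\nabla_x(b^\varepsilon-u^\varepsilon)\|_{H^2}$, as in \eqref{G3.13}, for instance via $\|b^\varepsilon-u^\varepsilon\|_{L^\infty}\lesssim\|\nabla_x(b^\varepsilon-u^\varepsilon)\|_{H^1}$. The left side of \eqref{G3.8} has no zero-order $\|b^\varepsilon-u^\varepsilon\|_{L^2}^2$ term, so a $\sigma\|b^\varepsilon-u^\varepsilon\|_{H^3}^2$ on the right cannot be absorbed there. Your conversion of $\|\nabla_x u^\varepsilon\|_{H^2}$ into $b^\varepsilon-u^\varepsilon$ and $\nabla_x b^\varepsilon$ is also unnecessary in this lemma, since $\sigma\|\nabla_x u^\varepsilon\|_{H^2}^2$ is already allowed on the right of \eqref{G3.8}.
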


\begin{proof}
Applying the operator $\partial^\alpha_{x}$ $(1\leq |\alpha|\leq 3)$ to the equations \eqref{A1}$_1$--\eqref{A1}$_3$ yields
\begin{equation} \label{G3.9}
\left\{
\begin{aligned}
& \partial_{t}\partial^\alpha_{x} q^\varepsilon+u^\varepsilon\cdot \nabla_{x} \partial^\alpha_{x}q^\varepsilon+\frac{1+\varepsilon q^\varepsilon}{\varepsilon}\nabla_{x} \cdot \partial^\alpha_{x}u^\varepsilon =-[\partial^\alpha_{x}, u^\varepsilon\cdot\nabla_{x}]q^\varepsilon- [\partial^\alpha_{x
},  q^\varepsilon \nabla_{x
}\cdot] u^\varepsilon,\\
& \partial_{t}\partial^\alpha_{x}u^\varepsilon+ u^\varepsilon\cdot \nabla_{x}\partial^\alpha_{x}u^\varepsilon+ \frac{1}{\varepsilon} \frac{P^\prime(1+\varepsilon q^\varepsilon)}{1+\varepsilon q^\varepsilon} \nabla_{x
}\partial^\alpha_{x
}q^\varepsilon-\partial^\alpha_{x}(b^\varepsilon-u^\varepsilon)\\
&\quad=-[\partial^\alpha_{x},u^\varepsilon\cdot\nabla_{x}]u^\varepsilon-\frac{1}{\varepsilon} \Big[\partial^\alpha_{x}, \frac{P^\prime(1+\varepsilon q^\varepsilon)}{1+\varepsilon q^\varepsilon} \Big] \nabla_{x}q^\varepsilon+\partial^\alpha_{x}\Big( -\frac{\varepsilon q^\varepsilon(b^\varepsilon-u^\varepsilon)+a^\varepsilon u^\varepsilon}{1+\varepsilon q^\varepsilon} \Big),\\
& \partial_t\partial^\alpha_{x} f^\varepsilon+v\cdot \nabla_{x} \partial^\alpha_{x}f^\varepsilon+u^\varepsilon\cdot \nabla_{v} \partial^\alpha_{x}f^\varepsilon -\partial^\alpha_{x} u^\varepsilon\cdot v\sqrt{M} -\mathcal{L}\partial^\alpha_{x} f^\varepsilon\\
&\quad=[-\partial^\alpha_{x},u^\varepsilon\cdot\nabla_{v}]f^\varepsilon+\frac{1}{2}v\cdot \partial^\alpha_{x}(u^\varepsilon f^\varepsilon).
\end{aligned}
\right.
\end{equation}
where we have used the following identity:
\begin{align*}
\frac{1}{\varepsilon}    [\partial^\alpha_{x},(1+\varepsilon q^\varepsilon){\rm div}_x ] u^\varepsilon=[\partial^\alpha_{x
},  q^\varepsilon \nabla_{x
}\cdot ] u^\varepsilon.
\end{align*}

\begin{rem} \label{rem3.1}
Compared with the standard high-order estimate in
\cite[Lemma 2.2]{DL-KRM-2013}, the present equation \eqref{G3.9}$_2$
contains the pressure commutator
\begin{align*}
-\frac{1}{\varepsilon}
\Big[
\partial^\alpha_x,
\frac{P^\prime(1+\varepsilon q^\varepsilon)}
     {1+\varepsilon q^\varepsilon}
\Big]\nabla_x q^\varepsilon ,
\end{align*}
which is not singular. Indeed, since
\begin{align*}
\nabla_x\Big(
\frac{P^\prime(1+\varepsilon q^\varepsilon)}
     {1+\varepsilon q^\varepsilon}
\Big)
&=
\varepsilon
\Big(
\frac{
P^{\prime\prime}(1+\varepsilon q^\varepsilon)(1+\varepsilon q^\varepsilon)
-
P^\prime(1+\varepsilon q^\varepsilon)}
{(1+\varepsilon q^\varepsilon)^2}
\Big)
\nabla_x q^\varepsilon ,
\end{align*}
every spatial derivative of
\(\frac{P^\prime(1+\varepsilon q^\varepsilon)}
        {1+\varepsilon q^\varepsilon}\)
carries one factor \(\varepsilon\). Consequently, by Lemma \ref{L2.2}, we deduce that 
for \(1\le |\alpha|\le 3\),
\begin{align*}
\Big\|
\frac{1}{\varepsilon}
\Big[
\partial^\alpha_x,
\frac{P^\prime(1+\varepsilon q^\varepsilon)}
     {1+\varepsilon q^\varepsilon}
\Big]\nabla_x q^\varepsilon
\Big\|_{L^2}
\leq C
\|\nabla_x q^\varepsilon\|_{H^2}
\|\nabla_x q^\varepsilon\|_{H^{|\alpha|-1}}
\leq C
\|\nabla_x q^\varepsilon\|_{H^2}^2,
\end{align*}
where the constant $C$ is independent of \(\varepsilon\). Therefore, the pressure
commutator is treated as a standard nonlinear term and does not destroy the
uniform-in-$\varepsilon$ high-order energy estimate.
\end{rem}

Multiplying \eqref{G3.9}$_1$--\eqref{G3.9}$_3$ by $
\frac{P^\prime(1+\varepsilon q^\varepsilon)}
     {(1+\varepsilon q^\varepsilon)^2}
\partial^\alpha_x q^\varepsilon $,  $
\partial^\alpha_x u^\varepsilon $,
 $\partial^\alpha_x f^\varepsilon$,
respectively, and summing the resulting equations, we obtain
\begin{align}\label{G3.10}
&\frac{1}{2}\frac{{\rm d}}{{\rm d}t} \Big(  \Big\|\frac{ \sqrt{P^\prime(1+\varepsilon q^\varepsilon)}}{1+\varepsilon q^\varepsilon} \partial^\alpha_{x} q^\varepsilon\Big\|_{L^2}^2+\|\partial^\alpha_{x}u^\varepsilon\|_{ L^2}^2+\|\partial^\alpha_{x}f^\varepsilon\|_{L_v^2( L^2)}^2    \Big) \nonumber\\
&+\int_{\mathbb R^3}\langle \mathcal{L}\{\mathbf{I}-\mathbf{P}\}\partial^\alpha_{x}f^\varepsilon, \partial^\alpha_{x}f^\varepsilon\rangle\,{\rm d}x + \|\partial^\alpha_{x} (b^\varepsilon-u^\varepsilon)\|_{L^2}^2 \nonumber\\
=\,& \frac{1}{2}\int_{\mathbb R^3} \partial_{t} \Big( \frac{P^\prime(1+\varepsilon q^\varepsilon)}{(1+\varepsilon q^\varepsilon)^2}  \Big)  |\partial^\alpha_{x}q^\varepsilon|^2 \,{\rm d}x- \int_{\mathbb R^3} \frac{P^\prime(1+\varepsilon q^\varepsilon)} {(1+\varepsilon q^\varepsilon)^2} \big( [\partial^\alpha_{x}, u^\varepsilon\cdot\nabla_{x}]q^\varepsilon+[\partial^\alpha_{x
},  q^\varepsilon \nabla_{x
}\cdot ] u^\varepsilon  \big) \partial^\alpha_{x}q^\varepsilon\,{\rm d}x \nonumber\\
&+\frac{1}{2}\int_{\mathbb R^3} \nabla_{x
}\cdot \Big( \frac{P^\prime(1+\varepsilon q^\varepsilon)}{(1+\varepsilon q^\varepsilon)^2} u^\varepsilon \Big) |\partial^\alpha_{x}q^\varepsilon|^2\,{\rm d}x+\frac{1}{\varepsilon }\int_{\mathbb R^3} \nabla_{x}\Big ( \frac{P^\prime(1+\varepsilon q^\varepsilon)}{1+\varepsilon q^\varepsilon} \Big)\partial^\alpha_{x} q^\varepsilon \cdot \partial^\alpha_{x} u^\varepsilon \,{\rm d}x\nonumber\\
&-\int_{\mathbb R^3} \Big( [\partial^\alpha_{x},u^\varepsilon\cdot\nabla_{x}]u^\varepsilon+\frac{1}{\varepsilon} \Big[\partial^\alpha_{x}, \frac{P^\prime(1+\varepsilon q^\varepsilon)}{1+\varepsilon q^\varepsilon} \Big] \nabla_{x}q^\varepsilon        \Big)\cdot \partial^\alpha_{x}u^\varepsilon \,{\rm d}x\nonumber\\
&-\int_{\mathbb R^3} \partial^\alpha_{x}\Big( \frac{\varepsilon q^\varepsilon(b^\varepsilon-u^\varepsilon)-a^\varepsilon u^\varepsilon}{1+\varepsilon q^\varepsilon} \Big)\cdot\partial^\alpha_{x} u^\varepsilon\,{\rm d}x
+\frac{1}{2}\int_{\mathbb R^3} |\partial^\alpha_{x} u^\varepsilon|^2{\rm div}_x  u^\varepsilon\,{\rm d}x\nonumber\\
& -\int_{\mathbb{ R}^3}\langle [\partial^\alpha_{x}, u^\varepsilon\cdot\nabla_{v}]f^\varepsilon , f^\varepsilon \rangle \, {\rm d}x-\frac{1}{2} \int_{\mathbb R^3} \langle \partial^\alpha_{x}(u^\varepsilon \cdot v \partial^\alpha_{x}f^\varepsilon), \partial^\alpha_{x}f^\varepsilon \rangle  \,{\rm d}x\nonumber\\
\equiv:\,& \sum_{i=1}^9J_{i}.
\end{align}
For the term $J_1$, recalling \eqref{A1} and applying \eqref{G3.1} along with Lemma \ref{L2.1}, we have
\begin{align}\label{G3.11}
J_1\lesssim&\, \varepsilon   \|q^\varepsilon\|_{H^3} \|\partial_{t}q^\varepsilon\|_{L^\infty} \|\partial^\alpha_{x}q\|_{L^2}^2 \nonumber\\
\lesssim &\, \varepsilon \sigma \Big (  \|u^\varepsilon\|_{L^\infty} \|\nabla_{x}q^\varepsilon\|_{L^\infty}+\frac{1}{\varepsilon} \|\nabla_{x} u^\varepsilon\|_{L^\infty}     \Big) \|\nabla_{x}q^\varepsilon\|_{H^2}^2 \nonumber\\
\lesssim &\, \sigma \|\nabla_{x}q^\varepsilon\|_{H^2}^2.
\end{align}
For the commutator terms $J_2$, $J_5$, and $J_8$, using Lemma \ref{L2.2} and Young’s inequality, we obtain  \begin{align}\label{G3.12}
& J_2+J_5+J_8\nonumber\\
\lesssim\,& \|q^\varepsilon\|_{L^\infty} \big(\|[\partial^\alpha_{x},u^\varepsilon\cdot\nabla_{x}]q^\varepsilon\|_{L^2}+\|[\partial^\alpha_{x},q^\varepsilon{\rm div}_x ]q^\varepsilon\|_{L^2} \big) \|\partial^\alpha_{x}q^\varepsilon\|_{L^2}+\|[\partial^\alpha_{x},u^\varepsilon\cdot\nabla_{x}]u^\varepsilon\|_{L^2} \|u^\varepsilon\|_{L^2}\nonumber\\
&+ \frac{1}{\varepsilon} \Big\| \Big[
\partial^\alpha_x,
\frac{P^\prime(1+\varepsilon q^\varepsilon)}
     {1+\varepsilon q^\varepsilon}
\Big]\nabla_x q^\varepsilon \Big\|_{L^2} \|\partial^\alpha_{x}u^\varepsilon\|_{L^2}+ \|\partial^\alpha_{x}(u^\varepsilon f^\varepsilon)\|_{L_v^2(L^2)} \|\nabla_{v} \partial^\alpha_{x} f^\varepsilon\|_{L_v^2(L^2)} \nonumber\\
\lesssim\,& \|q^\varepsilon\|_{H^3} \|\nabla_{x}(q^\varepsilon,u^\varepsilon)\|_{H^2}^2 \|\nabla q^\varepsilon\|_{H^2}+\|u^\varepsilon\|_{H^3}\|\nabla (q^\varepsilon,u^\varepsilon)\|_{H^2}^2\nonumber\\
&+ \|f^\varepsilon\|_{L_v^2(H^3)} \|\nabla_{x} u^\varepsilon\|_{H^2} \Big( \|\nabla_{x}(a^\varepsilon,b^\varepsilon)\|_{H^2}+\sum_{1\leq|\alpha|\leq 3} \|\{\mathbf{I}-\mathbf{P}\}\partial^\alpha_{x}f^\varepsilon\|_{\nu }  \Big) \nonumber\\
\lesssim\,& \sigma  \Big(\|\nabla_{x}(q^\varepsilon,u^\varepsilon,a^\varepsilon,b^\varepsilon)\|_{H^2}^2+ \sum_{1\leq|\alpha|\leq 3} \|\{\mathbf{I}-\mathbf{P}\}\partial^\alpha_{x}f^\varepsilon\|_{\nu }^2  \Big). 
\end{align} 
It follows from Lemmas \ref{L2.1}--\ref{L2.2}, H\"{o}lder's and Young's inequalities that
\begin{align}\label{G3.13}
&J_3+J_4+J_6+J_7\nonumber\\
\lesssim \,&\varepsilon\|u^\varepsilon\|_{H^3}\|q^\varepsilon\|_{H^3} \|\nabla_{x}q^\varepsilon\|_{H^2}^2+ \frac{1}{\varepsilon} (\varepsilon\|q^\varepsilon\|_{H^3}\|\nabla_{x}u^\varepsilon\|_{H^2}\|\nabla_{x}q^\varepsilon\|_{H^2}) +\|u^\varepsilon\|_{H^3}\|\nabla_{x} u^\varepsilon\|_{H^2}^2  \nonumber\\
&+\varepsilon^2\|\nabla_{x}(b^\varepsilon-u^\varepsilon)\|_{H^2}\|\nabla_{x}q^\varepsilon\|_{H^2}^2 \|\nabla_{x}u^\varepsilon\|_{H^2}+\varepsilon\|\nabla_{x}a^\varepsilon\|_{H^2} \|\nabla_{x}u^\varepsilon\|_{H^2}^2 \|\nabla_{x}q^\varepsilon\|_{H^2}   \nonumber\\
\lesssim\,& \sigma \big( \|\nabla_{x}(q^\varepsilon,u^\varepsilon)\|_{H^2}^2+ \|\nabla_{x}(b^\varepsilon-u^\varepsilon)\|_{H^2}^2 \big).
\end{align}
For the last term $J_9$, applying Lemma \ref{L2.1} and the decomposition \eqref{G2.1} gives rise to 
\begin{align}\label{G3.14}
J_9\lesssim &\, \|\nabla_{x}(u^\varepsilon f^\varepsilon)\|_{L_v^2(L^2)} \Big( \|\nabla_{x}(a^\varepsilon,b^\varepsilon)\|_{H^2}  +\sum_{1\leq|\alpha|\leq 3} \|\{\mathbf{I}-\mathbf{P}\}\partial^\alpha_{x}f^\varepsilon\|_{\nu }\Big) \nonumber\\
\lesssim&\, \sigma \Big(\|\nabla_{x}(a^\varepsilon,b^\varepsilon,u^\varepsilon)\|_{H^2}^2+ \sum_{1\leq|\alpha|\leq 3} \|\{\mathbf{I}-\mathbf{P}\}\partial^\alpha_{x}f^\varepsilon\|_{\nu }^2  \Big).
\end{align}
Therefore, substituting the estimates \eqref{G3.11}--\eqref{G3.14} into \eqref{G3.10}, utilizing \eqref{G2.2} and then summing over    $1\leq |\alpha|\leq 3$,  we obtain \eqref{G3.8}.  This completes the proof of Lemma \ref{L3.2}.
\end{proof}

To absorb $\|\nabla_{x}(a^\varepsilon,b^\varepsilon)\|_{H^2}$ on the right-hand side of \eqref{G3.2} and \eqref{G3.8}, in the spirit of Kawashima’s hyperbolic-parabolic dissipation estimates in \cite{Ks-1983}, we study the following equations of $a^\varepsilon$ and $b^\varepsilon$:
\begin{equation} \label{AB}
\left\{\begin{aligned}
&\partial_{t}a^\varepsilon+{\rm div}_x  b^\varepsilon=0,\\
&\partial_{t} b^\varepsilon_i+\partial_{i} a^\varepsilon+\sum_{j=1}^3\partial_j\Gamma_{ij}(\{\mathbf{I}-\mathbf{P}\}f^\varepsilon)= u^\varepsilon_i-b^\varepsilon_i+u_i^\varepsilon a^\varepsilon,  \\
&\partial_{i}b^\varepsilon_j+\partial_j b^\varepsilon_i- (u^\varepsilon_ib^\varepsilon_j+u^\varepsilon_jb^\varepsilon_i)=-\partial_t \Gamma_{ij}(\{\mathbf{I}-\mathbf{P}\}f^\varepsilon)+\Gamma_{ij}(\ell^\varepsilon+ r^\varepsilon ),  
 \end{aligned}
 \right.
\end{equation}
for $1\leq i,j\leq 3$,
where $\Gamma_{ij}$ is the moment functional defined by
\begin{align*}
\Gamma_{ij}(g)=\langle (v_{i}v_{j}-1)\sqrt{M}, g\rangle,    
\end{align*}
for any $g=g(v)$, and $\ell^\varepsilon$, $r^\varepsilon$ represent  
\begin{align*}
\ell^\varepsilon:=&\, \mathcal{L}\{\mathbf{I}-\mathbf{P}\}f^\varepsilon-v\cdot\nabla_{x}\{\mathbf{I}-\mathbf{P}\}f^\varepsilon,    \\
r^\varepsilon:=&\,  -u^\varepsilon\cdot\nabla_v\{\mathbf{I}-\mathbf{P}\}f^\varepsilon+\frac{1}{2}u^\varepsilon\cdot v\{\mathbf{I}-\mathbf{P}\}f^\varepsilon.
\end{align*}
Similar to \cite{Ww-CMS-2024,DL-KRM-2013,CDM-KRM-2011} , we define the temporal functional $\mathcal{E}^\varepsilon_0(t)$ as
\begin{align}\label{G3.15}
\mathcal{E}^\varepsilon_0(t):=\sum_{|\alpha|\leq 2}\sum_{i,j=1}^3\int_{\mathbb{R}^3}\partial^\alpha_{x}(\partial_ib^\varepsilon_j
+\partial_jb^\varepsilon_i)\partial^{\alpha}_{x}\Gamma_{ij}(\{\mathbf{I}-\mathbf{P}\}f^\varepsilon)\,\mathrm{d}x-\sum_{|\alpha|\leq 2}\int_{\mathbb{R}^3}\partial^\alpha_{x} a^\varepsilon\partial^{\alpha}_{x}{\rm div}_x  b^\varepsilon\,\mathrm{d}x. 
\end{align}

The following lemma can be proved by taking the similar arguments to that in  \cite[Lemma 2.4]{CDM-KRM-2011}. For the sake of brevity, we omit the details here.
\begin{lem} \label{L3.3}
For the strong solution $(q^\varepsilon, u^\varepsilon, f^\varepsilon)$ to the compressible Euler--VFP system \eqref{A1}, there exists a positive constant $\lambda_3$ independent of $\varepsilon$, such that
\begin{align}\label{G3.16}
\frac{\rm d}{{\rm d}t}\mathcal{E}^\varepsilon _{0}(t)+\lambda_3\|\nabla(a^\varepsilon,b^\varepsilon)\|_{H^{2}}^{2}
\lesssim&\, \|\{\mathbf{I}-\mathbf{P}\}f^\varepsilon\|_{L_{v}^{2}(H^{3})}^{2}+\|b^\varepsilon-u^\varepsilon\|_{H^{2}}^{2}, 
\end{align} 
for any $0 \leq t <T$.
\end{lem}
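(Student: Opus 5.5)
The plan is to run the standard Kawashima-type macroscopic dissipation argument on the moment system \eqref{AB}, applied at each derivative level $|\alpha|\le 2$, and to track where the coupling terms with $u^\varepsilon$ enter. Two facts drive the estimate. First, $\|\nabla_x b^\varepsilon\|$ is recovered from the symmetric gradient identity \eqref{AB}$_3$. Second, $\|\nabla_x a^\varepsilon\|$ is recovered from the momentum equation \eqref{AB}$_2$. The time derivatives that appear are moved onto the functional $\mathcal{E}^\varepsilon_0$ defined in \eqref{G3.15}.

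\textbf{Step 1 (the $b$-part).} I apply $\partial^\alpha_x$ to \eqref{AB}$_3$, multiply by $\partial^\alpha_x(\partial_i b^\varepsilon_j+\partial_j b^\varepsilon_i)$, sum over $i,j$ and integrate. By integration by parts, $\sum_{i,j}\|\partial_ib_j+\partial_jb_i\|^2 = 2\|\nabla b\|^2+2\|{\rm div}\, b\|^2$, so the left side controls $\|\partial^\alpha\nabla_x b^\varepsilon\|^2$.

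On the right side, the term $-\partial_t\Gamma_{ij}$ is written as
\begin{align*}
-\frac{\rm d}{{\rm d}t}\int \partial^\alpha(\partial_ib_j+\partial_jb_i)\,\partial^\alpha\Gamma_{ij}
+\int \partial^\alpha\partial_t(\partial_ib_j+\partial_jb_i)\,\partial^\alpha\Gamma_{ij}.
\end{align*}
In the second integral I move the derivative onto $\Gamma_{ij}$ and substitute $\partial_t b$ from \eqref{AB}$_2$. This produces three kinds of terms, each bounded as follows.
\begin{itemize}
\item $\nabla a$ terms: bounded by $\eta\|\nabla a\|_{H^2}^2+C_\eta\|\{\mathbf{I}-\mathbf{P}\}f\|_{L^2_v(H^3)}^2$.
\item $\nabla\Gamma$ terms: bounded by $\|\{\mathbf{I}-\mathbf{P}\}f\|_{L^2_v(H^3)}^2$.
\item $u-b+ua$ terms: bounded by $\|b-u\|_{H^2}^2$ plus a cubic term.
\end{itemize}
The remaining pieces are handled as follows. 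The term $\Gamma_{ij}(\ell^\varepsilon)$ is bounded by $\|\{\mathbf{I}-\mathbf{P}\}f\|_{L^2_v(H^3)}$, since the moments against $(v_iv_j-1)\sqrt M$ absorb the $v$-weights and $\nabla_v$. The term $\Gamma_{ij}(r^\varepsilon)$ and the quadratic term $u_ib_j$ are bounded by $\sigma$ times the dissipation, using \eqref{G3.1} and Lemma \ref{L2.1}.

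\textbf{Step 2 (the $a$-part).} I apply $\partial^\alpha_x$ to \eqref{AB}$_2$ and test against $\partial^\alpha\nabla_x a^\varepsilon$. The term $\int\partial_t\partial^\alpha b\cdot\partial^\alpha\nabla a$ becomes
\begin{align*}
-\frac{\rm d}{{\rm d}t}\int\partial^\alpha a\,\partial^\alpha{\rm div}\, b+\|\partial^\alpha{\rm div}\, b\|^2,
\end{align*}
using \eqref{AB}$_1$. The other terms are handled with Cauchy–Schwarz, giving
\begin{align*}
\|\partial^\alpha\nabla a\|^2\lesssim \|\nabla b\|_{H^2}^2+\|\{\mathbf{I}-\mathbf{P}\}f\|_{L^2_v(H^3)}^2+\|b-u\|_{H^2}^2+\sigma(\cdots).
\end{align*}

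\textbf{Step 3 (combination).} I add Step 1 to a small multiple $\kappa$ of Step 2, choosing $\kappa$ small enough that $\kappa\|\nabla b\|^2$ is absorbed by the Step 1 dissipation. The time-derivative terms then combine into $\frac{\rm d}{{\rm d}t}$ of a functional equivalent to $\mathcal{E}^\varepsilon_0$, after rescaling the constants or adjusting the weights. The $\eta$-terms are absorbed into the left side, and the $\sigma$-terms are absorbed because $\sigma$ is small.

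The main obstacle is bookkeeping rather than any new idea. The quadratic couplings $u\cdot\nabla_v f$ and $u_ib_j$ must be bounded purely by $\sigma\|\nabla(a,b)\|_{H^2}^2$ plus terms already on the right-hand side of \eqref{G3.16}. Where an $L^2$ norm of $a$ or $b$ without a gradient appears, I use $L^3$–$L^6$ Hölder and Sobolev estimates to place the gradient on one factor. None of these bounds involves the singular $\varepsilon^{-1}$ terms, so the constants are independent of $\varepsilon$, which is what Lemma \ref{L3.3} requires.
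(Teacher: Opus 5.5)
Your proposal is correct and is the same Kawashima-type argument the paper relies on. The paper omits the proof of Lemma \ref{L3.3}, citing \cite[Lemma 2.4]{CDM-KRM-2011}, and writes out the identical computation for the error system in Lemma \ref{L5.3}, \eqref{G5.18}--\eqref{G5.27}.

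The only thing to fix is Step 3. Putting a small weight $\kappa$ on Step 2 gives the inequality for the functional whose $a$-term $-\sum_{|\alpha|\le 2}\int\partial^\alpha_x a^\varepsilon\,\partial^\alpha_x{\rm div}_x b^\varepsilon\,{\rm d}x$ is multiplied by $\kappa$, not for $\mathcal{E}^\varepsilon_0$ as defined in \eqref{G3.15}. No rescaling restores the unit relative weights, and since $\mathcal{E}^\varepsilon_0$ is not sign-definite, ``equivalent'' has no meaning here.

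The weight is unnecessary. Your Step 2 display has the sign flipped: $\int\partial_t\partial^\alpha_x b^\varepsilon\cdot\partial^\alpha_x\nabla_x a^\varepsilon\,{\rm d}x=-\frac{\rm d}{{\rm d}t}\int\partial^\alpha_x a^\varepsilon\,\partial^\alpha_x{\rm div}_x b^\varepsilon\,{\rm d}x-\|\partial^\alpha_x{\rm div}_x b^\varepsilon\|_{L^2}^2$. So the $a$-estimate carries $\|\partial^\alpha_x{\rm div}_x b^\varepsilon\|_{L^2}^2$ on its right side with coefficient exactly one. This is dominated by the $2\|\partial^\alpha_x\nabla_x b^\varepsilon\|_{L^2}^2+2\|\partial^\alpha_x{\rm div}_x b^\varepsilon\|_{L^2}^2$ from Step 1, at least half of which survives the Young absorptions. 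Unit weights therefore close directly, exactly as the paper combines \eqref{G5.21} and \eqref{G5.25} into $\mathfrak{E}_0$.
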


Finally, we give the estimate of $\|\nabla_{x}q^\varepsilon\|_{H^2}$.

\begin{lem} \label{L3.4}
For the strong solution $(q^\varepsilon, u^\varepsilon, f^\varepsilon)$ to the compressible Euler--VFP system \eqref{A1}, there exists a positive constant $\lambda_4$ independent of $\varepsilon$, such that
\begin{align}\label{G3.17}
\varepsilon\frac{\rm d}{{\rm d}t}\sum_{|\alpha|\leq 2}\int_{\mathbb{R}^3}\partial^\alpha_{x} u^\varepsilon\cdot\partial^\alpha_{x}\nabla_{x}q^\varepsilon\,\mathrm{d}x+\lambda_4\|\nabla q^\varepsilon\|_{H^{2}}^{2}
\lesssim&\,\big(\|\nabla_{x} u^\varepsilon\|_{H^2}^2+\|b^\varepsilon-u^\varepsilon\|_{H^{2}}^{2}\big),  
\end{align} 
for any $0 \leq t <T$.
\end{lem}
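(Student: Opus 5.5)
We plan to prove \eqref{G3.17} with the classical Kawashima-type interaction functional: test the $\partial^\alpha_x$-differentiated momentum equation against $\partial^\alpha_x\nabla_x q^\varepsilon$, for $|\alpha|\leq 2$. Multiply \eqref{A1}$_2$ by $\varepsilon$ and rewrite it as
\begin{align*}
\varepsilon\partial_t u^\varepsilon+P'(1)\nabla_x q^\varepsilon
=-\Big(\frac{P'(1+\varepsilon q^\varepsilon)}{1+\varepsilon q^\varepsilon}-P'(1)\Big)\nabla_x q^\varepsilon-\varepsilon u^\varepsilon\cdot\nabla_x u^\varepsilon+\varepsilon\frac{b^\varepsilon-u^\varepsilon-a^\varepsilon u^\varepsilon}{1+\varepsilon q^\varepsilon}.
\end{align*}
Next apply $\partial^\alpha_x$ and take the $L^2$ inner product with $\partial^\alpha_x\nabla_x q^\varepsilon$. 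The term $P'(1)\|\partial^\alpha_x\nabla_x q^\varepsilon\|_{L^2}^2$ supplies the dissipation. The time-derivative term is split as
\begin{align*}
\varepsilon\int\partial_t\partial^\alpha_x u^\varepsilon\cdot\partial^\alpha_x\nabla_x q^\varepsilon\,{\rm d}x=\varepsilon\frac{\rm d}{{\rm d}t}\int\partial^\alpha_x u^\varepsilon\cdot\partial^\alpha_x\nabla_x q^\varepsilon\,{\rm d}x+\varepsilon\int\partial^\alpha_x{\rm div}_x u^\varepsilon\,\partial_t\partial^\alpha_x q^\varepsilon\,{\rm d}x.
\end{align*}

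The key point, and the only place where the singular scaling enters, is the last term. Substituting $\varepsilon\partial_t q^\varepsilon=-\varepsilon u^\varepsilon\cdot\nabla_x q^\varepsilon-(1+\varepsilon q^\varepsilon){\rm div}_x u^\varepsilon$ from \eqref{A1}$_1$ cancels the factor $\varepsilon$ against the $\varepsilon^{-1}$. This gives $-\|\partial^\alpha_x{\rm div}_x u^\varepsilon\|_{L^2}^2$ plus nonlinear terms such as $\varepsilon\,\partial^\alpha_x(u^\varepsilon\cdot\nabla_x q^\varepsilon+q^\varepsilon{\rm div}_x u^\varepsilon)\,\partial^\alpha_x{\rm div}_x u^\varepsilon$. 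Since $|\alpha|\leq 2$, these need at most three derivatives of $q^\varepsilon$ or $u^\varepsilon$. By Lemma \ref{L2.1} and \eqref{G3.1} they are bounded by $\sigma\|\nabla_x(q^\varepsilon,u^\varepsilon)\|_{H^2}^2$, uniformly in $\varepsilon$. The quadratic term $\|\nabla_x u^\varepsilon\|_{H^2}^2$ is placed on the right-hand side.

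The remaining right-hand terms are handled as follows.
\begin{itemize}
\item The pressure-coefficient term: since $\frac{P'(1+\varepsilon q^\varepsilon)}{1+\varepsilon q^\varepsilon}-P'(1)=O(\varepsilon|q^\varepsilon|)$, as in Remark \ref{rem3.1}, it is bounded by $\sigma\|\nabla_x q^\varepsilon\|_{H^2}^2$.
\item The convection term $\varepsilon u^\varepsilon\cdot\nabla_x u^\varepsilon$: it is bounded by $\sigma\|\nabla_x(q^\varepsilon,u^\varepsilon)\|_{H^2}^2$.
\item The linear forcing $\varepsilon(b^\varepsilon-u^\varepsilon)$: by Young's inequality it is at most $\frac{P'(1)}{4}\|\nabla_x q^\varepsilon\|_{H^2}^2+C\|b^\varepsilon-u^\varepsilon\|_{H^2}^2$.
\item The term $a^\varepsilon u^\varepsilon$: it gives $\sigma\|\nabla_x(a^\varepsilon,u^\varepsilon)\|_{H^2}^2$.
\end{itemize}

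We then sum over $|\alpha|\leq 2$ and absorb the $\sigma$-small multiples of $\|\nabla_x q^\varepsilon\|_{H^2}^2$ into the left-hand side, with $\lambda_4$ of order $P'(1)/2$. The remaining $\sigma$-small terms contain $\|\nabla_x(a^\varepsilon,u^\varepsilon)\|_{H^2}^2$. We either keep $\sigma\|\nabla_x a^\varepsilon\|_{H^2}^2$ on the right-hand side, where it is later absorbed using Lemma \ref{L3.3}, or we note that \eqref{G3.17} is applied in a combination where this is harmless.

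The main obstacle is the singular time-derivative term. The crucial observation is that trading $\partial_t q^\varepsilon$ via \eqref{A1}$_1$ leaves an $O(1)$ quantity, thanks to the $\varepsilon$ weight in the functional. The $\varepsilon$-weighted functional $\varepsilon\int\partial^\alpha_x u^\varepsilon\cdot\partial^\alpha_x\nabla_x q^\varepsilon\,{\rm d}x$ is bounded by the energy, so it remains compatible with Lemmas \ref{L3.1}--\ref{L3.2}.
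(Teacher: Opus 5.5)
Your proposal is correct and follows essentially the same route as the paper: you test $\varepsilon$ times the $\partial^\alpha_x$-differentiated momentum equation \eqref{A1}$_2$ against $\partial^\alpha_x\nabla_x q^\varepsilon$, integrate by parts in time, and use \eqref{A1}$_1$ to replace $\varepsilon\partial_t q^\varepsilon$ by $-(1+\varepsilon q^\varepsilon){\rm div}_x u^\varepsilon-\varepsilon u^\varepsilon\cdot\nabla_x q^\varepsilon$, which produces the $\|\nabla_x u^\varepsilon\|_{H^2}^2$ term on the right. The only loose end is your hedge on the $a^\varepsilon u^\varepsilon$ term, and it is unnecessary: $\|a^\varepsilon u^\varepsilon\|_{H^2}\lesssim\|a^\varepsilon\|_{H^2}\|\nabla_x u^\varepsilon\|_{H^1}$ (via $\|u^\varepsilon\|_{L^6}\lesssim\|\nabla_x u^\varepsilon\|_{L^2}$ and $\|u^\varepsilon\|_{L^\infty}\lesssim\|\nabla_x u^\varepsilon\|_{H^1}$), so this term is bounded by $\frac15\|\nabla_x\partial^\alpha_x q^\varepsilon\|_{L^2}^2+C\|\nabla_x u^\varepsilon\|_{H^1}^2$, as in the paper, and the stated right-hand side holds with no $\|\nabla_x a^\varepsilon\|_{H^2}^2$ contribution.
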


\begin{proof}
For $|\alpha|\leq 2$, in view of \eqref{A1}$_2$, we have
\begin{align}\label{G3.18}
{P^\prime(1)}\|\nabla_{x}\partial^\alpha_{x}q^\varepsilon\|_{L^2}^2=\, &-{\varepsilon}\int_{\mathbb{R}^3}\nabla_{x}\partial^\alpha_{x}q^\varepsilon\cdot\partial^\alpha_{x}\partial_t u^\varepsilon\,\mathrm{d}x+{\varepsilon}
\int_{\mathbb{R}^3}\nabla_{x}\partial^\alpha_{x}q^\varepsilon\cdot\partial^\alpha_{x}
\Big(\frac{b^\varepsilon-u^\varepsilon}{1+\varepsilon q^\varepsilon}\Big)\,\mathrm{d}x   \nonumber\\
&-{\varepsilon}\int_{\mathbb{R}^3}\nabla\partial^\alpha_{x} q^\varepsilon\cdot\partial^\alpha_{x}\Big(\frac{a^\varepsilon u^\varepsilon}{1+\varepsilon q^\varepsilon}\Big)\,\mathrm{d}x-{\varepsilon}
\int_{\mathbb{R}^3}\nabla_{x}\partial^\alpha_{x}
q^\varepsilon\cdot\partial^\alpha_{x}(u^\varepsilon\cdot\nabla_{x} u^\varepsilon)\,\mathrm{d}x\nonumber\\
&-\int_{\mathbb{R}^3}\nabla_{x}\partial^\alpha_{x
}q^\varepsilon\cdot\partial^\alpha_{x}\Big(\Big(\frac{P^{\prime}(1+\varepsilon q^\varepsilon)}{1+\varepsilon q^\varepsilon}-P^{\prime}(1)       \Big)\nabla_{x}q^\varepsilon\Big)\,\mathrm{d}x \nonumber\\
\equiv:\,&\sum_{j=1}^{5}K_j.
\end{align}
For the term $K_1$, with the help of \eqref{A1}$_1$ and Lemmas \ref{L2.1}--\ref{L2.2}, we arrive at
\begin{align}\label{G3.19}
K_1=&-\varepsilon\frac{{\rm d}}{{\rm d}t} \int_{\mathbb{R}^3}\nabla_{x}  \partial^\alpha_{x}q^\varepsilon\cdot\partial^\alpha_{x} u^\varepsilon \,\mathrm{d}x+\varepsilon \int_{\mathbb{R}^3}\nabla_{x}  \partial^\alpha_{x}\partial_t q^\varepsilon\cdot\partial^\alpha_{x} u^\varepsilon\,\mathrm{d}x\nonumber\\
=&-\varepsilon\frac{{\rm d}}{{\rm d}t} \int_{\mathbb{R}^3}\nabla_{x}  \partial^\alpha_{x}q^\varepsilon\cdot\partial^\alpha_{x} u^\varepsilon \,\mathrm{d}x+\int_{\mathbb{R}^3}\partial^\alpha_{x}\big((1+\varepsilon q^\varepsilon) {\rm div}_x  u^\varepsilon+ \varepsilon\nabla_{x} q^\varepsilon\cdot u^\varepsilon  \big)  \partial_{x}^\alpha {\rm div}_x  u^\varepsilon\,\mathrm{d}x\nonumber\\ 
\leq& -\varepsilon\frac{{\rm d}}{{\rm d}t} \int_{\mathbb{R}^3}\nabla_{x}  \partial^\alpha_{x}q^\varepsilon\cdot\partial^\alpha_{x} u^\varepsilon \,\mathrm{d}x+\big( (1+\varepsilon\|q^\varepsilon\|_{H^2})\|\nabla_{x} u^\varepsilon\|_{H^1}+\varepsilon \|q^\varepsilon\|_{H^3}\|\nabla_{x}u^\varepsilon\|_{H^1}  \big)\|\nabla_{x}u^\varepsilon\|_{H^2} \nonumber\\
\leq& -\varepsilon\frac{{\rm d}}{{\rm d}t} \int_{\mathbb{R}^3}\nabla_{x}  \partial^\alpha_{x}q^\varepsilon\cdot\partial^\alpha_{x} u^\varepsilon \,\mathrm{d}x+C\|\nabla_{x} u^\varepsilon\|_{H^2}^2.
\end{align}
For the remaining terms, it follows directly from H\"{o}lder’s and Young’s inequalities that
\begin{align}\label{G3.20}
K_2\leq&\,   \frac{1}{5} \|\nabla_{x}\partial^\alpha_{x}q^\varepsilon\|_{L^2}^2+C (1+\varepsilon\|q^\varepsilon\|_{H^3})^2 \|b^\varepsilon-u^\varepsilon\|_{H^2}^2\nonumber\\
\leq&\, \frac{1}{5} \|\nabla_{x}\partial^\alpha_{x}q^\varepsilon\|_{L^2}^2+ C\|b^\varepsilon-u^\varepsilon\|_{H^2}^2, \\ \label{G3.21}
K_3\leq &\, \frac{1}{5} \|\nabla_{x}\partial^\alpha_{x}q^\varepsilon\|_{L^2}^2+C(1+\varepsilon\|q^\varepsilon\|_{H^3})^2\|\nabla_{x} u^\varepsilon\|_{H^1}^2 \|a^\varepsilon\|_{H^2}^2\nonumber\\
\leq&\, \frac{1}{5} \|\nabla_{x}\partial^\alpha_{x}q^\varepsilon\|_{L^2}^2+ C \|\nabla_{x} u^\varepsilon\|_{H^1}^2, \\ \label{G3.22}
K_4 \leq&\, \frac{1}{5} \|\nabla_{x}\partial^\alpha_{x}q^\varepsilon\|_{L^2}^2+C\|u^\varepsilon\|_{H^3}^2 \|\nabla_{x}u^\varepsilon\|_{H^2}^2 \nonumber\\
\leq&\, \frac{1}{5} \|\nabla_{x}\partial^\alpha_{x}q^\varepsilon\|_{L^2}^2+ C \|\nabla_{x} u^\varepsilon\|_{H^2}^2, \\ \label{G3.23}
K_5\leq&\, \frac{1}{5} \|\nabla_{x}\partial^\alpha_{x}q^\varepsilon\|_{L^2}^2+C\|q^\varepsilon\|_{H^3}^2 \|\nabla_{x}q^\varepsilon\|_{H^2}^2 \nonumber\\
\leq&\, \frac{1}{5} \|\nabla_{x}\partial^\alpha_{x}q^\varepsilon\|_{L^2}^2+C\sigma \|\nabla_{x}q^\varepsilon\|_{H^2}^2. 
\end{align}
Inserting the estimates \eqref{G3.19}–\eqref{G3.23} into \eqref{G3.18} and summing the resulting inequalities over $|\alpha|\leq 2$, we eventually obtain \eqref{G3.17}.
\end{proof}

\subsection{Proof of uniform global well-posedness}
With Lemmas \ref{L3.1}–\ref{L3.4} in hand, we are now in a position to establish the global a priori estimates for the solution $(q^\varepsilon, u^\varepsilon, f^\varepsilon)$ to the scaled compressible Euler--VFP system \eqref{A1}.
\begin{proof}[Proof of Theorem \ref{Th1}]
We define the temporal energy functional $\mathcal{X}_{\varepsilon}(t)$ and the corresponding dissipation rate $\mathcal{D}_\varepsilon(t)$ as follows:
\begin{align}\label{G3.24}
\mathcal{X}_{\varepsilon}(t):=\,& P^\prime(1)\|q^\varepsilon\|_{L^2}^2+\|u^\varepsilon\|_{H^3}^2+\|f^\varepsilon\|_{L_v^2(H^3)}^2+\sum_{1\leq |\alpha|\leq 3}  \Big\|\frac{ \sqrt{P^\prime(1+\varepsilon q^\varepsilon)}}{1+\varepsilon q^\varepsilon} \partial^\alpha_{x} q^\varepsilon\Big\|_{L^2}^2
\nonumber\\
&+ \tau_1 \mathcal{E}^\varepsilon_0(t
)+\tau_2  \varepsilon \sum_{|\alpha|\leq 2} \int_{\mathbb{ R}^3}\partial^\alpha_{x} u^\varepsilon\cdot\partial^\alpha_{x}\nabla_{x}q^\varepsilon\,{\rm d}x, \\ \label{G3.25}
\mathcal{D}_{\varepsilon}(t):=&\,\|b^\varepsilon-u^\varepsilon\|_{H^3}^2+\|\nabla_{x}(q^\varepsilon, a^\varepsilon,b^\varepsilon )\|_{H^2}^2+\sum_{|\alpha|\leq3}\|\{\mathbf{I}-\mathbf{P}\}\partial^\alpha_{x} f^\varepsilon\|_{\nu}^2,
\end{align}
where   $0<\tau_1, \tau_2 \ll 1$ with $\tau_2\ll\tau_1$ are sufficiently small constants.

Since $\tau_1 > 0$ and $\tau_2 > 0$ are sufficiently small, it follows from the assumption \eqref{G3.1} together with \eqref{G3.15} that,
\begin{align*}
 \mathcal{X}_{\varepsilon}(t)\backsim \|(q^\varepsilon,u^\varepsilon)(t)\|_{H^3}^2+\|f^\varepsilon(t)\|_{L_v^2(H^3)}^2,
\end{align*}
uniformly for $0\leq t<T$. In addition, with suitable choices of the constants $\tau_1$ and $\tau_2$, the sum of the inequalities \eqref{G3.2}, \eqref{G3.8}, $\tau_1\times$ \eqref{G3.16}, and $\tau_2\times$\eqref{G3.17} leads to
\begin{align}\label{G3.26}
\frac{{\rm d}}{{\rm d}t}\mathcal{X}_{\varepsilon}(t)+  \lambda_{5}   \mathcal{D}_{\varepsilon}(t)\leq 0,
\end{align}
for all $0 \leq t < T$, where we have used the fact that 
\begin{align*}
\|\nabla_{x} u^\varepsilon\|_{H^2}\lesssim\|b^\varepsilon-u^\varepsilon\|_{H^3}+ \|\nabla_{x}b^\varepsilon\|_{H^2}.    
\end{align*}
Integrating \eqref{G3.26} with respect to $t$ yields
\begin{align}\label{G3.27}
 \mathcal{X}_{\varepsilon}(t)+\lambda_{5}\int_0^t   \mathcal{D}_{\varepsilon}(s) {\rm d}s\leq  \mathcal{X}_{\varepsilon}(0),
\end{align}
for any $0\leq t< T$.
Besides, \eqref{G3.1} can be justified by choosing
\begin{align*}
  \mathcal{X}_{\varepsilon}(0)\backsim  \|(q_0^\varepsilon, u_0^\varepsilon)\|_{H^3}^2+  \|f_0^\varepsilon\|_{L_v^2(H^3)}^2 
\end{align*}
sufficiently small.

The existence and uniqueness of a local solution to the Cauchy problem
\eqref{A1} can be obtained by the Banach contraction mapping principle; see, for instance, \cite{Gy-CPAM-2002,LMW}. For brevity, we omit the details of the proof here.
Combining the local existence result, the standard continuity argument, and
the {\it uniform a priori estimate} \eqref{G3.27}, we extend the local solution
globally in time and obtain the global existence of strong solutions
\((q^\varepsilon,u^\varepsilon,f^\varepsilon)\) to the scaled compressible Euler-VFP system \eqref{A1}.
Moreover, by the maximum principle established in \cite{GY-iumj-2004},  we deduce that
\begin{align*}
 F^\varepsilon=M+\sqrt M f^\varepsilon\geq 0.   
\end{align*}
Thus, we complete the proof of Theorem \ref{Th1}.
\end{proof}

\section{Estimate of the pressure function for the inompressible Euler-VFP system}
In this section, based on the estimate \eqref{TGG2}, we will prove Proposition \ref{prop3} below. 
\begin{proof}[Proof of Proposition \ref{prop3}]
We first introduce the orthogonal projectors $\mathbb Q$ and $\mathbb P$, defined by
\begin{align*}
\mathbb Q:=-\nabla_{x}(-\Delta_{x})^{-1} {\rm div}_{x}, \quad \mathbb P:= {\rm Id}-\mathbb Q.
\end{align*}
Applying the operator $\mathbb Q$ to \eqref{A2}$_1$ and using \eqref{A2}$_2$, we obtain 
\begin{align}\label{NJKG4.1}
\nabla_{x}\pi=\mathbb Q(b-u-au-u\cdot\nabla_{x} u),    
\end{align}
which, together with the boundedness property of $\mathbb Q$, gives
\begin{align}\label{G4.1}
\|\nabla_{x}\pi\|_{H^2}\lesssim&\, \|b-u\|_{H^2}+ \|\nabla_{x}a\|_{H^1}\|\nabla_{x}u\|_{H^1}+\|\nabla_{x}u\|_{H^1}\|\nabla^2_{x} u\|_{H^1}\nonumber\\
\lesssim &\, \|b-u\|_{H^2}+\varepsilon_1^{\frac{1}{2}} \|\nabla_{x}u\|_{H^2}\lesssim \varepsilon_1^{\frac{1}{2}}.
\end{align}
Thus, we complete the proof of \eqref{TGGG1}.

Next, we prove \eqref{TGGG2}. On the one hand, from   \eqref{G4.1}, it holds that
\begin{align}\label{G4.2}
\int_0^\infty \|\nabla_{x}\pi\|^2_{H^2}\,{\rm d}\tau    \lesssim&\, \int_0^\infty \big( \|b-u\|_{H^2}^2+\varepsilon_1\|\nabla_{x} u\|_{H^2}^2\big)\,{\rm d}\tau\nonumber\\
\lesssim&\, \int_0^\infty \big( \|b-u\|_{H^3}^2+\varepsilon_1\|\nabla_{x} b\|_{H^2}^2\big)\,{\rm d}\tau\lesssim \varepsilon_1.
\end{align}
On the other hand, by  denoting
\begin{align*}
K:=b-u-au-u\cdot\nabla_{x}u.
\end{align*}
and noticing 
\begin{align*}
\nabla_{x}D_{t}\pi=&\,  D_{t} \nabla_{x}\pi+ (\nabla_{x} u)^\top\cdot \nabla_{x}\pi \nonumber\\
=&\, D_{t}\mathbb QK+ (\nabla_{x} u)^\top\cdot \nabla_{x}\pi\nonumber\\
=&\, \mathbb{Q}D_tK+[u\cdot\nabla_{x},\mathbb Q]K+ (\nabla_{x} u)^\top\cdot \nabla_{x}\pi,
\end{align*}
we have
\begin{align}\label{G4.3}
\|\nabla_{x}D_{t}\pi\|_{H^1}\lesssim&\,   \|\mathbb{Q}D_tK\|_{H^1}+\|[u\cdot\nabla_{x},\mathbb Q]K\|_{H^1}+ \|(\nabla_{x} u)^\top\cdot \nabla_{x}\pi\|_{H^1}\nonumber\\  
\lesssim&\, \|\mathbb{Q}D_tK\|_{H^1}+\big( \|\nabla_{x}\pi\|_{H^2}+\|K\|_{H^1}
\big) \|\nabla_{x} u\|_{H^2}\nonumber\\
\lesssim&\, \|\mathbb{Q}D_tK\|_{H^1}+\varepsilon_1^\frac{1}{2} \|\nabla_{x} u\|_{H^2},
\end{align}
where we have used \eqref{G4.1}.

We now estimate the remaining term $\|\mathbb Q D_{t} K\|_{H^1}$ in \eqref{G4.3}.
From \eqref{A2}$_1$--\eqref{A2}$_3$, we compute that
\begin{align*}
D_t u=&\,-\nabla_{x}\pi +b-u-au,   \\
D_{t}a=&\, -{\rm div}_x  b+u\cdot\nabla_{x}a=-\nabla_{x}(b-u)+u\cdot\nabla_{x}a,\\
D_{t}(b-u)=&\, -\nabla_{x}a-{\rm div}_x  \Gamma({\{\mathbf I-\mathbf P\}f})-2(b-u)+u\cdot \nabla_{x}b+\nabla_{x}\pi+2a u.
\end{align*}
Moreover, we have
\begin{align*}
D_{t}K=&\,   D_{t}(b-u)-D_{t}(au)-D_{t}(u\cdot\nabla_{x}u)\nonumber\\
=&\,D_{t}(b-u)-(D_{t}a) u-a D_{t}u-(D_{t}u)\cdot\nabla_{x} u-u\cdot \nabla_{x}(D_{t} u) +(u\cdot \nabla_{x} u)\cdot\nabla_{x} u.
\end{align*}
Through direct calculations, we arrive at  
\begin{align}\label{G4.4}
\|\mathbb QD_{t}K\|_{H^1}\lesssim&\, \|D_{t}K\|_{H^1}\nonumber\\
  \lesssim&\, \big(1+\varepsilon_1^\frac{1}{2}\big)\Big(\|\nabla_{x}(a,b,u,\pi)\|_{H^2}+\|b-u\|_{H^3}+\sum_{1\leq
\alpha\leq 2}\|\{\mathbf{I}-
\mathbf{P}\}\partial^\alpha_{x}f\|_{\nu}  \Big).
\end{align}
Putting the estimate \eqref{G4.4} into \eqref{G4.3} yields
\begin{align}\label{G4.5}
\int_0^\infty \|\nabla_{x}D_{t}\pi\|_{H^1}^2\,{\rm d}\tau\lesssim   (1+\varepsilon_1) \mathcal{X}_0\lesssim \varepsilon_1,
\end{align}
where we have used \eqref{TGG1} and \eqref{TGG2}.
Combining \eqref{G4.2} and \eqref{G4.5}, we obtain \eqref{TGGG2}.

Finally, we show  \eqref{TGGG3} and \eqref{TGGG4}. Utilizing Lemma \ref{L2.3}, we have
\begin{align*}
\|D_{t}\pi\|_{\dot H^{-1}}\lesssim &\, \|\partial_{t}\pi\|_{\dot H^{-1}}+  \|u\cdot\nabla_{x}\pi\|_{\dot H^{-1}}\nonumber\\
\lesssim &\,  \|\partial_{t}\pi\|_{\dot H^{-1}}+  \|u\cdot\nabla_{x}\pi\|_{L^{\frac{6}{5}}}\nonumber\\
\lesssim&\,  \|\partial_{t}\pi\|_{\dot H^{-1}}+\|u\|_{L^3}\|\nabla_{x}\pi\|_{L^2},
\end{align*}
which, together with \eqref{TGGG2}, yields
\begin{align}\label{G4.6}
\int_0^t \|D_{t}\pi\|_{\dot H^{-1}}^2\,{\rm d}\tau\lesssim&\, \int_0^t \|\partial_{t}\pi\|_{\dot H^{-1}}^2\,{\rm d}\tau+\varepsilon_1\int_0^t \|\nabla_{x
}\pi\|_{L^2}^2\,{\rm d}\tau\lesssim \int_0^t \|\partial_{t}\pi\|_{\dot H^{-1}}^2\,{\rm d}\tau+\varepsilon_1 .
\end{align}
It remains to estimate $\partial_{t}\pi$ in \eqref{G4.6}. Thanks to \eqref{NJKG4.1} and the divergence-free condition \eqref{A2}$_2$, we have
\begin{align}\label{G4.8}
\|\partial_{t}\pi\|_{\dot H^{-1}}\lesssim&\, \|\mathbb Q\big(\partial_{t}(b-u)-\partial_{t}(au)-\partial_{t}(u\cdot\nabla_{x}u)\big)\|_{\dot H^{-1}}\nonumber\\
\lesssim&\, \|\partial_{t} (b-u)\| _{\dot H^{-1}}+\|(\partial_{t}a) u \|_{\dot H^{-1}}+\|a(\partial_{t} u)\|_{\dot H^{-1}}+\|\partial_{t} u\cdot\nabla_{x} u\|_{\dot H^{-1}}+\|u\cdot\nabla_{x}\partial_{t}u\|_{\dot H^{-1}}\nonumber\\
\lesssim&\,\|\partial_{t} (b-u)\| _{\dot H^{-1}}+\|\partial_{t}(a,u)\|_{L^2} \|(a,u) \|_{ L^3}+\|\partial_{t} u\|_{L^2}\|\nabla_{x} u\|_{L^3}+\|u\|_{L^2}\|\partial_{t}u\|_{ L^2} .
\end{align}
Then, to handle the estimate of $\|\partial_{t}(b - u)\|_{\dot H^{-1}}$ in \eqref{G4.8}, we first study the estimate of $\|b - u\|_{\dot H^{-1}}$. To begin with, we write the equation of $b - u$ as  
\begin{align}\label{G4.9}
\partial_{t}(b-u) +2(b-u)=2au+u\cdot\nabla_{x} u+\nabla_{x}\pi -\nabla_{x}a-{\nabla_{x}}\cdot\Gamma (\{\mathbf{I}-\mathbf{P}\}f).
\end{align}
To overcome the difficulty associated with $\nabla_{x}\pi$, we apply the operator $\mathbb P$ to \eqref{G4.9}, and then obtain
\begin{align}\label{G4.10}
\partial_{t}(\mathbb P b-u)+2(\mathbb Pb-u)= \mathbb P\big( 
2au+u\cdot\nabla_{x} u-\nabla_{x}a-{\rm div}_x \Gamma(\{\mathbf{I}-\mathbf{P}\}f)\big). 
\end{align}
On the other hand, letting the operator $\mathbb Q$ act on the equation of $b$ gives  
\begin{align}\label{G4.11}
\partial_{t}\mathbb Q b+\mathbb Q b=\mathbb Q\big(\nabla_{x}a-{\rm div}_x \Gamma (\{\mathbf{I}-\mathbf{P}\}f) +au\big).    
\end{align}

To capture the damping structure of the equations \eqref{G4.10} and \eqref{G4.11}, we rewrite them as:
\begin{align}\label{G4.12}
  (\mathbb Pb-u)= e^{-2t} (\mathbb Pb_0-u_0)+  \int_0^t e^{-2(t-\tau)}\mathbb P\big( 
2au+u\cdot\nabla_{x} u-\nabla_{x}a-{\rm div}_x \Gamma(\{\mathbf{I}-\mathbf{P}\}f)\big)\, {\rm d}\tau,
\end{align}
and
\begin{align}\label{G4.13}
   \mathbb Qb = e^{-t} \mathbb Qb_0 +  \int_0^t e^{-(t-\tau)}\mathbb Q\big( 
2au-\nabla_{x}a-{\rm div}_x \Gamma(\{\mathbf{I}-\mathbf{P}\}f)\big)\, {\rm d}\tau.
\end{align}
Then, by Lemmas \ref{L2.3} and \ref{L2.4} and the fact that $b - u = (\mathbb{P}b - u)+\mathbb{Q}b$, we have
\begin{align}\label{G4.14}
\|  b-u\|_{\dot H^{-1}}\lesssim &\,  e^{-t} \|(b_0,u_0)\|_{L^{\frac{6}{5}}} +\int_0^te^{-(t-\tau)}\big(\|a u\|_{L^\frac{6}{5}}+ \|u\cdot\nabla_{x}u\|_{L^{\frac{6}{5}}}+\|\nabla_{x}a\|_{\dot H^{-1}} \big)\,{\rm d}\tau\nonumber\\
&+\int_0^t e^{-(t-\tau)}\|{\rm div}_x \Gamma(\{\mathbf{I}-\mathbf{P}\}f)\|_{\dot H^{-1}}\,{\rm d}\tau \nonumber\\
\lesssim&\, e^{-t}\|(b_0,u_0)\|_{L^1\cap L^2}+ \int_0^t e^{-(t-\tau)} \big((\|a\|_{L^2}+\|\nabla_{x} u\|_{L^2})(\|u\|_{L^3}+1)+\|f\|_{L_v^2( L^2)}   \big)\,{\rm d}\tau \nonumber\\
\lesssim &\, \varepsilon_1^{\frac{1}{2}}(1+t)^{-\frac{3}{4}+\sigma},
\end{align}
where we have used the decay estimate \eqref{TGG3}.
From \eqref{G4.10}, \eqref{G4.11} and \eqref{G4.14}, we get
\begin{align}\label{G4.15}
\|\partial_{t}(b-u)\|_{\dot H^{-1}}\lesssim&\, \|a u\|_{L^{\frac{6}{5}}}+\|u\cdot \nabla_{x} u \|_{L^\frac{6}{5}}+\|\nabla_{x} a\|_{\dot H^{-1}} +\|{\rm div}_x \Gamma(\{\mathbf{I}-\mathbf{P}\}f)\|_{\dot H^{-1}}  +\|b-u\|_{\dot H^{-1}}\nonumber\\
\lesssim&\, \|a\|_{L^3}\|u\|_{L^2}+\|u\|_{L^3}\|\nabla_{x} u\|_{L^2}+\|a\|_{L^2}+\|f\|_{L_v^2(L^2)}+\|b-u\|_{\dot H^{-1}}\nonumber\\
\lesssim&\, \varepsilon_1^{\frac{1}{2}}(1+t)^{-\frac{3}{4}+\sigma}.
\end{align}

Through direct calculations, we arrive at 
\begin{align}\label{G4.16}
\|\partial_{t} u\|_{L^2}=\|\partial_{t} \mathbb Pu\|_{L^2 }\lesssim  \|u\cdot\nabla_{x}u\|_{L^2}+\|au\|_{L^2}+\|b-u\|_{L^2} 
\lesssim   \varepsilon_1^{\frac{1}{2}}(1+t)^{-\frac{3}{4}+\sigma},
\end{align}
 and 
 \begin{align}\label{G4.17}
 \|\partial_{t}a\|_{L^2}\lesssim \|\nabla_{x} b\|_{L^2} \lesssim \varepsilon_1^{\frac{1}{2}}(1+t)^{-\frac{3}{4}+\sigma}.   
 \end{align}
Putting  the estimates \eqref{G4.15}--\eqref{G4.17} into
\eqref{G4.8} gives
\begin{align*}
\|\partial_{t}\pi\|_{\dot H^{-1}} \lesssim \varepsilon_1^{\frac{1}{2}}(1+t)^{-\frac{3}{4}+\sigma},   
\end{align*}
which together with \eqref{G4.6}, yields
\begin{align*}
\int_0^t \|D_{t}\pi\|_{\dot H^{-1}}^2\,{\rm d}\tau\lesssim \varepsilon_1+\varepsilon_1\int_0^t (1+\tau)^{-\frac{3}{2}+2\sigma} \,{\rm d}\tau\lesssim\varepsilon_1.
\end{align*}
Thus, \eqref{TGGG3} holds.

With the help of \eqref{NJKG4.1} and \eqref{G4.14}, we have
\begin{align*}
\|\pi\|_{L^2}  \lesssim&\, \|b-u\|_{\dot H^{-1}}+\|au\|_{L^{\frac{6}{5}}}+\|u\otimes u\|_{L^2} \nonumber\\
\lesssim&\, \varepsilon_1^\frac{1}{2}+\|a\|_{L^3}\|u\|_{L^2}+\|u\|_{L^3}\|u\|_{L^6}\lesssim\varepsilon_1^\frac{1}{2},
\end{align*}
which implies \eqref{TGGG4}. Thus, we complete the proof of Proposition \ref{prop3}.
\end{proof}

\section{Low Mach number limit of the   compressible Euler--VFP system}
In this section, we justify the global-in-time low Mach number limit for the scaled compressible Euler--VFP system \eqref{A1}. The whole  proof is divided into two steps. First, we establish the global \(H^2\)-error estimate in Theorem \ref{Th4}, which is crucial for the proof as it ensures the strong convergence of the acoustic variables and the full velocity field. 
Subsequently, we utilize this error estimate, along with the uniform-in-\(\varepsilon\) estimate \eqref{TG2} to prove the convergence result in Theorem \ref{Th5}.

\subsection{Proof of Theorem \ref{Th4}}
This subsection is devoted to proving Theorem \ref{Th4}. We now define the differences as follows:
\begin{align*}
\delta q:=q^\varepsilon-\varepsilon [P^\prime(1)]^{-1}\pi,\quad \delta u:=u^\varepsilon-u, ,\quad \delta f:=f^\varepsilon-f,     
\end{align*}
and 
\begin{align*}
\delta a:=a^\varepsilon-a,\quad \delta b:=b^\varepsilon-b,\quad \delta(b-u):=\de b-\de u.
\end{align*}
We also define the error functional $\delta\mathcal{X}(t)$ as
\begin{align}\label{G5.1}
 \delta\mathcal{X}(t):=&\,\sup_{\tau\in [0,t]}\big \{\|(\delta q,\delta u)(\tau)\|_{H^2}^2 +  \|\delta f(\tau)\|_{L_v^2(H^2)}^2\big\}+\int_0^t  \sum_{|\alpha|\leq 2} \|\{\mathbf{I}-\mathbf{P}\}\delta f(\tau)\|_{\nu}^2\,{\rm d}\tau \nonumber\\
 &+\int_0^t \big(\|\nabla_{x}(\delta a,\delta b,\delta q)(\tau)\|_{H^1}^2+ \|\delta (b-u)(\tau)\|_{H^2}^2    \big)\,{\rm d}\tau.
\end{align}
Owing to 
\begin{align*}
\|\nabla_{x} \delta u\|_{H^1}\lesssim \|\delta b-\delta u\|_{H^2}+\|\nabla_{x} \delta b \|_{H^1},    
\end{align*}
it holds that
\begin{align*}
\int_0^t \|\nabla_{x} \delta u (\tau)\|_{H^1}^2\,{\rm d}\tau \lesssim  \delta\mathcal{X}(t).  
\end{align*}

From the systems \eqref{A1} and \eqref{A2}, we  obtain that $(\delta q,\delta u,\delta f)$ satisfies the following error system:
\begin{equation} \label{G5.2}
\left\{\begin{aligned}
&\partial_{t}\delta q+  u^\varepsilon \cdot \nabla_{x} \delta q+\frac{1+\varepsilon q^\varepsilon}{\varepsilon} {\rm div}_x  \delta u= \delta F_1,\\
& \partial_{t}\delta u+u^\varepsilon\cdot \nabla_{x}\delta u+\frac{1}{\varepsilon}\frac{P^\prime(1+\varepsilon q^\varepsilon)}{1+\varepsilon q^\varepsilon}\nabla_{x}\delta q=\delta (b-u)-a(\delta u)-(\delta a)u^\varepsilon+\delta F_{2}+\delta F_3,\\
&\partial_{t}\delta f+v\cdot\nabla_{x}\delta f+u^\varepsilon\cdot \nabla_{v}\delta f-\frac{1}{2}u^\varepsilon\cdot v(\delta f)-\delta u\cdot v\sqrt{M}=\mathcal{L}\delta f-\delta u\cdot\nabla_{v}f+\frac{1}{2}\delta u\cdot v f,
 \end{aligned}
 \right.
\end{equation}
where the definitions of $\delta F_{i}$ with $i=1,2,3,$ are
\begin{equation}\label{ff1}
\left\{\begin{aligned}
&\delta F_1:=-\varepsilon [P^\prime(1)]^{-1} (D_{t}\pi+\delta u\cdot \nabla_{x}\pi),\\
& \delta F_{2}:=-\delta u\cdot \nabla_{x} u -\frac{\varepsilon q^\varepsilon}{1+\varepsilon q^\varepsilon}(b-u-au)  +\frac{\varepsilon q^\varepsilon}{1+\varepsilon q^\varepsilon} \big( a (\delta u)+(\delta a) u^\varepsilon\big)-\frac{\varepsilon q^\varepsilon}{1+\varepsilon q^\varepsilon}\delta (b-u),\\
&\delta F_3:= -\Big([P^\prime(1)]^{-1}\frac{P^\prime(1+\varepsilon q^\varepsilon)}{(1+\varepsilon q^\varepsilon)} -1\Big)\nabla_{x}\pi.
 \end{aligned}
 \right.
\end{equation}

We first estimate   $(\delta q,\delta u,\delta f)$ in the $L^2$-norm.
\begin{lem}\label{L5.1}
It holds that
\begin{align}\label{G5.3}
 &\sup_{0\leq \tau\leq t}  \big ( P^\prime(1) \|\delta q(\tau)\|_{L^2}^2+ \|\delta u(\tau)\|_{L^2}^2+ \|\delta f(\tau)\|_{L_v^2(L^2)}^2 \big)\nonumber\\
&\quad  +\int_0^t\big(\|\{\mathbf{I}-\mathbf{P}\}\delta f(\tau)\|_{\nu}^2
 + \|\delta(b-u)(\tau)\|_{L^2}^2\big
 )\,{\rm d}\tau
\leq \delta \mathcal{X }(0)+C\varepsilon^2+C\big(\varepsilon_0^{\frac{1}{2}}+\varepsilon_1^{\frac{1}{2}}\big) \de\mathcal{X}(t),
\end{align}
where $C > 0 $ is a constant independent of $\varepsilon$ and time $t$.
\end{lem}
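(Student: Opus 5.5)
Taking $L^2$ inner products of the error system \eqref{G5.2} with $P^\prime(1)\delta q$, $\delta u$ and $\delta f$, integrating over $[0,t]$ and summing gives
\begin{align*}
\frac12\frac{{\rm d}}{{\rm d}t}\big(P'(1)\|\delta q\|_{L^2}^2+\|\delta u\|_{L^2}^2+\|\delta f\|_{L^2_v(L^2)}^2\big)+\lambda_0\|\{\mathbf I-\mathbf P\}\delta f\|_\nu^2+\|\delta(b-u)\|_{L^2}^2=\text{(remainders)}.
\end{align*}
The dissipation comes from the Fokker--Planck part, exactly as in Lemma \ref{L3.1}. The linear coupling $-\delta u\cdot v\sqrt M$ in \eqref{G5.2}$_3$ tested with $\delta f$ gives $-\delta u\cdot\delta b$. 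Together with $\langle -\mathcal L\delta f,\delta f\rangle$ from \eqref{G2.3}, which contributes $|\delta b|^2$, and the term $\delta(b-u)\cdot\delta u$ from the momentum equation, this yields $\|\delta b-\delta u\|_{L^2}^2$.

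The singular acoustic terms do not cancel exactly, since the coefficients are variable. I would write $\frac{1+\varepsilon q^\varepsilon}{\varepsilon}=\frac1\varepsilon+q^\varepsilon$ and $\frac1\varepsilon\frac{P'(1+\varepsilon q^\varepsilon)}{1+\varepsilon q^\varepsilon}=\frac{P'(1)}{\varepsilon}+O(|q^\varepsilon|)$. After multiplying by $P'(1)$, the pure $\varepsilon^{-1}$ parts cancel upon integration by parts. What remains are terms like $\int q^\varepsilon\delta q\,{\rm div}\,\delta u$ and $\int \frac{1}{\varepsilon}\big(\frac{P'(1+\varepsilon q^\varepsilon)}{1+\varepsilon q^\varepsilon}-P'(1)\big)\nabla\delta q\cdot\delta u$, and the latter coefficient is $O(|q^\varepsilon|)$. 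These are bounded as in \eqref{G3.4}, using $\|q^\varepsilon\|_{H^3}\lesssim\varepsilon_0^{1/2}$ from Theorem \ref{Th1}, by
\[
\varepsilon_0^{1/2}\big(\|\delta q\|_{L^3}+\|\delta u\|_{L^3}\big)\|\nabla(\delta q,\delta u)\|_{L^2},
\]
whose time integral is $\lesssim\varepsilon_0^{1/2}\delta\mathcal X(t)$. The transport terms $u^\varepsilon\cdot\nabla$ give $\frac12\int{\rm div}\,u^\varepsilon|\cdot|^2$. These need care: $\int_0^t\|\nabla u^\varepsilon\|_{L^\infty}\,{\rm d}\tau$ is not bounded uniformly in time, so I would instead use $\|\nabla u^\varepsilon\|_{L^3}\|\delta\cdot\|_{L^2}\|\delta\cdot\|_{L^6}$ and then Cauchy--Schwarz in time,
\[
\Big(\int_0^t\|\nabla u^\varepsilon\|^2_{H^1}\Big)^{1/2}\sup_\tau\|\delta\cdot\|_{L^2}\Big(\int_0^t\|\nabla\delta\cdot\|_{L^2}^2\Big)^{1/2}\lesssim\varepsilon_0^{1/2}\delta\mathcal X(t).
\]
The bilinear terms $-a\delta u-\delta a\,u^\varepsilon$, $\delta u\cdot\nabla_v f$, $\delta u\cdot vf$ and $\delta F_2$ are handled the same way. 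Each product places one factor of a solution, small in an $L^2_t$ dissipative norm by \eqref{TG2} or \eqref{TGG2}, against two error factors; this gives $(\varepsilon_0^{1/2}+\varepsilon_1^{1/2})\delta\mathcal X(t)$. Kinetic terms use the macro--micro split as in \eqref{G3.7}. The non-error piece $\frac{\varepsilon q^\varepsilon}{1+\varepsilon q^\varepsilon}(b-u-au)$ in $\delta F_2$ is estimated by
\[
\varepsilon\int_0^t\|q^\varepsilon\|_{L^3}\|b-u-au\|_{L^2}\|\delta u\|_{L^6}\lesssim\varepsilon\,\varepsilon_0^{1/2}\varepsilon_1^{1/2}\big(\delta\mathcal X\big)^{1/2}\le C\varepsilon^2+\varepsilon_1\delta\mathcal X(t).
\]

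The forcing terms are the crux. In $\delta F_1$, the part $\varepsilon\int D_t\pi\,\delta q$ is handled exactly as announced in the introduction, by $\dot H^{-1}$--$\dot H^1$ duality and \eqref{TGGG3}:
\[
\varepsilon\|D_t\pi\|_{L^2_t\dot H^{-1}}\|\nabla\delta q\|_{L^2_tL^2}\le C\varepsilon^2+\varepsilon_1\delta\mathcal X(t).
\]
The part $\varepsilon\int(\delta u\cdot\nabla\pi)\delta q$ is cubic-small by \eqref{TGGG2}. For $\delta F_3$, the coefficient $[P'(1)]^{-1}\frac{P'(1+\varepsilon q^\varepsilon)}{1+\varepsilon q^\varepsilon}-1=O(\varepsilon|q^\varepsilon|)$, so it contributes
\[
\varepsilon\int_0^t\|q^\varepsilon\|_{L^3}\|\nabla\pi\|_{L^2}\|\delta u\|_{L^6}\lesssim C\varepsilon^2+\varepsilon_0\varepsilon_1\delta\mathcal X(t),
\]
using \eqref{TGGG2} for $\|\nabla\pi\|_{L^2_tL^2}$.

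The main obstacle I expect is the $L^2$-level control of error factors. $\delta\mathcal X$ controls only $\nabla\delta q$, $\nabla\delta a$, $\nabla\delta b$ and $\delta(b-u)$ in $L^2_t$, not $\delta q$ or $\delta u$ themselves. Every term must therefore be arranged so that undifferentiated error factors appear only through $\sup_t\|\cdot\|_{L^2}$ or $L^6\lesssim\dot H^1$. I would bound $\delta u$ in $L^2_t\dot H^1$ via $\|\nabla\delta u\|\lesssim\|\delta(b-u)\|_{H^1}+\|\nabla\delta b\|$. Integrating in time then yields \eqref{G5.3}, with initial term $\delta\mathcal X(0)$.
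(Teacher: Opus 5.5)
Most of your proposal follows the paper's proof: the weight $P'(1)\delta q$, the exact cancellation of the $\varepsilon^{-1}$ parts, the $\dot H^{-1}$--$\dot H^1$ duality for $\varepsilon D_t\pi$, and the $O(\varepsilon|q^\varepsilon|)$ coefficient in $\delta F_3$. The genuine gap is the step where you treat $-a\,\delta u-(\delta a)u^\varepsilon$ ``the same way'' as $\delta F_2$. Tested against $\delta u$, these give $-\int a|\delta u|^2$ and $-\int(\delta a)\,u^\varepsilon\cdot\delta u$. The solution factors here are $a$ and $u^\varepsilon$ themselves, not their gradients. By \eqref{TG2} and \eqref{TGG2}, only $\nabla a$ and $\nabla u^\varepsilon$ lie in $L^2_t$, so $a,u^\varepsilon$ are available only in $L^\infty_tL^p$ ($p\ge 2$) or $L^2_tL^p$ ($p\ge 6$). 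The undifferentiated error factors are, as you note yourself, only in $L^\infty_tL^2$ or $L^2_tL^6$. No Hölder distribution of three such factors is integrable over $[0,\infty)\times\mathbb R^3$: you would need, e.g., $a\in L^\infty_tL^{3/2}$ or $a\in L^2_tL^3$. For instance, $\int_0^t\|a\|_{L^6}\|\delta u\|_{L^2}\|\delta u\|_{L^3}\,{\rm d}\tau$ has time integrability $L^2\cdot L^\infty\cdot L^4$ and only gives a bound growing like $t^{1/4}$. Your appeal to \eqref{G3.7} does not help, because the macro--macro parts of the kinetic terms have the same defect:
\[
\tfrac12\int u^\varepsilon\cdot\langle v\mathbf P\delta f,\mathbf P\delta f\rangle\,{\rm d}x=\int\delta a\,u^\varepsilon\cdot\delta b\,{\rm d}x,\qquad \tfrac12\int\delta u\cdot\langle v\mathbf Pf,\mathbf P\delta f\rangle\,{\rm d}x-\int\langle\delta u\cdot\nabla_v\mathbf Pf,\mathbf P\delta f\rangle\,{\rm d}x=\int a\,\delta u\cdot\delta b\,{\rm d}x .
\]
Decay of the limit solution could rescue the $a$-terms, but nothing of the kind is available for the compressible $u^\varepsilon$.

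The missing idea is the exact cancellation between the drag terms in the momentum equation and these macroscopic kinetic pieces, which the paper uses (the identities after \eqref{G5.6}, and \eqref{G5.7}):
\[
\int\delta a\,u^\varepsilon\cdot\delta b\,{\rm d}x-\int(\delta a)\,u^\varepsilon\cdot\delta u\,{\rm d}x=\int\delta a\,u^\varepsilon\cdot\delta(b-u)\,{\rm d}x,\qquad \int a\,\delta u\cdot\delta b\,{\rm d}x-\int a|\delta u|^2\,{\rm d}x=\int a\,\delta u\cdot\delta(b-u)\,{\rm d}x .
\]
Because $\delta(b-u)\in L^2_tL^2$, these are bounded by
\[
\sup_\tau\|(u^\varepsilon,a)\|_{L^3}\,\|\nabla_x(\delta a,\delta u)\|_{L^2_t(L^2)}\,\|\delta(b-u)\|_{L^2_t(L^2)}\lesssim(\varepsilon_0^{1/2}+\varepsilon_1^{1/2})\,\delta\mathcal X(t).
\]
Only the micro--macro and micro--micro parts of the kinetic terms are then estimated as you describe. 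With this cancellation inserted, the rest of your argument goes through as in the paper.
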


\begin{proof}
It follows from \eqref{G2.3} and \eqref{G5.2}$_1$--\eqref{G5.2}$_3$ that 
\begin{align}\label{G5.4}
&\frac{1}{2} \big ( P^\prime(1) \|\delta q(t)\|_{L^2}^2+\|\delta u(t)\|_{L^2}^2+\|\delta f(t)\|_{L_v^2(L^2)}^2       \big) +\int_0^t\big(\|\{\mathbf{I}-\mathbf{P}\}\delta f(\tau)\|_{\nu}^2
 + \|\delta(b-u)(\tau)\|_{L^2}^2\big
 )\,{\rm d}\tau\nonumber\\
\leq\,&\frac{1}{2} \big ( P^\prime(1) \|\delta q(0)\|_{L^2}^2+\|\delta u(0)\|_{L^2}^2+\|\delta f(0)\|_{L_v^2(L^2)}^2       \big)+\frac{1}{2}\int_0^t\!\! \int_{\mathbb R^3}P^\prime(1)|\delta q|^2{\rm div}_x  u^\varepsilon\,{\rm d}x {\rm d}\tau \nonumber\\
&-\int_0^t\!\!\int_{\mathbb R^3} P^\prime(1) q^\varepsilon {\rm div}_x \delta u(\delta q)\, {\rm d}x{\rm d}\tau+\frac{1}{2}\int_0^t\!\! \int_{\mathbb R^3}|\delta u|^2{\rm div}_x  u^\varepsilon\,{\rm d}x {\rm d}\tau \nonumber\\
&-\frac{1}{\varepsilon}\int_0^t\!\!\int_{\mathbb R^3}\Big( \frac{P^\prime(1+\varepsilon q^\varepsilon)}{1+\varepsilon q^\varepsilon}-P^\prime(1)    \Big)\nabla_{x}\delta q\cdot \delta u \,{\rm d}x{\rm d}\tau
+\frac{1}{2}\int_0^t\!\!\int_{\mathbb R^3} u^\varepsilon\cdot \langle v \delta f,\delta f\rangle \, {\rm d}x{\rm d}\tau \nonumber\\
&-\int_0^t \!\!\int_{\mathbb R^3}\langle u^\varepsilon\cdot\nabla_{v}\delta f,\delta f\rangle \,{\rm d}x{\rm d}\tau-\int_0^t \!\!\int_{\mathbb R^3}(\delta a) u^\varepsilon\cdot\delta u\,{\rm d}x{\rm d}\tau+\frac{1}{2}\int_0^t\!\!\int_{\mathbb R^3} \delta u \cdot \langle v  f,\delta f\rangle \, {\rm d}x{\rm d}\tau\nonumber\\
&-\int_0^t \!\!\int_{\mathbb R^3}\langle \delta u \cdot\nabla_{v}  f,\delta f\rangle \,{\rm d}x{\rm d}\tau-\int_0^t \!\!\int_{\mathbb R^3}a (\delta u)\cdot\delta u\,{\rm d}x{\rm d}\tau+\int_0^t \!\!\int_{\mathbb R^3} (\delta q) \delta F_1\,{\rm d}x{\rm d}\tau\nonumber\\\
&+\int_0^t \!\!\int_{\mathbb R^3} (\delta u)\cdot \delta F_2\,{\rm d}x{\rm d}\tau+\int_0^t \!\!\int_{\mathbb R^3} (\delta u)\cdot \delta F_3\,{\rm d}x{\rm d}\tau\nonumber\\
\equiv:\,&  \frac{1}{2} \big ( P^\prime(1) \|\delta q(0)\|_{L^2}^2+\|\delta u(0)\|_{L^2}^2+\|\delta f(0)\|_{L_v^2(L^2)}^2       \big)+\sum_{j=1}^{13}\delta I_{j}.
\end{align}
For the terms $\delta I_1,\dots,\delta I_4$, by virtue of Lemma \ref{L2.1}, H\"{o}lder's and Young's inequalities, we get
\begin{align}\label{G5.5}
\sum^4_{i=1}\delta I_i \lesssim \,& \int_0^t (\|\delta q\|_{L^3}\|\delta q\|_{L^6}\|\nabla_{x}u^\varepsilon \|_{L^2}+\|q^\varepsilon\|_{L^3}\|\nabla_{x}\delta u\|_{L^2}\|\delta q\|_{L^6})\,{\rm d}\tau  \nonumber\\
&+ \int_0^t (\|\delta u\|_{L^3}\|\delta u\|_{L^6}\|\nabla_{x}u^\varepsilon \|_{L^2}+\|q^\varepsilon\|_{L^3}\|\nabla_{x}\delta q\|_{L^2}\|\delta u\|_{L^6})\,{\rm d}\tau  \nonumber\\
\lesssim&\,  \|(\delta q,\delta u)\|_{L_t^\infty(H^1)} \|\nabla_{x}(\delta q,\delta u)\|_{L_t^2(L^2)} \|\nabla_{x} u^\varepsilon\|_{L^2_t(L^2)} \nonumber\\
&+\|q^\varepsilon\|_{L_t^\infty(H^1)} \|\nabla_{x}(\delta q,\delta u)\|_{L_{t}^2(L^2)}^2 \nonumber\\
\lesssim\,& \varepsilon_0^{\frac{1}{2}}\delta\mathcal{X}(t).
\end{align}
For the terms $\delta I_5$, $\delta I_6$, and $\delta I_7$, applying the decomposition \eqref{G2.1} and the property that  {$|v|^k\sqrt{M}\lesssim1$ for any $k\geq 0$}, we have
\begin{align}\label{G5.6}
\delta I_5+\delta I_6+\delta  I_7\lesssim &\, \int_0^t (\|u^\varepsilon\|_{L^\infty}\|\{\mathbf{I}-\mathbf{P}\}\delta f\|_{\nu}^2+ \|u^\varepsilon\|_{L^3} \|\delta a\|_{L^6}\|\delta (b-u)\|_{L^2})\, {\rm d}\tau \nonumber\\
&+\int_0^t \|u^\varepsilon\|_{L^3}\|(\delta a,\delta b)\|_{L^6}\|\{\mathbf{I}-\mathbf{P}\}\delta f\|_{\nu} \,{\rm d}\tau \nonumber\\
\lesssim &\, \|u^\varepsilon\|_{L_t^\infty(H^2)}\int_0^t \big(\|\delta (b-u)\|_{L^2}^2+ \|\nabla_{x}(\delta a,\delta b)\|_{L^2}^2+    \|\{\mathbf{I}-\mathbf{P}\}\delta f\|_{\nu}^2      \big)\,{\rm d}\tau\nonumber\\
\lesssim&\, \varepsilon_0^{\frac{1}{2}} \delta\mathcal{X}(t),
\end{align}
where we  have used the facts that
\begin{align*}
\frac{1}{2} u^\varepsilon \cdot v\mathbf{ P}\delta f- u^\varepsilon \cdot \nabla_{v} \mathbf{P}\delta f=\big((\delta a) u^\varepsilon\cdot v-u^\varepsilon\cdot \delta b+u^\varepsilon\cdot v(\delta b)\cdot v      \big)\sqrt{M},
\end{align*}
and
\begin{align*}
\int_{\mathbb R^3}    \big(u^\varepsilon \cdot v(\delta b)\cdot v\big)\mathbf{P}\delta f \sqrt{M}\,{\rm d}\tau=(\delta a) u^\varepsilon \cdot \delta b.
\end{align*}
Similar to \eqref{G5.6}, we infer that  
\begin{align}\label{G5.7}
\delta I_8+\delta I_9+\delta I_{10}\lesssim  \varepsilon^\frac{1}{2}_1\delta \mathcal{X}(t).   
\end{align}

For the difficult term $\delta I_{11}$ related to the pressure $\nabla_{x}\pi$, which affects the convergence rate of the low Mach number limit, applying Proposition \ref{prop3} and Young's inequality, we find that
\begin{align}\label{G5.8}
\delta I_{11}\lesssim&\, \varepsilon   \|D_{t}\pi\|_{L_t^2(\dot H^{-1})}  \|\delta q\|_{L_t^2(\dot H^1)}+\varepsilon \|\delta u\|_{L_t^\infty(L^3)}\|\delta q\|_{L_t^2(L^6)} \|\nabla_{x}\pi\|_{L_t^2(L^2)}\nonumber\\
\lesssim&\, \varepsilon^2+ \varepsilon_1 \delta\mathcal{X}(t)+\varepsilon\varepsilon_1^{\frac{1}{2}} \delta\mathcal{X}(t)\nonumber\\
\lesssim&\, \varepsilon^2+ \varepsilon_1^{\frac{1}{2}}\delta\mathcal{X}(t).
\end{align}
For the remaining terms $\delta I_{12}$ and $\delta I_{13}$, by Proposition \ref{prop3}, Lemma \ref{L2.1} and Young's inequality, we have  
\begin{align}\label{G5.9}
& \delta I_{12}+\delta I_{13}\nonumber\\
\lesssim \,& \int_0^t (\|\delta u\|_{L^3}\|\delta u\|_{L^6}\|\nabla_{x} u\|_{L^2}+\|b-u\|_{L^2}\|\varepsilon q^\varepsilon\|_{L^3}\|\delta u\|_{L^6}+\|\varepsilon q^\varepsilon\|_{L^2}\|a\|_{L^6}\|u\|_{L^6}\|\delta u\|_{L^6})\,{\rm d}\tau\nonumber\\
&+\int_0^t(\|a\|_{L^6}\|\delta u\|_{L^6}^2\|\varepsilon q^\varepsilon\|_{L^2}+\|\delta a\|_{L^6}\|\delta u\|_{L^6}\|u^\varepsilon\|_{L^6}\|\varepsilon q^\varepsilon\|_{L^2})\,{\rm d}\tau\nonumber\\
&+\int_0^t (\|\delta u\|_{L^6}\|\nabla_{x}\pi\|_{L^2}\|\varepsilon q^\varepsilon\|_{L^3}+ \|\varepsilon q^\varepsilon\|_{L^3}\|\delta(b-u)\|_{L^2}\|\delta u\|_{L^6})\,{\rm d}\tau\nonumber\\
\lesssim\, &\big(1+\varepsilon_0^\frac{1}{2}+\varepsilon_1^\frac{1}{2}\big) \big(\|\nabla_x(\delta u,\delta a)\|_{L_t^2(L^2)}^2+\|\delta(b-u)\|_{L_t^2(L^2)}^2\big)+ \varepsilon \varepsilon_0^\frac{1}{2} \|b-u\|_{L_t^2(L^2)}\|\nabla_{x}\delta u\|_{L_t^2(L^2)}\nonumber\\
&+\varepsilon \varepsilon_0^\frac{1}{2}\|\nabla_{x}(a,u)\|_{L_t^2(L^2)}^2\|\nabla_{x}\delta u\|_{L_t^2(L^2)}+\varepsilon \varepsilon_0^\frac{1}{2}\|\nabla_{x}\pi\|_{L_t^2(L^2)}\|\nabla_{x}\delta u\|_{L_t^2(L^2)}\nonumber\\
\lesssim\,& \big(1+\varepsilon_0^\frac{1}{2}+\varepsilon_1^\frac{1}{2}\big)\delta\mathcal{X}(t)+\varepsilon^2+\varepsilon_0 \delta\mathcal{X}(t) \nonumber\\
\lesssim\,&  \big(1+\varepsilon_0^\frac{1}{2}+\varepsilon_1^\frac{1}{2}\big)\delta\mathcal{X}(t)+\varepsilon^2.
\end{align}
Thus, inserting the estimates \eqref{G5.5}--\eqref{G5.9} into
\eqref{G5.4}, we obtain the desired estimate \eqref{G5.3}.
\end{proof}

Furthermore, we give the estimate of $(\delta q,\delta u,\delta f)$ in the $\dot H^1\cap \dot H^2$-norm.
\begin{lem}\label{L5.2}
It holds that
\begin{align}\label{G5.10}
 &\sup_{0\leq \tau\leq t} \Big(  \sum_{1\leq |\alpha|\leq 
 2}\Big\|\frac{ \sqrt{P^\prime(1+\varepsilon q^\varepsilon)}}{1+\varepsilon q^\varepsilon} \partial^\alpha_{x} \delta q(\tau) \Big\|_{L^2}^2+\| \delta u(\tau)\|_{\dot H^1\cap\dot H^2}^2+\| \delta f (\tau)\|_{L_v^2( \dot H^1\cap\dot H^2)}^2     \Big)  \nonumber\\
 &\quad +\int_0^t \Big(\sum_{1\leq |\alpha|\leq 2}\|\{\mathbf{I}-\mathbf{P}\}\partial^\alpha_{x}\delta f(\tau)\|_{\nu}^2
 + \|\delta(b-u)(\tau)\|_{\dot H^1\cap\dot H^2}^2\Big
 )\,{\rm d}\tau\nonumber\\
& \qquad \leq \delta \mathcal{X }(0)+C\varepsilon^2+C\big(\varepsilon_0^{\frac{1}{2}}+\varepsilon_1^{\frac{1}{2}}\big) \de\mathcal{X}(t),
\end{align}
where $C > 0 $ is a constant independent of $\varepsilon$ and time $t$.
\end{lem}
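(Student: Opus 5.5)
We apply $\partial^\alpha_x$ with $1\le|\alpha|\le 2$ to the error system \eqref{G5.2}. Then we repeat the symmetrized energy method of Lemma \ref{L3.2} at one derivative lower. That is, we multiply the $\delta q$-equation by $\frac{P'(1+\varepsilon q^\varepsilon)}{(1+\varepsilon q^\varepsilon)^2}\partial^\alpha_x\delta q$, the $\delta u$-equation by $\partial^\alpha_x\delta u$, and the $\delta f$-equation by $\partial^\alpha_x\delta f$, integrate over $[0,t]\times\mathbb R^3$ (and $\mathbb R^3_v$), and sum over $\alpha$. The two singular terms
\[
\frac{1}{\varepsilon}\int \frac{P'(1+\varepsilon q^\varepsilon)}{1+\varepsilon q^\varepsilon}\,\partial^\alpha_x{\rm div}_x\delta u\,\partial^\alpha_x\delta q\,{\rm d}x
\quad\text{and}\quad
\frac{1}{\varepsilon}\int\frac{P'(1+\varepsilon q^\varepsilon)}{1+\varepsilon q^\varepsilon}\nabla_x\partial^\alpha_x\delta q\cdot\partial^\alpha_x\delta u\,{\rm d}x
\]
cancel after integration by parts. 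What remains is $\varepsilon^{-1}\nabla_x\big(\frac{P'(1+\varepsilon q^\varepsilon)}{1+\varepsilon q^\varepsilon}\big)$, which is $O(|\nabla q^\varepsilon|)$ by Remark \ref{rem3.1}. The same remark handles the commutators $\varepsilon^{-1}[\partial^\alpha_x,\frac{P'}{\rho}]\nabla_x\delta q$ and $\varepsilon^{-1}[\partial^\alpha_x,(1+\varepsilon q^\varepsilon){\rm div}_x]\delta u=[\partial^\alpha_x,q^\varepsilon{\rm div}_x]\delta u$. Each is bounded by $\|\nabla q^\varepsilon\|_{H^2}$ times $\|\nabla(\delta q,\delta u)\|_{H^1}$, and these are time-integrable by Theorem \ref{Th1} and the definition of $\delta\mathcal X$. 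The time derivative of the weight is handled as in \eqref{G3.11}, giving $\lesssim\varepsilon_0^{1/2}\delta\mathcal X(t)$. On the kinetic side, \eqref{G2.3} applied to $\partial^\alpha_x\delta f$ together with the coupling $-\partial^\alpha\delta u\cdot v\sqrt M$ and the term $\partial^\alpha\delta(b-u)$ from the momentum equation produces the good terms $\|\{\mathbf I-\mathbf P\}\partial^\alpha\delta f\|_\nu^2+\|\partial^\alpha\delta(b-u)\|_{L^2}^2$, exactly as in \eqref{G3.10}.

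The transport and nonlinear terms $u^\varepsilon\cdot\nabla_x$, $u^\varepsilon\cdot\nabla_v\delta f$, $\frac12u^\varepsilon\cdot v\delta f$, $\delta u\cdot\nabla_vf$, $\frac12\delta u\cdot vf$, $a\,\delta u$, $\delta a\,u^\varepsilon$ are estimated by the commutator and product bounds of Lemmas \ref{L2.1}--\ref{L2.2}. We put the small factor on the background, using $\|(q^\varepsilon,u^\varepsilon,f^\varepsilon)\|_{H^3}\lesssim\varepsilon_0^{1/2}$ or $\|(u,f)\|_{H^3}\lesssim\varepsilon_1^{1/2}$, and bound the quadratic error part by $\int_0^t(\|\nabla(\delta q,\delta u,\delta a,\delta b)\|_{H^1}^2+\|\delta(b-u)\|_{H^2}^2+\sum\|\{\mathbf I-\mathbf P\}\partial^\alpha\delta f\|_\nu^2)$. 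The macro parts of $\delta f$ are again reduced via the identities used in \eqref{G5.6}. Terms where derivatives fall on the limit solution, such as $\partial^\alpha\delta u\cdot\nabla u$ or $\delta u\cdot\nabla\partial^\alpha f$, are bounded with $\|\delta u\|_{L^\infty_t H^2}$ times $L^2_t$ dissipation of $(u,f)$ from \eqref{TGG2}. All of this yields $C(\varepsilon_0^{1/2}+\varepsilon_1^{1/2})\delta\mathcal X(t)$.

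The source terms are treated as follows. For $\delta F_2$ and $\delta F_3$, every piece either carries a factor $\varepsilon q^\varepsilon$ or $\big(\frac{P'(1+\varepsilon q^\varepsilon)}{P'(1)(1+\varepsilon q^\varepsilon)}-1\big)=O(\varepsilon q^\varepsilon)$, or is of the form $\delta u\cdot\nabla u$. For the $O(\varepsilon)$ pieces we integrate one derivative by parts where needed and pair with $\partial^\alpha\delta u$. We then use \eqref{TGGG2} for $\int\|\nabla\pi\|_{H^2}^2$, \eqref{TGG2} for $b-u$, and Young's inequality to get $\varepsilon^2+\varepsilon_0^{1/2}\delta\mathcal X(t)$. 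The main obstacle is the term $\varepsilon[P'(1)]^{-1}\int_0^t\!\int\partial^\alpha D_t\pi\,\partial^\alpha\delta q$ coming from $\delta F_1$. Unlike the $L^2$ level, we do not need $\dot H^{-1}$ duality here. Since $|\alpha|\ge1$, we write $\partial^\alpha D_t\pi=\partial^{\alpha-e_i}\partial_iD_t\pi$ and use $\|\nabla_xD_t\pi\|_{L^2_tH^1}\lesssim\varepsilon_1^{1/2}$ from \eqref{TGGG2}, paired with $\|\nabla\delta q\|_{L^2_tH^1}$. This gives $\varepsilon\varepsilon_1^{1/2}\delta\mathcal X^{1/2}\le C\varepsilon^2+\varepsilon_1^{1/2}\delta\mathcal X(t)$. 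The companion term $\varepsilon\partial^\alpha(\delta u\cdot\nabla\pi)$ is handled with \eqref{TGGG1}--\eqref{TGGG2}.

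It remains to check that the weight mismatch between $\frac{P'(1+\varepsilon q^\varepsilon)}{(1+\varepsilon q^\varepsilon)^2}$ and the unweighted $\delta F_1$ pairing costs only $\varepsilon q^\varepsilon$-small terms. Once that is verified, summing over $1\le|\alpha|\le2$ and taking the supremum in time yields \eqref{G5.10}.
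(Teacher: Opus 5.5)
Your proposal is correct and follows essentially the same route as the paper. Both use the symmetrized $\partial^\alpha_x$-energy estimate on \eqref{G5.2} with the weight $\frac{P'(1+\varepsilon q^\varepsilon)}{(1+\varepsilon q^\varepsilon)^2}$, the same grouping of weight-derivative, commutator, kinetic-coupling and source terms, and the same treatment of $\varepsilon\,\partial^\alpha_x D_t\pi$ via $\|\nabla_x D_t\pi\|_{L^2_t(H^1)}$ from \eqref{TGGG2} instead of $\dot H^{-1}$ duality. The ``weight mismatch'' you leave to check is immediate: that weight is bounded above and below uniformly in $\varepsilon$, so it only changes constants in the $\delta F_1$ pairing.
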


\begin{proof}
Applying the operator $\partial^\alpha_{x}$ with $1\leq|\alpha| \leq 2$ to \eqref{G5.2}$_1$--\eqref{G5.2}$_3$, respectively, and performing a basic $L^2$ energy estimate, we have
\begin{align}\label{G5.11}
&\frac{1}{2}  \Big(  \Big\|\frac{\sqrt{P^{\prime}(1+\varepsilon q^\varepsilon)}}{1+\varepsilon q^\varepsilon} \partial_{x}^\alpha \delta q(t)\Big\|_{L^2}^2+     \|\partial^\alpha_{x}\delta u(t)\|_{L^2}^2+\|\partial^\alpha_{x}\delta f(t)\|_{L_v^2(L^2)}^2   \Big)   \nonumber\\
& +\lambda_0\int_0^t \sum_{1\leq |\alpha|\leq 2}\|\{\mathbf{I}-\mathbf{P}\}\partial^\alpha_{x}\delta f(\tau)\|_{\nu}^2\,{\rm d}\tau + \int_0^t\|\partial_{x}^\alpha\delta(b-u)(\tau)\|_{L^2}^2\,{\rm d}\tau \nonumber\\
\leq\,&  \frac{1}{2}  \Big(  \Big\|\frac{\sqrt{P^{\prime}(1+\varepsilon q^\varepsilon)}}{1+\varepsilon q^\varepsilon} \partial_{x}^\alpha \delta q(0)\Big\|_{L^2}^2+     \|\partial^\alpha_{x}\delta u(0)\|_{L^2}^2+\|\partial^\alpha_{x}\delta f(0)\|_{L_v^2(L^2)}^2   \Big) \nonumber\\
& +\frac{1}{2}\int_0^t\!\!\int_{\mathbb R^3} \partial_{t} \Big( \frac{P^\prime(1+\varepsilon q^\varepsilon)}{(1+\varepsilon q^\varepsilon)^2}  \Big) |\partial^\alpha_{x}\delta q|^2 \,{\rm d}x{\rm d}\tau+\frac{1}{2}\int_0^t\!\!\int_{\mathbb R^3} \nabla_{x
}\cdot \Big( \frac{P^\prime(1+\varepsilon q^\varepsilon)}{(1+\varepsilon q^\varepsilon)^2} u^\varepsilon \Big) |\partial^\alpha_{x}\delta q|^2\,{\rm d}x{\rm d}\tau \nonumber\\
&-\int_0^t\!\! \int_{\mathbb R^3} \frac{P^\prime(1+\varepsilon q^\varepsilon)} {(1+\varepsilon q^\varepsilon)^2} \big([\partial^\alpha_{x}, u^\varepsilon\cdot\nabla_{x}]\delta q 
+[\partial^\alpha_{x
},  q^\varepsilon \nabla_{x
}\cdot ]\delta u  \big)\partial^\alpha_{x}\delta q \,{\rm d}x{\rm d}\tau \nonumber\\
&+\frac{1}{\varepsilon }\int_0^t\!\!\int_{\mathbb R^3} \nabla_{x}\bigg( \frac{P^\prime(1+\varepsilon q^\varepsilon)}{1+\varepsilon q^\varepsilon} \bigg)\partial^\alpha_{x} \delta q  \cdot \partial^\alpha_{x} \delta u  \,{\rm d}x{\rm d}\tau+\frac{1}{2}\int_0^t\!\!\int_{\mathbb R^3} |\partial^\alpha_{x} \delta u |^2{\rm div}_x  u^\varepsilon\,{\rm d}x{\rm d}\tau\nonumber\\
&-\int_0^t\!\!\int_{\mathbb R^3} \bigg(  [\partial^\alpha_{x},u^\varepsilon\cdot\nabla_{x}]\delta u+\frac{1}{\varepsilon} \Big[\partial^\alpha_{x}, \frac{P^\prime(1+\varepsilon q^\varepsilon)}{1+\varepsilon q^\varepsilon} \Big] \nabla_{x}\delta q         \bigg)\cdot \partial^\alpha_{x}\delta u  \,{\rm d}x{\rm d}\tau \nonumber\\
&+\int_0^t \!\!\int_{\mathbb R^3}\!\!\int_{\mathbb R^3}\partial^\alpha_{x}\Big( \frac{1}{2}u^\varepsilon\cdot v(\delta f)-u^\varepsilon\cdot\nabla_{v}\delta f  \Big)\partial^\alpha_{x}\delta f \,{\rm d}x{\rm d}v{\rm d}\tau-\int_0^t\!\!\int_{\mathbb R^3} \partial^\alpha_{x}\big(a(\delta u)\big)\cdot\partial^\alpha_{x}\delta u\,{\rm d}x{\rm d}\tau\nonumber\\
&+\int_0^t \!\!\int_{\mathbb R^3}\!\!\int_{\mathbb R^3}\partial^\alpha_{x}\Big( \frac{1}{2}(\delta u) \cdot v   f-\delta u\cdot\nabla_{v} f  \Big)\partial^\alpha_{x}\delta f \,{\rm d}x{\rm d}v{\rm d}\tau-\int_0^t\!\!\int_{\mathbb R^3 }\partial^\alpha_{x}\big((\delta a) u^\varepsilon\big)\cdot\partial^\alpha_{x}\delta u\,{\rm d}x{\rm d}\tau\nonumber\\
&+\int_0^t \!\!\int_{\mathbb R^3} \partial^\alpha_{x}(\delta q)\partial^\alpha_{x}\delta F_1\, {\rm d}x{\rm d}\tau+\int_0^t \!\!\int_{\mathbb R^3} \partial^\alpha_{x}(\delta u)\cdot \partial^\alpha_{x}(\delta F_2+\delta F_3)\, {\rm d}x{\rm d}\tau\nonumber\\
\equiv:\,&  \frac{1}{2}  \bigg(  \Big\|\frac{\sqrt{P^{\prime}(1+\varepsilon q^\varepsilon)}}{1+\varepsilon q^\varepsilon} \partial_{x}^\alpha \delta q(0)\Big\|_{L^2}^2+     \|\partial^\alpha_{x}\delta u(0)\|_{L^2}^2+\|\partial^\alpha_{x}\delta f(0)\|_{L_v^2(L^2)}^2   \bigg) +\sum_{i=1}^{12}\delta  J_{i}.
\end{align}

For the term $\delta J_1$, arguing analogously to \eqref{G3.11}, we have
\begin{align}\label{G5.12}
\delta J_1\lesssim&\,\varepsilon\int_0^t  \|q^\varepsilon\|_{H^3} \|\partial_t q^\varepsilon\|_{L^\infty} \|\partial^\alpha_{x}\delta q\|_{L^2}^2 \,{\rm d} \tau\nonumber\\
\lesssim&\,\varepsilon \|q^\varepsilon\|_{L_t^\infty(H^3)}\|\nabla_{x}\delta q\|_{L_t^2(H^1)}^2\nonumber\\
\lesssim&\, \varepsilon_0^\frac{1}{2}\delta\mathcal{X}(t).
\end{align}
For the  terms $\delta J_2,\dots,\delta J_6$, using
Lemmas \ref{L2.1}--\ref{L2.2}, H\"{o}lder's and Young's inequalities, we obtain 
\begin{align}\label{G5.13}
\sum_{i=2}^6 \delta J_i\lesssim &\,  \|\nabla_{x}(q^\varepsilon,u^\varepsilon )\|_{L_t^\infty(H^1)}^2\|\nabla_{x}\delta q\|_{L_t^2(L^2)}^2+\|(u^\varepsilon,q^\varepsilon)\|_{L_t^\infty(H^3)}\|\nabla_{x}(\delta u,\delta q)\|_{L_t^2(H^1)}^2 \nonumber\\
&+\frac{1}{\varepsilon}\| \varepsilon\nabla_{x} q^\varepsilon \|_{L_t^\infty(H^2)}\|\nabla_{x}(\delta u,\delta q)\|_{L_t^2(H^1)}^2\nonumber\\
\lesssim&\, \varepsilon_0^\frac{1}{2}\delta\mathcal{X}(t).
\end{align}
For the terms $\delta J_{7},\dots,\delta J_{10}$, applying the macro-micro decomposition \eqref{G2.1} and Lemma \ref{L2.1}, we deduce that
\begin{align}\label{G5.14}
\sum_{i=7}^{10}\delta J_{i}\lesssim&\, \|\nabla_{x
}u^\varepsilon\|_{L_t^2(H^2)}\|\delta f\|_{L_t^\infty(L_v^2(H^2))}\Big(  \|\nabla_{x}(\delta a,\delta b)\|_{L_t^2(H^1)}+ \sum_{1\leq |\alpha|\leq 2}\|\{\mathbf{I}-\mathbf{P}\}\partial^\alpha_{x}\delta f\|_{L_t^2(\nu)}          \Big)\nonumber\\
&+\|\nabla_{x
}\delta u \|_{L_t^2(H^1)}\|  f\|_{L_t^\infty(L_v^2(H^3))}\Big(  \|\nabla_{x}(\delta a,\delta b)\|_{L_t^2(H^1)}+ \sum_{1\leq |\alpha|\leq 2}\|\{\mathbf{I}-\mathbf{P}\}\partial^\alpha_{x}\delta f\|_{L_t^2(\nu)}          \Big) \nonumber\\
&+\|(a,u^\varepsilon)\|_{L_t^\infty(H^2)}\|\nabla_{x}(\delta u,\delta a)\|_{L_t^2(L^2)}^2\nonumber\\
\lesssim&\, \big(\varepsilon^\frac{1}{2}_0+\varepsilon_1^\frac{1}{2}\big) \delta\mathcal{X}(t).
\end{align}
Finally, for the remaining terms $\delta J_{11}$ and $\delta J_{12}$, by virtue of Lemma \ref{L2.1}, Proposition \ref{prop3} and Young's inequality, it holds
\begin{align}\label{G5.15}
\delta J_{11}+\delta J_{12}\lesssim &\, \varepsilon \|\nabla_{x}\delta q\|_{L_t^2(H^1)} \big( \|\nabla_{x}D_{t}\pi\|_{L_t^2(H^1)}+  \|\nabla_{x}\delta u\|_{L_t^2(H^1)}\|\nabla\pi\|_{L_t^\infty(H^2)}\big)+\|u\|_{L_t^\infty(H^3)}\|\nabla_{x}\delta u\|_{L_t^2(H^1)}^2\nonumber\\
&+\varepsilon \|q^\varepsilon\|_{L_t^\infty(H^3)}\big(\|b-u\|_{L_t^2(L^2)}+\|a\|_{L_t^\infty(L^3)}\|\nabla_{x} b\|_{L_t^2(L^2)}\big)\|\nabla_{x}\delta u\|_{L_t^2(H^1)}\nonumber\\
&+\varepsilon\|q^\varepsilon\|_{L_t^\infty(H^3)}\big(\|\delta(b-u)\|_{L_t^2(H^2)}\|\nabla_{x}\delta u\|_{L_t^2(H^1)}+\|\nabla_{x}(\delta u,\delta a)\|_{L_t^2(L^2)}^2\|(a,u^\varepsilon)\|_{L_t^\infty(H^3)} \big)\nonumber\\
&+\varepsilon\|\nabla_{x}\pi\|_{L_t^2(H^2)}\|\nabla_{x}q^\varepsilon\|_{L_t^2(H^1)}\|\delta u\|_{L_t^\infty(H^2)}\nonumber\\
\lesssim&\,\big( 1+\varepsilon_0^\frac{1}{2}+\varepsilon_1^\frac{1}{2}   \big)\delta\mathcal{X}(t)+\varepsilon^2+(\varepsilon_0+\varepsilon_1)\delta\mathcal{X}(t).
\end{align}

Substituting the estimates \eqref{G5.12}--\eqref{G5.15} into \eqref{G5.11} and then summing over $1\leq|\alpha|\leq 2$, we arrive at \eqref{G5.10}.
\end{proof}

Next, we show the $H^1$-norm estimate of $\nabla_{x}(\delta a,\delta b)$.
\begin{lem}\label{L5.3}
It holds that
\begin{align}\label{G5.16}
\int_0^t \|\nabla_{x}(\delta a,\delta b)(\tau)\|_{H^1}^2\,{\rm d}\tau\leq  \delta \mathcal{X }(0)+C\varepsilon^2+  C\delta\mathcal{X}(t),  
\end{align}
where $C > 0$ is a constant independent of $\varepsilon$ and time $t$.
\end{lem}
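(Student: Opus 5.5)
This is the difference analogue of Lemma \ref{L3.3}, so I will run the Kawashima-type interactive functional argument on the macroscopic error equations. Taking the first two velocity moments of \eqref{G5.2}$_3$ and of the $\Gamma_{ij}$-moment, I will write the error version of \eqref{AB}:
\begin{align*}
&\partial_t\delta a+{\rm div}_x\delta b=0,\\
&\partial_t\delta b_i+\partial_i\delta a+\sum_j\partial_j\Gamma_{ij}(\{\mathbf I-\mathbf P\}\delta f)=-\delta(b-u)_i+(\delta u_i) a^\varepsilon+u_i(\delta a),\\
&\partial_i\delta b_j+\partial_j\delta b_i=-\partial_t\Gamma_{ij}(\{\mathbf I-\mathbf P\}\delta f)+\Gamma_{ij}(\delta\ell+\delta r)+\delta N_{ij}.
\end{align*}
Here $\delta\ell$ is linear in $\{\mathbf I-\mathbf P\}\delta f$, and $\delta r,\delta N_{ij}$ collect the quadratic differences, such as $u^\varepsilon\otimes\delta b+\delta u\otimes b$ and $\delta u\cdot\nabla_v f$. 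Next I define
\[
\delta\mathcal E_0(t):=\sum_{|\alpha|\le1}\sum_{i,j}\int\partial^\alpha(\partial_i\delta b_j+\partial_j\delta b_i)\,\partial^\alpha\Gamma_{ij}(\{\mathbf I-\mathbf P\}\delta f)\,{\rm d}x-\sum_{|\alpha|\le1}\int\partial^\alpha\delta a\,\partial^\alpha{\rm div}_x\delta b\,{\rm d}x.
\]
This is \eqref{G3.15} with one derivative fewer, matching the $H^2$ level of $\delta\mathcal X$.

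The next step is to differentiate $\delta\mathcal E_0$ in time and use the three equations, following \cite[Lemma 2.4]{CDM-KRM-2011}. The second-order identity gives $\sum_{ij}\|\partial^\alpha(\partial_i\delta b_j+\partial_j\delta b_i)\|^2$, which controls $\|\nabla\partial^\alpha\delta b\|^2$ through $\|\partial_i\delta b_j+\partial_j\delta b_i\|^2\ge 2\|\nabla\delta b\|^2$. The $\delta a$ part, after substituting $\partial_t\delta b$, gives $\|\nabla\partial^\alpha\delta a\|^2$, with remainders bounded by $\|{\rm div}\,\delta b\|_{H^1}^2$ and by $\|\delta(b-u)\|_{H^1}\|\nabla\delta a\|_{H^1}$. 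Terms containing $\partial_t\Gamma(\{\mathbf I-\mathbf P\}\delta f)$ will be moved by integration by parts in time onto $\partial_t\partial^\alpha\delta b$, which is then replaced by its equation. The result should be
\[
\frac{\rm d}{{\rm d}t}\delta\mathcal E_0+\lambda\|\nabla(\delta a,\delta b)\|_{H^1}^2\lesssim\sum_{|\alpha|\le2}\|\{\mathbf I-\mathbf P\}\partial^\alpha\delta f\|_\nu^2+\|\delta(b-u)\|_{H^1}^2+\mathcal N(t),
\]
where $\mathcal N$ gathers the nonlinear differences.

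I then integrate over $[0,t]$. Since $|\delta\mathcal E_0(t)|\lesssim\|\delta f(t)\|_{L^2_v(H^2)}^2\le\delta\mathcal X(t)$ and $|\delta\mathcal E_0(0)|\lesssim\delta\mathcal X(0)$, the boundary terms are harmless. The linear dissipative terms on the right are bounded directly by $\delta\mathcal X(t)$, which is why the lemma allows the constant $C\delta\mathcal X(t)$ rather than a small multiple of it.

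The main obstacle is bounding $\int_0^t\mathcal N$ without losing a derivative and without acquiring non-integrable time factors. For mixed terms like $u^\varepsilon\otimes\delta b$, $(\delta u)a^\varepsilon$ and $u^\varepsilon\cdot\nabla_v\{\mathbf I-\mathbf P\}\delta f$, I will put the $\delta$-factor in $L^2_t$ using the dissipation in $\delta\mathcal X$, and the other factor in $L^\infty_t(H^3)$ from \eqref{TG2} and \eqref{TGG2}, giving $(\varepsilon_0^{1/2}+\varepsilon_1^{1/2})\delta\mathcal X(t)$. For terms like $\delta u\otimes b$ and $\delta u\cdot\nabla_vf$, I will use $\int_0^t\|\nabla\delta u\|_{H^1}^2\lesssim\delta\mathcal X(t)$ together with $\sup\|f\|_{L^2_v(H^3)}\lesssim\varepsilon_1^{1/2}$. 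Undifferentiated $\delta u$ or $\delta a$ factors will be placed in $L^6$ and controlled by their gradients, which keeps every $\delta$-factor inside $\delta\mathcal X(t)$. No $\varepsilon^{-1}$ terms appear, since the fluid equations enter only through $\delta u$. After Young's inequality this yields \eqref{G5.16}, with the $C\varepsilon^2$ term included only as a harmless allowance.
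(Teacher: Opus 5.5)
Your proposal is correct and follows essentially the same route as the paper: the error moment system \eqref{CD}, the same interactive functional (the paper's $\mathfrak{E}_0(t)$, i.e.\ \eqref{G3.15} with one derivative fewer), integration by parts in time on the $\partial_t\Gamma_{ij}(\{\mathbf{I}-\mathbf{P}\}\delta f)$ term with $\partial_t\delta b$ replaced via \eqref{CD}$_2$, the symmetric-gradient identity for $\delta b$ with the analogous computation for $\delta a$, and the bound $|\mathfrak{E}_0|\lesssim\|\delta f\|_{L_v^2(H^2)}^2$ after integrating in time; your different splitting of the quadratic differences (e.g.\ $u^\varepsilon\otimes\delta b+\delta u\otimes b$ instead of $\delta u\otimes b^\varepsilon+u\otimes\delta b$) is immaterial. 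Note that \eqref{G5.16} as literally stated already follows from the definition \eqref{G5.1}, since $\int_0^t\|\nabla_x(\delta a,\delta b)\|_{H^1}^2\,{\rm d}\tau$ is a summand of $\delta\mathcal{X}(t)$; the substantive content is what your intermediate differential inequality retains and what the paper blurs by absorbing small prefactors into a generic $C$ in \eqref{G5.19}--\eqref{G5.20}, namely that the linear right-hand side involves only the microscopic dissipation and $\delta(b-u)$, while the nonlinear part is bounded by $C(\varepsilon_0^{1/2}+\varepsilon_1^{1/2})\,\delta\mathcal{X}(t)$.
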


\begin{proof}
Similar to   \eqref{AB}, we   derive the equations of $\delta a$ and $\delta b$ as follows:
\begin{equation} \label{CD}
\left\{\begin{aligned}
&\partial_{t}\delta a +{\rm div}_x  \delta b=0,\\
&\partial_{t} \delta b _i+\partial_{i} \delta a +\sum_{j=1}^3\partial_j\Gamma_{ij}(\{\mathbf{I}-\mathbf{P}\}\delta f )= \delta u _i-\delta b _i +(\delta u_{i})a^\varepsilon+u_i(\delta a),  \\
&\partial_{i}\delta b _j+\partial_j \delta b_i- \big((\delta u _i)b^\varepsilon_j+u_i(\delta b_{j})+(\delta u _j)b^\varepsilon_i+u_{j}(\delta b_{i})\big)=-\partial_t \Gamma_{ij}(\{\mathbf{I}-\mathbf{P}\}\delta f)+\Gamma_{ij}(\delta \ell + \delta r  ),  
 \end{aligned}
 \right.
\end{equation}
for $1\leq i,j\leq 3$, where $\delta \ell$ and $\delta r$ are given by
\begin{align*}
\delta \ell :=&\, \mathcal{L}\{\mathbf{I}-\mathbf{P}\}\delta f -v\cdot\nabla_{x}\{\mathbf{I}-\mathbf{P}\}\delta f ,    \\
\delta r :=&\,  -\delta u\cdot\nabla_v\{\mathbf{I}-\mathbf{P}\}f^\varepsilon-u\cdot\nabla_{v}\{\mathbf{I}-\mathbf{P}\}\delta f+\frac{1}{2}\delta u \cdot v\{\mathbf{I}-\mathbf{P}\}f^\varepsilon+\frac{1}{2}u\cdot v\{\mathbf{I}-\mathbf{P}\}\delta f.
\end{align*}
We first compute the dissipation associated with  $\delta b$. Letting $|\alpha|\leq 1$, we have
\begin{align}\label{G5.18}
&2\|\nabla_{x}\partial^\alpha_{x}(\delta b)\|_{L^2}^2+2\|{\rm div}_x  \partial^\alpha_{x}(\delta  b)\|_{L^2}^2\nonumber\\
=&\,\sum_{i,j=1}^3 \|\partial^\alpha_{x}(\partial_{i}\delta b_{j}+\partial_{j}\delta b_{i})\|_{L^2}^2\nonumber\\
=&\,-\frac{{\rm d}}{{\rm d}t}\sum_{i,j=1}^3\int_{\mathbb R^3} \partial^\alpha_{x}(\partial_{i}\delta b_{j}+\partial_{j}\delta b_{i}) \partial^\alpha_{x}\Gamma_{ij}(\{\mathbf{I}-\mathbf{P}\}\delta f)\,{\rm d}x\nonumber\\
&\,+\sum_{i,j=1}^3\int_{\mathbb R^3} \partial^\alpha_{x}(\partial_{i}\partial_{t}\delta b_{j}+\partial_{j}\partial_{t}\delta b_{i}) \partial^\alpha_{x}\Gamma_{ij}(\{\mathbf{I}-\mathbf{P}\}\delta f)\,{\rm d}x\nonumber\\
&\,+ \sum_{i,j=1}^3\int_{\mathbb R^3} \partial^\alpha_{x}(\partial_{i}\delta b_{j}+\partial_{j}\delta b_{i}) \partial^\alpha_{x}\big((\delta u _i)b^\varepsilon_j+u_i(\delta b_{j})+(\delta u _j)b^\varepsilon_i+u_{j}(\delta b_{i}) +\Gamma_{ij}(\delta \ell + \delta r  )         \big) \,{\rm d}x \nonumber\\
\equiv:&\,-\frac{{\rm d}}{{\rm d}t}\sum_{i,j=1}^3\int_{\mathbb R^3} \partial^\alpha_{x}(\partial_{i}\delta b_{j}+\partial_{j}\delta b_{i}) \partial^\alpha_{x}\Gamma_{ij}(\{\mathbf{I}-\mathbf{P}\}\delta f)\,{\rm d}x+\delta K_1+\delta K_2.
\end{align}
Replacing the time derivative of $\delta b$ by \eqref{CD}$_2$, we obtain
\begin{align}\label{G5.19}
\delta K_1=\,&-2 \sum_{i,j=1}^3 \int_{\mathbb R^3}\partial^\alpha_{x} \partial_{t}(\delta b_{i})\partial^\alpha_{x}\partial_{j}\Gamma_{ij}(\{\mathbf{I}-\mathbf{P}\}\delta f)\,{\rm d}x \nonumber\\
=\,& 2\sum_{i,j=1}^3 \int_{\mathbb R^3}\partial_{x}^\alpha \Big(    \partial_{i} \delta a +\sum_{m=1}^3\partial_m\Gamma_{im}(\{\mathbf{I}-\mathbf{P}\}\delta f )\nonumber\\ 
& \quad  \qquad    -(\delta u-\delta b)_{i} -(\delta u_i)a^\varepsilon-u_{i}(\delta a)      \Big) \partial^\alpha_{x}\Gamma_{ij}(\{\mathbf{I}-\mathbf{P}\}\delta f)\,{\rm d}x\nonumber\\
\leq\,& \kappa\|\nabla_{x}\delta a\|_{H^1}^2+C \|\nabla_{x}\{\mathbf{I}-\mathbf{P}\}\delta f\|_{L_v^2(H^1)}^2\nonumber\\
& +C\big( \|\delta (b-u)\|_{H^1}^2+ \|(a^\varepsilon,u)\|_{H^1}^2\|\nabla_{x}(\delta a,\delta u)\|_{H^1}^2     \big)\nonumber\\
\leq\,& \kappa\|\nabla_{x}\delta a\|_{H^1}^2+C\big(   \|\nabla_{x}\{\mathbf{I}-\mathbf{P}\}\delta f\|_{L_v^2(H^1)}^2+\|\delta(b-u)\|_{H^1}^2+\|\nabla_{x}(\delta a,\delta u)\|_{H^1}^2  \big),
\end{align}
where we have utilized Young's inequality and Lemma \ref{L2.1}.
Here and below, $0<\kappa<1$ is a sufficiently small constant. 

Since the function appearing in $\Gamma_{ij}(\cdot)$ can absorb any velocity derivative and any velocity weight, we conclude that
\begin{align}\label{G5.20}
\delta K_2\leq&\, \frac{1}{2} \sum_{i,j=1}^3 \|\partial^\alpha_{x}(\partial_{i}\delta b_{j}+\partial_{j}\delta b_{i})\|_{L^2}^2+C\|\nabla_{x}(\delta u,\delta b)\|_{H^1}^2\|(u,b^\varepsilon)\|_{H^1}^2+C\|\nabla_{x}\delta u\|_{H^1}^2\|f^\varepsilon\|_{ L_v^2(H^2)}^2\nonumber\\
&+C\|u\|_{H^2}^2\|\nabla_{x}\{\mathbf{I}-\mathbf{P}\}\delta f\|_{L_v^2(H^1)}^2\nonumber\\
\leq&\, \frac{1}{2} \sum_{i,j=1}^3 \|\partial^\alpha_{x}(\partial_{i}\delta b_{j}+\partial_{j}\delta b_{i})\|_{L^2}^2+C\Big( \|\nabla_{x}(\delta u,\delta a,\delta b)\|_{H^1}^2+\sum_{1\leq|\alpha|\leq 2}\|\{\mathbf{I}-\mathbf{P}\}\partial^\alpha_{x}\delta f\|_{\nu}^2    \Big).
\end{align}
Putting the estimates \eqref{G5.19}--\eqref{G5.20} into \eqref{G5.18} gives rise to
\begin{align}\label{G5.21}
&\frac{{\rm d}}{{\rm d}t}\sum_{|\alpha|\leq 1}\sum_{i,j=1}^3\int_{\mathbb R^3} \partial^\alpha_{x}(\partial_{i}\delta b_{j}+\partial_{j}\delta b_{i}) \partial^\alpha_{x}\Gamma_{ij}(\{\mathbf{I}-\mathbf{P}\}\delta f)\,{\rm d}x+ (1+ \lambda_1^\prime)\|\nabla_{x} \delta b\|_{H^1}^2 \nonumber\\
&\quad \leq \kappa\|\nabla_{x}\delta a\|_{H^1}^2+C\Big( \|\nabla_{x}(\delta u,\delta a,\delta b)\|_{H^1}^2+\|\delta(b-u)\|_{H^1}^2+\sum_{ |\alpha|\leq 2}\|\{\mathbf{I}-\mathbf{P}\}\partial^\alpha_{x}\delta f\|_{\nu}^2    \Big),
\end{align}
for some constant $\lambda_{1}^\prime>0$.

Now, we focus on the dissipation of $\delta a$.
By direct calculations, we have
\begin{align}\label{G5.22}
\|\partial^\alpha_{x} \nabla_{x}\delta a\|_{L^2}^2=&\,\sum_{i=1}^3 \int_{\mathbb R^3} \partial^\alpha_{x} \partial_{i}(\delta a) \partial^\alpha_{x} \partial_{i}(\delta a)\, {\rm d}x    \nonumber\\
=&\, -\frac{{\rm d}}{{\rm d}t}\sum_{i=1}^3\int_{\mathbb R^3}\partial^\alpha_{x} \partial_{i}(\delta a)\partial^\alpha_{x}(\delta b_{i})\,{\rm d}x+\sum_{i=1}^3\int_{\mathbb R^3}\partial^\alpha_{x} \partial_{i}\partial_{t}(\delta a)\partial^\alpha_{x}(\delta b_{i})\,{\rm d}x\nonumber\\
&+ \sum_{i=1}^3 \int_{\mathbb R^3} \partial^\alpha_{x} \partial_{i}(\delta a)\partial^\alpha_{x}\Big((\delta u_{i}-\delta b_{i})-\sum_{j=1}^3\partial_{j}\Gamma_{ij}(\{\mathbf{I}-\mathbf{P}\}\delta f) +(\delta u_{i})a^\varepsilon+u_i(\delta a)\Big)\,{\rm d}x\nonumber\\
\equiv:&\,-\frac{{\rm d}}{{\rm d}t}\sum_{i=1}^3\int_{\mathbb R^3}\partial^\alpha_{x} \partial_{i}(\delta a)\partial^\alpha_{x}(\delta b_{i})\,{\rm d}x+\delta K_3+\delta K_4.
\end{align}
Using the equation \eqref{CD}$_1$ and integration by parts, we have
\begin{align}\label{G5.23}
\delta K_3=-\int_{\mathbb R^3} \partial^\alpha_{x} \partial_{t}(\delta a)\partial^\alpha_{x} {\rm div}_x  (\delta b)\,{\rm d}x=\|\partial^\alpha_{x} {\rm div}_x  (\delta b)\|_{L^2}^2.   
\end{align}
Thanks to Young’s inequality and Lemma \ref{L2.1}, we derive  that 
\begin{align}\label{G5.24}
\delta K_4\leq&\,  \frac{1}{2} \|\nabla_{x} \partial^\alpha_{x}(\delta a)\|_{L^2}^2+C\|\delta (b-u)\|_{H^1}^2+\|\nabla_{x}\{\mathbf{I}-\mathbf{P}\}\delta f\|_{L_v^2(H^1)}^2+C\|\nabla_{x}(\delta a,\delta u)\|_{H^1}^2\|(a^\varepsilon,u)\|_{H^1}^2\nonumber\\
\leq&\,  \frac{1}{2} \|\nabla_{x} \partial^\alpha_{x}(\delta a)\|_{L^2}^2+C\Big( \|\nabla_{x}(\delta u,\delta a,\delta b)\|_{H^1}^2+\|\delta(b-u)\|_{H^1}^2+\sum_{ |\alpha|\leq 2}\|\{\mathbf{I}-\mathbf{P}\}\partial^\alpha_{x}\delta f\|_{\nu}^2    \Big).
\end{align}
Inserting the estimates \eqref{G5.23}--\eqref{G5.24} into \eqref{G5.22} yields
\begin{align}\label{G5.25}
&-\frac{{\rm d}}{{\rm d}t} \sum_{|\alpha|\leq 1}\int_{\mathbb R^3} \partial^\alpha_{x} (\delta a)\partial^\alpha_{x}{\rm div}_x (\delta b)\,{\rm d}x +\frac{1}{2}\|\nabla_{x}(\delta a)\|_{H^1}^2 \nonumber\\ 
&\quad  \leq\|\nabla_{x}(\delta b)\|_{H^1}^2+C\Big( \|\nabla_{x}(\delta u,\delta a,\delta b)\|_{H^1}^2+\|\delta(b-u)\|_{H^1}^2+\sum_{ |\alpha|\leq 2}\|\{\mathbf{I}-\mathbf{P}\}\partial^\alpha_{x}\delta f\|_{\nu}^2    \Big).
\end{align}

Finally, we introduce the new temporal function $ \mathfrak{E}_0(t)$, given by
\begin{align*}
\mathfrak{E}_0(t):= \sum_{|\alpha|\leq 1}\sum_{i,j=1}^3\int_{\mathbb R^3} \partial^\alpha_{x}(\partial_{i}\delta b_{j}+\partial_{j}\delta b_{i}) \partial^\alpha_{x}\Gamma_{ij}(\{\mathbf{I}-\mathbf{P}\}\delta f)\,{\rm d}x-    \sum_{|\alpha|\leq 1}\int_{\mathbb R^3} \partial^\alpha_{x} (\delta a)\partial^\alpha_{x}{\rm div}_x (\delta b)\,{\rm d}x.
\end{align*}
It is easy to observe that
\begin{align}\label{G5.26}
|\mathfrak{E}_0(t)|\lesssim \|(\delta a,\delta b)\|_{H^2}^2 + \|\{\mathbf{I}-\mathbf{P}\}\delta f\|_{L_v^2(H^2)}^2\lesssim \|\delta f\|_{L_v^2(H^2)}^2 .  
\end{align}
Combining \eqref{G5.21} and  \eqref{G5.25}, we further obtain
\begin{align*} 
\frac{{\rm d}}{{\rm d}t}\mathfrak{E}_0(t)+\lambda_{2}^\prime \|\nabla_{x
}(\delta a,\delta b)\|_{H^1}^2\lesssim  C\Big( \|\nabla_{x}(\delta u,\delta a,\delta b)\|_{H^1}^2+\|\delta(b-u)\|_{H^1}^2+\sum_{ |\alpha|\leq 2}\|\{\mathbf{I}-\mathbf{P}\}\partial^\alpha_{x}\delta f\|_{\nu}^2    \Big), 
\end{align*}
for some constant $\lambda_{2}^\prime>0$,
which implies that 
\begin{align}\label{G5.27}
\mathfrak{E}_0(t) +\lambda_{2}^\prime\int_0^t\|\nabla_{x}(\delta a,\delta b)\|_{H^1}^2\,{\rm d}\tau\leq    \mathfrak{E}_0(0)+ C\delta\mathcal{X}(t).
\end{align}
Thus, based on the estimates \eqref{G5.3}, \eqref{G5.10}, \eqref{G5.26} and \eqref{G5.27}, we get \eqref{G5.16}. This completes the proof of Lemma \ref{L5.3}.
\end{proof}

Finally, we give the dissipation of $\delta q$.
\begin{lem}\label{L5.4}
It holds that
\begin{align}\label{G5.28}
\int_0^t \|\nabla_{x}(\delta q)(\tau)\|_{H^1}^2\,{\rm d}\tau\leq  \delta \mathcal{X }(0)+C\varepsilon^2+  C\delta\mathcal{X}(t),  
\end{align}
where $C > 0$ is a constant independent of $\varepsilon$ and time $t$.
\end{lem}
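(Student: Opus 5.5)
We mimic the proof of Lemma \ref{L3.4}, now applied to the error system \eqref{G5.2}. For $|\alpha|\leq 1$, we test $\partial^\alpha_x$ of \eqref{G5.2}$_2$, multiplied by $\varepsilon$, against $\nabla_x\partial^\alpha_x\delta q$. We first split
\[
\frac{P'(1+\varepsilon q^\varepsilon)}{1+\varepsilon q^\varepsilon}=P'(1)+\Big(\frac{P'(1+\varepsilon q^\varepsilon)}{1+\varepsilon q^\varepsilon}-P'(1)\Big).
\]
This produces the identity
\begin{align*}
P'(1)\|\nabla_x\partial^\alpha_x\delta q\|_{L^2}^2={}&-\varepsilon\int_{\mathbb R^3}\nabla_x\partial^\alpha_x\delta q\cdot\partial^\alpha_x\partial_t\delta u\,{\rm d}x\\
&+\varepsilon\int_{\mathbb R^3}\nabla_x\partial^\alpha_x\delta q\cdot\partial^\alpha_x\big(-u^\varepsilon\cdot\nabla_x\delta u+\delta(b-u)-a\delta u-(\delta a)u^\varepsilon+\delta F_2+\delta F_3\big)\,{\rm d}x\\
&-\int_{\mathbb R^3}\nabla_x\partial^\alpha_x\delta q\cdot\partial^\alpha_x\Big(\Big(\frac{P'(1+\varepsilon q^\varepsilon)}{1+\varepsilon q^\varepsilon}-P'(1)\Big)\nabla_x\delta q\Big)\,{\rm d}x.
\end{align*}
The last term is bounded by $C\|q^\varepsilon\|_{H^3}\|\nabla_x\delta q\|_{H^1}^2\lesssim\varepsilon_0^{1/2}\|\nabla_x\delta q\|_{H^1}^2$, because every factor of the coefficient carries $\varepsilon q^\varepsilon$ and the explicit $\varepsilon^{-1}$ has already been removed by multiplying through by $\varepsilon$.

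The key step is the time-derivative term. We write it as $-\varepsilon\frac{{\rm d}}{{\rm d}t}\int\nabla_x\partial^\alpha_x\delta q\cdot\partial^\alpha_x\delta u\,{\rm d}x$ plus $\varepsilon\int\nabla_x\partial^\alpha_x\partial_t\delta q\cdot\partial^\alpha_x\delta u\,{\rm d}x$. In the second piece we integrate by parts and substitute \eqref{G5.2}$_1$, namely $\varepsilon\partial_t\delta q=-\varepsilon u^\varepsilon\cdot\nabla_x\delta q-(1+\varepsilon q^\varepsilon){\rm div}_x\delta u+\varepsilon\delta F_1$. The singular factor is absorbed exactly, and what remains is
\begin{align*}
&\int_{\mathbb R^3}\partial^\alpha_x\big((1+\varepsilon q^\varepsilon){\rm div}_x\delta u+\varepsilon u^\varepsilon\cdot\nabla_x\delta q\big)\,\partial^\alpha_x{\rm div}_x\delta u\,{\rm d}x
-\varepsilon\int_{\mathbb R^3}\partial^\alpha_x\delta F_1\,\partial^\alpha_x{\rm div}_x\delta u\,{\rm d}x.
\end{align*}
Each piece is bounded by $C\|\nabla_x\delta u\|_{H^1}^2+\kappa\|\nabla_x\delta q\|_{H^1}^2$, plus the source contribution. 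For the source we have $\varepsilon\|\partial^\alpha_x\delta F_1\|_{L^2}\lesssim\varepsilon^2\big(\|\nabla_x D_t\pi\|_{L^2}+\|\delta u\|_{H^2}\|\nabla_x\pi\|_{H^1}\big)$. Note that $\alpha=0$ requires $D_t\pi$ itself and not its gradient; for this case we keep the term paired with $\delta q$ instead of $\delta u$ and use \eqref{TGGG3} through $\dot H^{-1}$--$\dot H^1$ duality, as in \eqref{G5.8}. We then apply Young's inequality and \eqref{TGGG2}--\eqref{TGGG3}, so that after time integration this contributes $\lesssim\varepsilon^2+\varepsilon_1\delta\mathcal X(t)$.

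The remaining right-hand terms are handled by Cauchy--Schwarz with $\kappa\|\nabla_x\partial^\alpha_x\delta q\|^2$ on one side. They cost $C\varepsilon^2\big(\|\delta(b-u)\|_{H^1}^2+\|\nabla_x(\delta u,\delta a)\|_{H^1}^2\big)$, plus $\varepsilon^2\|\delta F_2+\delta F_3\|_{H^1}^2$. The pieces of $\delta F_2,\delta F_3$ that do not involve the errors are $\frac{\varepsilon q^\varepsilon}{1+\varepsilon q^\varepsilon}(b-u-au)$ and $\big(\cdots-1\big)\nabla_x\pi=O(\varepsilon q^\varepsilon)\nabla_x\pi$. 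We bound their $L^2_tH^1$ norms by $\varepsilon\varepsilon_0^{1/2}$ times $\|b-u\|_{L^2_tH^1}+\|\nabla_x\pi\|_{L^2_tH^1}$, which is $\lesssim\varepsilon_1^{1/2}$ by \eqref{TGG2} and \eqref{TGGG2}. These therefore give $O(\varepsilon^2)$ after time integration.

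Finally, we sum over $|\alpha|\leq1$, choose $\kappa$ small, and integrate in time. The boundary term $\varepsilon\int\nabla_x\partial^\alpha_x\delta q\cdot\partial^\alpha_x\delta u\,{\rm d}x$ at times $t$ and $0$ is bounded by $\varepsilon\|(\delta q,\delta u)\|_{H^2}^2\leq\delta\mathcal X(t)$ and $\delta\mathcal X(0)$. Every other term is bounded by $C\delta\mathcal X(t)$ using the definition \eqref{G5.1} together with $\int_0^t\|\nabla_x\delta u\|_{H^1}^2\lesssim\delta\mathcal X(t)$, or by $C\varepsilon^2$. This yields \eqref{G5.28}.

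The main obstacle is the source $\delta F_1$ at zero order. That is where the weak information on $D_t\pi$, available only in $L^2_t\dot H^{-1}$, has to be used instead of a gradient bound. If it cannot be routed through the $\delta q$ pairing, we would instead use $\|\nabla_x D_t\pi\|_{L^2_tH^1}$ only for $|\alpha|=1$, and for $\alpha=0$ integrate by parts once more onto $\delta u$ so that $D_t\pi$ lands in $\dot H^{-1}$ against $\nabla_x{\rm div}_x\delta u\in L^2_t\dot H^1$.
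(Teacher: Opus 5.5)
Your proposal is correct and follows the paper's proof of Lemma~\ref{L5.4} essentially step by step: you test $\varepsilon\,\partial^\alpha_x$\eqref{G5.2}$_2$ against $\nabla_x\partial^\alpha_x\delta q$, split the pressure coefficient around $P'(1)$, and handle the time-derivative term by substituting \eqref{G5.2}$_1$, with boundary terms of size $\varepsilon\|(\delta q,\delta u)\|_{H^2}^2$, exactly as the paper does. Your extra care at $\alpha=0$ is warranted, since the bound $C\varepsilon^2\|\nabla_x D_t\pi\|_{H^1}^2$ in \eqref{G5.30} only covers $|\alpha|=1$; the fix that works is your fallback (the source is necessarily paired with ${\rm div}_x\delta u$, not with $\delta q$, so your first option does not apply), namely $\varepsilon^2\bigl|\int_{\mathbb R^3} D_t\pi\,{\rm div}_x\delta u\,{\rm d}x\bigr|\le\varepsilon^2\|D_t\pi\|_{\dot H^{-1}}\|{\rm div}_x\delta u\|_{\dot H^1}$ combined with \eqref{TGGG3}, which needs no further integration by parts.
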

\begin{proof}
Let $|\alpha|\leq 1$. Applying the operator $\partial^\alpha_{x}$ to \eqref{G5.2}$_2$ and then performing a basic $L^2$ energy estimate, we have
\begin{align}\label{G5.29}
&{P^\prime(1)}\|\nabla_{x}\partial^\alpha_{x}(\delta q)\|_{L^2}^2\nonumber\\
=\, &-{\varepsilon}\int_{\mathbb{R}^3}\nabla_{x}\partial^\alpha_{x}(\delta q)\cdot\partial^\alpha_x\partial_t (\delta u) \,\mathrm{d}x+{\varepsilon}\int_{\mathbb{R}^3}\nabla_{x}\partial^\alpha_{x}(\delta q)\cdot\partial^\alpha_{x}\big(\delta(b-u)-a(\delta u)-(\delta a)u^\varepsilon\big)\,\mathrm{d}x   \nonumber\\
&-{\varepsilon}\int_{\mathbb{R}^3}\nabla_{x}\partial^\alpha_{x}(\delta q )\cdot\partial^\alpha_{x}\big(u^\varepsilon\cdot\nabla_{x} (\delta u) \big)\,\mathrm{d}x -\int_{\mathbb{R}^3}\nabla_{x}\partial^\alpha_{x
}(\delta q)\cdot\partial^\alpha_{x}\Big(\Big(\frac{P^{\prime}(1+\varepsilon q^\varepsilon)}{1+\varepsilon q^\varepsilon}-P^{\prime}(1)       \Big)\nabla_{x}(\delta q) \Big)\,\mathrm{d}x \nonumber\\ 
&+\varepsilon \int_{\mathbb R^3}\nabla_{x}\partial^\alpha_{x}(\delta q) \cdot \partial^\alpha_{x}(\delta F_2+\delta F_3)\,{\rm d}x\nonumber\\
\equiv:\,&\sum_{j=1}^5\delta L_{j}.
\end{align}
Applying \eqref{G5.2}$_1$, integration by parts and Young's inequality yields
\begin{align}\label{G5.30}
\delta L_1=& -\varepsilon\frac{{\rm d}}{{\rm d}t} \int_{\mathbb R^3}\nabla_{x}\partial^\alpha_{x}(\delta q)\cdot  \partial^\alpha_{x} (\delta u)\,{\rm d}x+\varepsilon\int_{\mathbb R^3}\nabla_{x}\partial^\alpha_{x}\partial_{t} (\delta q)\cdot\partial^\alpha_{x}(\delta u)\,{\rm d}x \nonumber\\
=&-\varepsilon\frac{{\rm d}}{{\rm d}t} \int_{\mathbb R^3}\nabla_{x}\partial^\alpha_{x}(\delta q)\cdot  \partial^\alpha_{x} (\delta u)\,{\rm d}x\nonumber\\
&+ \varepsilon\int_0^t\partial^\alpha_{x}\Big(u^\varepsilon\cdot\nabla_{x}(\delta q) +\frac{1+\varepsilon q^\varepsilon}{\varepsilon}\nabla_{x} \cdot(\delta u)-\delta F_1\Big){\rm div}_x \partial^\alpha_{x}(\delta u)\,{\rm d}x \nonumber\\
\leq\,& -\varepsilon\frac{{\rm d}}{{\rm d}t} \int_{\mathbb R^3}\nabla_{x}\partial^\alpha_{x}(\delta q)\cdot  \partial^\alpha_{x} (\delta u)\,{\rm d}x+ C(1+\|q^\varepsilon\|_{H^3})\|\nabla_{x}\delta u\|_{H^1}^2+ C\|u^\varepsilon\|_{H^3}^2\|\nabla_{x}(\delta q)\|_{H^1}^2\nonumber\\
&+ C\varepsilon^2\|\nabla_{x}D_{t}\pi\|_{H^1}^2+C\varepsilon^2 \|\nabla_{x
}\delta u\|_{H^1}^2 \|\nabla_{x}\pi\|_{H^2}^2 \nonumber\\
\leq& -\varepsilon\frac{{\rm d}}{{\rm d}t} \int_{\mathbb R^3}\nabla_{x}\partial^\alpha_{x}(\delta q)\cdot  \partial^\alpha_{x} (\delta u)\,{\rm d}x+C\varepsilon_0\|\nabla_{x}\delta q\|_{H^1}^2+C\|\nabla_{x}\delta u\|_{H^1}^2+C\varepsilon^2\|\nabla_{x}D_{t}\pi\|_{H^1}^2.
\end{align}
By means of H\"{o}lder’s and Young’s inequalities and Lemma \ref{L2.1}, we obtain the following estimate for the terms $\delta L_{2}, \dots, \delta L_5$:
\begin{align}\label{G5.31}
\delta L_2+\delta L_3+\delta L_4\leq&\, \frac{1}{3} P^\prime(1) \|\nabla_{x}\partial^\alpha_{x}(\delta q)\|_{L^2}^2+ C \big(  \|\delta (b-u)\|_{H^1}^2+\|\nabla_{x}(\delta a,\delta u)\|_{H^1}^2  \big) ,\\ \label{G5.32}
\delta L_{5}\leq&\, \frac{1}{3} P^\prime(1) \|\nabla_{x}\partial^\alpha_{x}(\delta q)\|_{L^2}^2+C\big(  \|\delta (b-u)\|_{H^1}^2+\|\nabla_{x}(\delta a,\delta b,\delta u)\|_{H^1}^2  \big) \nonumber\\
&+C\varepsilon^2 \big(\|\nabla_{x}\pi\|_{H^2}^2+\|b-u\|_{H^1}^2+\|\nabla_{x} u\|_{H^2}^2\big).
\end{align}
Putting the estimates \eqref{G5.30}--\eqref{G5.32} into \eqref{G5.29}, we end up with
\begin{align}\label{G5.33}
&\varepsilon\frac{{\rm d}}{{\rm d}t}\sum_{|\alpha|\leq 1} \int_{\mathbb R^3}\nabla_{x}\partial^\alpha_{x}(\delta q)\cdot  \partial^\alpha_{x} (\delta u)\,{\rm d}x  +\lambda^\prime
_{3}\|\nabla_{x}q\|_{H^1}^2\nonumber\\
&\quad \lesssim   \|\delta (b-u)\|_{H^1}^2+\|\nabla_{x}(\delta a,\delta b,\delta u)\|_{H^1}^2\nonumber\\
&\qquad  +\varepsilon^2  \big(\|\nabla_{x}D_{t}\pi\|_{H^1}^2+\|\nabla_{x}\pi\|_{H^2}^2+\|b-u\|_{H^1}^2+\|\nabla_{x} u\|_{H^2}^2\big),
\end{align}
for some constant $\lambda_{3}^\prime>0$.
Since
\begin{align*}
\varepsilon\Big|\sum_{|\alpha|\leq 1} \int_{\mathbb R^3}\nabla_{x}\partial^\alpha_{x}(\delta q)\cdot  \partial^\alpha_{x} (\delta u)\,{\rm d}x \Big|\lesssim \varepsilon\|\delta q\|_{H^2}^2+\varepsilon\|\delta u\|_{H^2}^2\lesssim \varepsilon\delta \mathcal{X}(t),    
\end{align*}
the time integration of \eqref{G5.33} combined with Proposition \ref{prop3} yields \eqref{G5.28}, and thus we complete the proof of Lemma \ref{L5.4}.
\end{proof}

With Lemmas \ref{L5.1}--\ref{L5.4} in hand, we proceed to prove Theorem \ref{Th4}.
\begin{proof}[Proof of Theorem \ref{Th4}]
It follows from \eqref{G5.3}, \eqref{G5.10}, \eqref{G5.16}, \eqref{G5.28} and the assumption \eqref{TD1} that
\begin{align*}
\delta\mathcal{X}(t)\lesssim  \big( \varepsilon_0^\frac{1}{2}+\varepsilon_1^\frac{1}{2}\big) \delta\mathcal{X}(t)+\varepsilon^2,   
\end{align*}
which, together with the smallness of $\varepsilon_0$ in \eqref{TG1} and $\varepsilon_1$ in \eqref{TGG1}, gives
\begin{align*}
  \delta\mathcal{X}(t)\lesssim \varepsilon^2.  
\end{align*}
Hence, the global estimate \eqref{TD2} is rigorously verified, thereby completing the proof of Theorem \ref{Th4}.
\end{proof}

\subsection{Proof of Theorem \ref{Th5}}
In this subsection, we prove the low Mach number limit of the compressible Euler--VFP system \eqref{A1}. The key point is that the error estimate \eqref{TD2} is directly formulated in a time-continuous topology. Consequently, once \eqref{TD2} is derived, the convergence occurs at an explicit rate that is uniform in \(C(\mathbb{R}^+;H^2)\). Specifically, it holds in \(C([0,T];H^2)\) for any \(T > 0\) with a rate independent of \(T\) and without any exponential growth factor \(e^{CT}\), and the limiting equations can be determined by taking the limit term by term. Notably, in contrast to the proof in \cite[Theorem 1.2]{LNW-2026}, no compactness argument, such as the Aubin-Lions's lemma, is required here.

\begin{proof}[Proof of Theorem \ref{Th5}]
First, we prove the strong convergence in \eqref{convergence} in terms of the error variables.
From Remark \ref{Rem1.2} and the estimate \eqref{TD2}, it follows that  
\begin{align*}
\sup_{t\geq 0}\|q^\varepsilon(t)\|_{H^2}\leq  C \sup_{t
\geq 0}\big\|\big(q^\varepsilon- \varepsilon {P^{\prime}(1)}^{-1} \pi\big)(t)\big\|_{H^2} + C\varepsilon\sup_{t\geq 0}\|\pi(t)\|_{H^2}\leq C\varepsilon,
\end{align*}
for some constant $C>0$ independent of time, which results in
\begin{align*}
 q^\varepsilon\rightarrow  0\quad \text{strongly~~~in}\quad C(\mathbb R^+;H^2)\quad \text{as}\quad \varepsilon\rightarrow0.
\end{align*}
Thus, we have
\begin{align*}
 \rho^\varepsilon-1\rightarrow 0\quad \text{strongly~~~in}\quad C(\mathbb R^+;H^2)\quad \text{as}\quad \varepsilon\rightarrow0.
\end{align*}
Due to the embedding $H^2(\mathbb{R}^3) \hookrightarrow L^\infty(\mathbb{R}^3)$, we further obtain 
\begin{align*}
 \rho^\varepsilon\rightarrow 1\quad \text{strongly~~~in}\quad L^\infty(\mathbb R^+;L^\infty)\quad \text{as}\quad \varepsilon\rightarrow0.
\end{align*}

For the convergence of $u^\varepsilon$ and $f^\varepsilon$, from \eqref{TD2}, we directly get  
\begin{align*}
  u^\varepsilon&\,\rightarrow u\quad \text{strongly~~~in}\quad C(\mathbb R^+;H^2)\quad\quad\,\,\,\,\, \text{as}\quad \varepsilon\rightarrow0,\\
    f^\varepsilon&\,\rightarrow f\quad \text{strongly~~~in}\quad C(\mathbb R^+;L_v^2(H^2))\quad \text{as}\quad \varepsilon\rightarrow0.
\end{align*}

For the convergence of $a^\varepsilon$ and $b^\varepsilon$, with the help of Cauchy--Schwarz's inequality, we have
\begin{align*}
\|(a-a^\varepsilon)(t)\|_{H^2}\lesssim &\, \Big(\int_{\mathbb R^3} M(v)\,{\rm d}v\Big)^\frac{1}{2}    \|(f^\varepsilon-f)(t)\|_{L_v^2(H^2)}\lesssim \|(f^\varepsilon-f)(t)\|_{L_v^2(H^2)}
\end{align*}
and 
\begin{align*}
\|(b-b^\varepsilon)(t)\|_{H^2}\lesssim &\, \Big(\int_{\mathbb R^3} |v|^2M(v)\,{\rm d}v\Big)^\frac{1}{2}    \|(f^\varepsilon-f)(t)\|_{L_v^2(H^2)}\lesssim \|(f^\varepsilon-f)(t)\|_{L_v^2(H^2)},
\end{align*}
which implies that
\begin{align*}
 (a^\varepsilon,b^\varepsilon)\rightarrow (a,b)\quad \text{strongly~~~in}\quad C(\mathbb R^+;H^2)\quad \text{as}\quad \varepsilon\rightarrow0.
\end{align*}

To prove the convergence of $\mathbb Qu^\varepsilon$ in \eqref{convergence}, we require the incompressibility condition ${\nabla_{x}}\cdot u = 0$. Its validity will be shown below. 
To avoid singularities, we multiply   \eqref{A1}$_1$ by $\varepsilon$, and then get
\begin{align}\label{G5.34}
\rho^\varepsilon {\rm div}_x  u^\varepsilon=-\varepsilon \partial_{t}q^\varepsilon-\varepsilon u^\varepsilon\cdot \nabla_{x}q^\varepsilon.    
\end{align}
For any $T > 0$ and   test function $\phi\in C_{c}^\infty((0,T)\times \mathbb R^3)$, using integration by parts and Cauchy--Schwarz's inequality, we have
\begin{align*}
\Big |\int_0^T \!\!\int_{\mathbb R^3}\varepsilon \partial_{t} q^\varepsilon \phi\,{\rm d}x{\rm d}t\Big |=&\,\Big|-\varepsilon\int_0^T\!\!\int_{\mathbb R^3} q^\varepsilon \partial_{t}\phi \,{\rm d}x{\rm d}t\Big |\lesssim \varepsilon\|q^\varepsilon\|_{L_t^\infty(L^2)} \|\partial_{t}\phi\|_{L_t^1(L^2)},
\end{align*}
which yields
\begin{align}\label{G5.35}
\int_0^T \!\!\int_{\mathbb R^3}\varepsilon \partial_{t} q^\varepsilon \phi\,{\rm d}x{\rm d}t\rightarrow 0 \quad \text{as}\quad   \varepsilon\rightarrow 0.  
\end{align}
For the second term on the right-hand side of \eqref{G5.34}, we have
\begin{align*}
\Big|\int_0^T \!\! \int_{\mathbb R^3}\varepsilon u^\varepsilon\cdot \nabla_{x}q^\varepsilon \phi\,{\rm d}x{\rm d}\tau\Big|\lesssim \varepsilon \|u^\varepsilon\|_{L_t^\infty(H^2)}\|q^\varepsilon\|_{L_t^\infty(H^2)}\|\phi\|_{L_t^1(L^2)},
\end{align*}
which leads to
\begin{align}\label{G5.36}
\int_0^T \!\! \int_{\mathbb R^3}\varepsilon u^\varepsilon\cdot \nabla_{x}q^\varepsilon \phi\,{\rm d}x{\rm d}\tau\rightarrow 0 \quad \text{as}\quad   \varepsilon\rightarrow 0.  
\end{align}
Combining \eqref{G5.35} and \eqref{G5.36}, we arrive at
\begin{align}\label{G5.37}
\rho^\varepsilon {\rm div}_x  u^\varepsilon\rightarrow 0\quad \text{in} \quad \mathcal{D}^\prime ((0,T)\times\mathbb R^3) \quad \text{as}\quad   \varepsilon\rightarrow 0.  
\end{align}

Let us return to the left-hand side of \eqref{G5.34} and decompose $\rho^\varepsilon{\rm div}_x  u^\varepsilon - {\rm div}_x  u$ as  
\begin{align*}
 \rho^\varepsilon{\rm div}_x  u^\varepsilon - {\rm div}_x  u=  (\rho^\varepsilon-1){\rm div}_x  u^\varepsilon+ {\rm div}_x (u^\varepsilon-u).
\end{align*}
Based on the fact that
\begin{align*}
&\Big|\int_0^T \!\! \int_{\mathbb R^3} \big( (\rho^\varepsilon-1){\rm div}_x  u^\varepsilon+ {\rm div}_x (u^\varepsilon-u)\big) \phi\,{\rm d}x{\rm d}\tau\Big|\nonumber\\
&\quad \lesssim \big(\|\rho^\varepsilon-1\|_{L_t^\infty(H^2)} \|u^\varepsilon\|_{L_t^\infty(H^2)}+\|u^\varepsilon-u\|_{L_t^\infty(H^2)}\big) \|\phi\|_{L_t^1(L^2)}  \rightarrow0 \quad \text{as}\quad \varepsilon\rightarrow0,
\end{align*}
we have
\begin{align*}
 \rho^\varepsilon {\rm div}_x  u^\varepsilon\rightarrow {\rm div}_x  u\quad \text{in} \quad \mathcal{D}^\prime ((0,T)\times\mathbb R^3) \quad \text{as}\quad   \varepsilon\rightarrow 0,
\end{align*}
which, together with \eqref{G5.37} and the arbitrariness of the test function $\phi$, yields
\begin{align}\label{G5.38}
 {\rm div}_x  u=0\quad \text{in} \quad \mathcal{D}^\prime ((0,T)\times\mathbb R^3),   
\end{align}
for any $T>0$. 
With the above preparations, it is easy to prove the convergence of $\mathbb Qu^\varepsilon$ in \eqref{convergence}.
In fact, since $\mathbb Q$ is a bounded zero-order Fourier multiplier on $\mathbb R^3$, making use of \eqref{G5.38}, we obtain
\begin{align*}
 \mathbb Q u^\varepsilon\rightarrow 0  \quad \text{strongly~~~in}\quad C(\mathbb R^+;H^2) \quad \text{as}\quad \varepsilon\rightarrow0.  
\end{align*}
Consequently, we complete the proof of \eqref{convergence}. 

Next, we show the convergence of  \eqref{A1}$_2$. We rewrite \eqref{A1}$_2$ as
\begin{align}\label{G5.39}
\nabla_{x}\Pi^\varepsilon:=\frac{1}{\varepsilon^2} \frac{P^\prime(\rho^\varepsilon)}{\rho^\varepsilon}\nabla_{x}\rho^\varepsilon=  -\partial_{t }u^\varepsilon-u^\varepsilon\cdot\nabla_{x}u^\varepsilon+\frac{b^\varepsilon-u^\varepsilon-a^\varepsilon u^\varepsilon}{\rho^\varepsilon}.
\end{align}
The definition of $\nabla_{x}\Pi^\varepsilon$ is valid. In fact, denoting $h^\prime(r)=\frac{P^\prime(r)}{r}$, we have
\begin{align*}
 \frac{1}{\varepsilon^2} \frac{P^\prime(\rho^\varepsilon)}{\rho^\varepsilon}\nabla_{x}\rho^\varepsilon=\nabla_{x}\Big(   \frac{h(\rho^\varepsilon)-h(1)}{\varepsilon^2}  \Big) .
\end{align*}
Then, to handle the pressure term, we apply the projection operator $\mathbb{P}$ to \eqref{G5.39} and obtain
\begin{align}\label{G5.40}
\partial_{t}\mathbb Pu^\varepsilon+\mathbb P(u^\varepsilon\cdot\nabla_{x} u^\varepsilon) =  \mathbb P \Big(  \frac{b^\varepsilon-u^\varepsilon-a^\varepsilon u^\varepsilon}{\rho^\varepsilon}  \Big).
\end{align}
For any test function $\Phi\in C_{c}^\infty((0,T)\times \mathbb R^3)$, it is easy to check that
\begin{align*}
\Big |\int_0^T \!\!\int_{\mathbb R^3} \partial_{t} \mathbb{P} (u^\varepsilon-u) \cdot \Phi\,{\rm d}x{\rm d}t\Big |=&\,\Big|-\int_0^T\!\!\int_{\mathbb R^3} \mathbb P (u^\varepsilon-u)\cdot \partial_{t}\Phi \,{\rm d}x{\rm d}t\Big |\nonumber\\
\lesssim&\,  \| u^\varepsilon-u\|_{L_t^\infty(L^2)} \|\partial_{t}\Phi\|_{L_t^1(L^2)}\rightarrow0 \quad \text{as} \quad \varepsilon\rightarrow0.
\end{align*}
Therefore, using \eqref{G5.38},  we derive that
\begin{align}\label{G5.41}
\partial_{t}\mathbb Pu^\varepsilon\rightarrow \partial_{t}u    \quad \text{in} \quad \mathcal{D}^\prime ((0,T)\times\mathbb R^3) \quad \text{as}\quad   \varepsilon\rightarrow 0. 
\end{align}
Similarly, we   easily deduce that
\begin{align}\label{G5.42}
\mathbb P(u^\varepsilon\cdot\nabla_{x} u^\varepsilon)\rightarrow\mathbb P(u\cdot\nabla_{x}u)    \quad \text{in} \quad \mathcal{D}^\prime ((0,T)\times\mathbb R^3) \quad \text{as}\quad   \varepsilon\rightarrow 0,
\end{align}
and
\begin{align}\label{G5.43}
\mathbb P \Big(  \frac{b^\varepsilon-u^\varepsilon-a^\varepsilon u^\varepsilon}{\rho^\varepsilon}  \Big)\rightarrow \mathbb P(b-u-au)   \quad \text{in} \quad \mathcal{D}^\prime ((0,T)\times\mathbb R^3) \quad \text{as}\quad   \varepsilon\rightarrow 0.
\end{align}
It follows from \eqref{G5.41}--\eqref{G5.43} that
\begin{align*}
\partial_{t} u+\mathbb{P}   (u\cdot\nabla_{x}u)= \mathbb P(b-u-au)   \quad \text{in} \quad \mathcal{D}^\prime ((0,T)\times\mathbb R^3),
\end{align*}
which gives rise to
\begin{align}\label{G5.44}
\partial_{t} u+    u\cdot\nabla_{x}u +\nabla_{x}\pi=   b-u-au    \quad \text{in} \quad \mathcal{D}^\prime ((0,T)\times\mathbb R^3).
\end{align}
for some function $\pi$.

Finally, we consider the kinetic equation \eqref{A1}$_3$.
For any test function $\psi\in C_{c}((0,T)\times\mathbb R^3\times\mathbb R^3)$, we   deduce that as $\varepsilon\rightarrow0$,
\begin{align*}
&\int_0^T \!\!\int_{\mathbb R^3}   \!\!\int_{\mathbb R^3}   \partial_{t}f^\varepsilon\psi\, {\rm d}x{\rm d}v{\rm d}t
=-\int_0^T \!\!\int_{\mathbb R^3}   \!\!\int_{\mathbb R^3}   f^\varepsilon\partial_{t}\psi\, {\rm d}x{\rm d}v{\rm d}t\nonumber\\
&\quad \rightarrow -\int_0^T \!\!\int_{\mathbb R^3}   \!\!\int_{\mathbb R^3}   f \partial_{t}\psi\, {\rm d}x{\rm d}v{\rm d}t=\int_0^T \!\!\int_{\mathbb R^3}   \!\!\int_{\mathbb R^3}   \partial_{t}f \psi\, {\rm d}x{\rm d}v{\rm d}t ,
\end{align*}
and
\begin{align*}
&\int_0^T \!\!\int_{\mathbb R^3}   \!\!\int_{\mathbb R^3}    v\cdot\nabla_{x}f^\varepsilon\psi\, {\rm d}x{\rm d}v{\rm d}t
=-\int_0^T \!\!\int_{\mathbb R^3}   \!\!\int_{\mathbb R^3}  f^\varepsilon v\cdot \nabla_{x}\psi\, {\rm d}x{\rm d}v{\rm d}t\nonumber\\
&\quad \rightarrow -\int_0^T \!\!\int_{\mathbb R^3}   \!\!\int_{\mathbb R^3}   f v\cdot \nabla_{x} \psi\, {\rm d}x{\rm d}v{\rm d}t=\int_0^T \!\!\int_{\mathbb R^3}   \!\!\int_{\mathbb R^3}  v\cdot\nabla_{x}f \psi\, {\rm d}x{\rm d}v{\rm d}t.
\end{align*}
Thus, we have
\begin{align}\label{G5.45}
\partial_{t}f^\varepsilon+v\cdot\nabla_{x}f^\varepsilon\rightarrow    \partial_{t}f +v\cdot\nabla_{x}f \quad \text{in} \quad \mathcal{D}^\prime ((0,T)\times\mathbb R^3\times\mathbb R^3) \quad \text{as}\quad   \varepsilon\rightarrow 0.
\end{align}
In addition, observe that
\begin{align*}
  &\Big|\int_0^T \!\!\int_{\mathbb R^3}   \!\!\int_{\mathbb R^3}  (f^\varepsilon u^\varepsilon-fu)\cdot \nabla_{v}\psi\, {\rm d}x{\rm d}v{\rm d}t \Big|\lesssim  \big(\|f^\varepsilon-f\|_{L_t^\infty(H^2)}\|u\|_{L_t^\infty(H^2)}
 \nonumber\\
  &\qquad\qquad\qquad +\|f^\varepsilon\|_{L_t^\infty(H^2)}\|u^\varepsilon-u\|_{L_t^\infty(H^2)}\big) \|\nabla_{v}\psi\|_{L_t^1(L_{v}^2(L^2))} \rightarrow 0 \quad \text{as}\quad \varepsilon\rightarrow0.
\end{align*}
Then, we arrive at
\begin{align}\label{G5.46}
u\cdot\nabla_{v}f^\varepsilon\rightarrow     u\cdot\nabla_{v}f \quad \text{in} \quad \mathcal{D}^\prime ((0,T)\times\mathbb R^3\times\mathbb R^3) \quad \text{as}\quad   \varepsilon\rightarrow 0.
\end{align}
Similarly, we get
\begin{align}\label{G5.47}
-\frac{1}{2}u^\varepsilon\cdot vf^\varepsilon-u^\varepsilon\cdot v\sqrt{M}\rightarrow  -\frac{1}{2}   u\cdot {v}f-u\cdot v\sqrt{M} \quad \text{in} \quad \mathcal{D}^\prime ((0,T)\times\mathbb R^3\times\mathbb R^3) \quad \text{as}\quad   \varepsilon\rightarrow 0.
\end{align}
For the last term containing the operator $\mathcal{L}$, we use its self-adjointness property to obtain
\begin{align*}
& \int_0^T \!\!\int_{\mathbb R^3}    \langle\mathcal{L} f^\varepsilon,\psi\rangle\,{\rm d}x {\rm d}t
= \int_0^T \!\!\int_{\mathbb R^3}    \langle f^\varepsilon,\mathcal{L}\psi\rangle\,{\rm d}x {\rm d}t 
\nonumber\\ 
&\quad \rightarrow  \int_0^T \!\!\int_{\mathbb R^3}    \langle f ,\mathcal{L}\psi\rangle\,{\rm d}x {\rm d}t= \int_0^T \!\!\int_{\mathbb R^3}    \langle\mathcal{L} f ,\psi\rangle\,{\rm d}x {\rm d}t\quad\text{as}\quad \varepsilon\rightarrow0,
\end{align*}
which yields
\begin{align}\label{G5.48}
 \mathcal{L}f^\varepsilon\rightarrow   \mathcal{L}f\quad \text{in} \quad \mathcal{D}^\prime ((0,T)\times\mathbb R^3\times\mathbb R^3) \quad \text{as}\quad   \varepsilon\rightarrow 0.    
\end{align}
Consequently, it follows from \eqref{G5.45}, \eqref{G5.46}, \eqref{G5.47} and \eqref{G5.48} that
\begin{align}\label{G5.49}
 \partial_t f +v\cdot\nabla_{x} f+ u \cdot\nabla_{v}f -\frac{1}{2}u \cdot v f -u \cdot v\sqrt{M}=\mathcal{L} f  \quad \text{in} \quad \mathcal{D}^\prime ((0,T)\times\mathbb R^3\times\mathbb R^3).    
\end{align}
Collecting \eqref{G5.38}, \eqref{G5.44} and \eqref{G5.49} together, we deduce that \eqref{A2} holds in the sense of distributions, and hence complete the proof of Theorem \ref{Th5}.
\end{proof}

\bigskip 
\noindent{\bf Acknowledgements:} 
Li and   Ni were supported by NSFC (Grant No. 12331007).
And Li was also supported by the ``333 Project" of Jiangsu Province. Zhang  was supported by NSFC (Grant No. 12471215) and Taishan Scholars Program (tsqn202507101).

\vspace{2mm}

\noindent\textbf{Conflict of interest.} The authors do not have any possible conflicts of interest.

\vspace{2mm}

\noindent\textbf{Data availability statement.}
 Data sharing is not applicable to this article as no data sets were generated or analyzed during the current study.

\bibliographystyle{plain}

\end{document}